\newtheorem{theorem}{Theorem}[section]
\newtheorem{lemma}[theorem]{Lemma}
\newtheorem{proposition}[theorem]{Proposition}
\newtheorem{corollary}[theorem]{Corollary}
\theoremstyle{plain}
\theoremstyle{definition}
\newtheorem{definition}[theorem]{Definition}
\numberwithin{equation}{section}
\renewcommand{\labelenumi}{\textup{(\theenumi)}}
\renewcommand{\phi}{\varphi}
\newcommand{\Homeo}{\operatorname{Homeo}}
\newcommand{\id}{\operatorname{id}}
\newcommand{\Ker}{\operatorname{Ker}}
\newcommand{\Ad}{\operatorname{Ad}}
\newcommand{\N}{\mathbb{N}}
\newcommand{\T}{\mathbb{T}}
\newcommand{\Z}{\mathbb{Z}}
\newcommand{\Zp}{{\mathbb{Z}}_+}
\title{Strongly continuous orbit equivalence of \\
one-sided topological Markov shifts}
\author{Kengo Matsumoto \\
Department of Mathematics \\
Joetsu University of Education \\
Joetsu, 943-8512, Japan
}
\date{}
\begin{document}
\maketitle

\def\det{{{\operatorname{det}}}}

%\maketitle
\begin{abstract}
We will introduce a notion of strongly continuous orbit equivalence 
in one-sided topological Markov shifts.
Strongly continuous orbit equivalence
yields
a topological conjugacy 
between their two-sided topological
Markov shifts
$(\bar{X}_A, \bar{\sigma}_A)$
and 
$(\bar{X}_B, \bar{\sigma}_B)$.
We prove that 
one-sided topological Markov shifts
$(X_A, \sigma_A)$
and 
$(X_B, \sigma_B)$
are strongly continuous orbit equivalent
if and only if 
there exists an isomorphism bewteen 
the Cuntz-Krieger algebras
${\mathcal{O}}_A$ and 
${\mathcal{O}}_B$ 
preserving their maximal commutative $C^*$-subalgebras
$C(X_A)$ and $C(X_B)$
and giving  cocycle conjugate gauge actions.
An example of one-sided topological
Markov shifts which are
strongly continuous orbit equivalent 
but not one-sided topologically conjugate
is presented.
\end{abstract}

%2010{\it Mathematics Subject Classification}:
% Primary 46L55; Secondary 46L35, 37B10.

%{\it Keywords and phrases}:
%Topological Markov shifts, orbit equivalence,  Cuntz--Krieger algebras

%%%%%%%%%%%%%%%%%%%%%%%%%%%%%%%%%%%%%%%%%%%%%%%%%%%%   

\def\OA{{{\mathcal{O}}_A}}
\def\OB{{{\mathcal{O}}_B}}
\def\FA{{{\mathcal{F}}_A}}
\def\FB{{{\mathcal{F}}_B}}
\def\DA{{{\mathcal{D}}_A}}
\def\DB{{{\mathcal{D}}_B}}
\def\HA{{{\frak H}_A}}
\def\HB{{{\frak H}_B}}
\def\Ext{{{\operatorname{Ext}}}}
\def\Max{{{\operatorname{Max}}}}
\def\Per{{{\operatorname{Per}}}}
\def\PerB{{{\operatorname{PerB}}}}
\def\Homeo{{{\operatorname{Homeo}}}}
\def\HSA{{H_{\sigma_A}(X_A)}}
\def\Out{{{\operatorname{Out}}}}
\def\Aut{{{\operatorname{Aut}}}}
\def\Ad{{{\operatorname{Ad}}}}
\def\Inn{{{\operatorname{Inn}}}}
\def\det{{{\operatorname{det}}}}
\def\exp{{{\operatorname{exp}}}}
\def\cobdy{{{\operatorname{cobdy}}}}
\def\Ker{{{\operatorname{Ker}}}}
\def\ind{{{\operatorname{ind}}}}
\def\id{{{\operatorname{id}}}}
\def\supp{{{\operatorname{supp}}}}
\def\COE{{{\operatorname{COE}}}}
\def\SCOE{{{\operatorname{SCOE}}}}

%%%%%%%%%%%%%%%%%%%%%%%%%%%%%%%%%%%%%%%

\section{Introduction}

Let $A=[A(i,j)]_{i,j=1}^N$ 
be an $N\times N$ matrix with entries in $\{0,1\}$,
where $1< N \in {\Bbb N}$.
Throughout the paper, 
we assume that 
$A$ is irreducible and satisfies condition (I) in the sense of Cuntz--Krieger \cite{CK}.
We denote by 
$X_A$ the shift space 
\begin{equation}
X_A = \{ (x_n )_{n \in \N} \in \{1,\dots,N \}^{\N}
\mid
A(x_n,x_{n+1}) =1 \text{ for all } n \in {\N}
\} \label{eq:Markovshift}
\end{equation}
%over $\{1,\dots,N\}$
of the right one-sided topological Markov shift for $A$.
It is a compact Hausdorff space in natural  product topology
on $\{1,\dots,N\}^{\Bbb N}$.
The shift transformation $\sigma_A$ on $X_A$ defined by 
$\sigma_{A}((x_n)_{n \in \Bbb N})=(x_{n+1} )_{n \in \Bbb N}$
is a continuous surjective map on $X_A$.
The topological dynamical system 
$(X_A, \sigma_A)$ is called the (right) one-sided topological Markov shift for $A$.
The two-sided topological Markov shift
written $(\bar{X}_A, \bar{\sigma}_A)$
is similarly defined by gathering two-sided
sequences 
$(x_n)_{n \in \Z}$ 
instead of one-sided sequences 
$(x_n )_{n \in \N}$ in \eqref{eq:Markovshift}.   
In \cite{CK}, J. Cuntz and W. Krieger have introduced a $C^*$-algebra  from topological Markov shift $(X_A,\sigma_A)$.
It is called the Cuntz--Krieger algebra written $\OA$.
They have proved that if one-sided topological Markov shifts $(X_A,\sigma_A)$ and   
$(X_B,\sigma_B)$ are topologically conjugate,
then the Cuntz--Krieger algebras 
$\OA$ and $\OB$ with their gauge actions are conjugate. 
They have also proved that 
if two-sided topological Markov shifts 
$(\bar{X}_A,\bar{\sigma}_A)$ and   
$(\bar{X}_B,\bar{\sigma}_B)$ 
are topologically conjugate,
then the stabilized Cuntz--Krieger algebras 
$\OA\otimes{\mathcal{K}}(H)$ and $\OB\otimes{\mathcal{K}}(H)$ 
with their stabilized gauge actions are conjugate.
We note that one-sided topological conjugacy of topological Markov shifts
yields two-sided topological conjugacy. 
The author in \cite{MaPacific}
has introduced 
the notion of continuous orbit equivalence of one-sided topological Markov shifts.
It is a dynamical equivalence relation in one-sided topological Markov shifts
inspired by studies of orbit equivalences in Cantor minimal systems by 
 Giordano--Putnam--Skau (cf. \cite{GPS}, \cite{GPS2}), 
 Giordano--Matui--Putnam--Skau (cf. \cite{GMPS}).
 It is a weaker than one-sided topological conjugacy
 and
 gives rise to isomorphic Cuntz--Krieger algebras (\cite{MaPacific}).
Let $A$ and $B$ be irreducible square matrices with entries in $\{0,1 \}$.
One-sided topological Markov shifts
$(X_A, \sigma_A)$ and $(X_B,\sigma_B)$ 
are said to be continuously orbit equivalent 
if there exists a homeomorphism
$h: X_A \rightarrow X_B$ 
such that
\begin{align}
\sigma_B^{k_1(x)} (h(\sigma_A(x))) 
& = \sigma_B^{l_1(x)}(h(x))
\quad 
\text{ for} \quad 
x \in X_A,  \label{eq:orbiteq1x} \\
\sigma_A^{k_2(y)} (h^{-1}(\sigma_B(y))) 
& = \sigma_A^{l_2(y)}(h^{-1}(y))
\quad 
\text{ for } \quad 
y \in X_B \label{eq:orbiteq2y}
\end{align}
for some continuous functions 
$k_1,l_1 \in C(X_A, \Zp),\, 
 k_2,l_2 \in C(X_B, \Zp). 
$
Let
$G_A$ denote the \'etale groupoid
for $(X_A,\sigma_A)$ 
whose reduced groupoid $C^*$-algebra 
$C^*_{r}(G_A)$ is isomorphic to the Cuntz--Krieger algebra 
 $\mathcal{O}_A$
(cf. \cite{MatuiPLMS}, \cite{MatuiPre2012}, \cite{Renault}).
Denote by $\DA$ the canonical maximal abelian $C^*$-subalgebra 
of $\OA$ realized as the commutative $C^*$-algebra of continuous functions on the unit space
$G^{(0)}_A$ of $G_A$.
The algebra $\DA$ is canonically isomorphic to the $C^*$-algebra
$C(X_A)$ of continuous functions on the shift space $X_A$.
H. Matui has studied  continuous orbit equivalence from the view point of groupoids
(\cite{MatuiPLMS}, \cite{MatuiPre2012}).

In \cite{MMKyoto},
we have obtained the following classification results 
of continuous orbit equivalence of one-sided topological Markov shifts.
\begin{theorem}[\cite{MMKyoto}, cf. \cite{MaPacific}, \cite{MaPAMS}, 
\cite{MatuiPLMS}, \cite{MatuiPre2012}]
The following four assertions are equivalent:
\begin{enumerate}
\renewcommand{\theenumi}{\roman{enumi}}
\renewcommand{\labelenumi}{\textup{(\theenumi)}}
\item
 $(X_A, \sigma_A)$ and $(X_B,\sigma_B)$ 
are continuously orbit equivalent.
\item The \'etale groupoids $G_A$ and $G_B$ are isomorphic. 
%\item The continuous full groups $\Gamma_A$ and $\Gamma_B$ are isomorphic as abstract groups.
\item There exists an isomorphism $\Psi:\mathcal{O}_A\to\mathcal{O}_B$ 
such that $\Psi(\mathcal{D}_A)=\mathcal{D}_B$. 
\item $\mathcal{O}_A$ and $\mathcal{O}_B$ 
are isomorphic and $\det(\id-A)=\det(\id-B)$. 
%\item There exists an isomorphism $\Phi:\BF(A^t)\to\BF(B^t)$ 
%such that $\Phi(u_A)=u_B$ and $\det(\id-A)=\det(\id-B)$. 
\end{enumerate}
\end{theorem}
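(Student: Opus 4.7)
The plan is to prove the cycle (i) $\Rightarrow$ (ii) $\Rightarrow$ (iii) $\Rightarrow$ (i), establishing the mutual equivalence of the first three assertions, and then to prove (iii) $\Leftrightarrow$ (iv) separately. The first three implications merely translate the same piece of data between the dynamical, groupoid, and $C^*$-algebraic frameworks, whereas the equivalence with (iv) carries the substantive classification content.

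For (i) $\Rightarrow$ (ii), starting from a continuous orbit equivalence $h : X_A \to X_B$ with cocycles $k_1, l_1, k_2, l_2$, I would construct an isomorphism $\Phi : G_A \to G_B$ directly. A typical morphism of $G_A$ has the form $(x, p-q, y)$ with $\sigma_A^p(x) = \sigma_A^q(y)$; iterating \eqref{eq:orbiteq1x} along $x$ and along $y$ produces integers $m, n \in \Zp$ that are cocycle sums of $k_1$ and $l_1$ with $\sigma_B^m(h(x)) = \sigma_B^n(h(y))$, and one sets $\Phi(x, p-q, y) := (h(x), m-n, h(y))$. Multiplicativity is forced by the cocycle identities, bijectivity uses \eqref{eq:orbiteq2y}, and continuity descends from that of $h, k_1, l_1$. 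For (ii) $\Rightarrow$ (iii), the standard reduced groupoid $C^*$-algebra construction (cf.\ \cite{Renault}) turns a topological isomorphism of étale groupoids into a $*$-isomorphism $\Psi : \OA \to \OB$ carrying $C_0(G_A^{(0)}) = \DA$ onto $\DB$. For (iii) $\Rightarrow$ (i), the restriction $\Psi|_{\DA}$ dualizes by Gelfand theory to a homeomorphism $h : X_A \to X_B$; to produce the cocycles, note that the shift $\sigma_A$ is encoded inside $\OA$ through the Cuntz--Krieger partial isometries $S_1, \dots, S_N$, and transporting the resulting identities by $\Psi$ and evaluating on characteristic functions of cylinder sets yields, after a compactness argument, functions $k_1, l_1 \in C(X_A, \Zp)$ and $k_2, l_2 \in C(X_B, \Zp)$ satisfying \eqref{eq:orbiteq1x} and \eqref{eq:orbiteq2y}.

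For the equivalence (iii) $\Leftrightarrow$ (iv), the direction (iii) $\Rightarrow$ (iv) is the more substantive half of the first implication: while $\OA \cong \OB$ is tautological, the equality $\det(\id - A) = \det(\id - B)$ is a delicate invariant of continuous orbit equivalence, established by Matui \cite{MatuiPre2012} via the first homology of the transformation groupoid, which encodes sign data invisible to $K$-theory alone. The converse (iv) $\Rightarrow$ (iii) is the main obstacle of the entire theorem: one starts with an abstract isomorphism $\OA \cong \OB$ granted by R{\o}rdam's classification of Cuntz--Krieger algebras (which sees only $K_0$ and the class of the unit) and must upgrade it to a diagonal-preserving isomorphism. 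The determinant hypothesis kills precisely the sign ambiguity in the identification of the Bowen--Franks groups and, combined with rigidity results for the canonical masas inside Cuntz--Krieger algebras, allows the abstract isomorphism to be realized at the groupoid level. The bulk of the technical work of the theorem is concentrated in this last implication and is the content of \cite{MMKyoto}.
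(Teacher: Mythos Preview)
This theorem is not proved in the present paper at all: it is quoted in the introduction as a result from the literature (primarily \cite{MMKyoto}, with partial contributions from \cite{MaPacific}, \cite{MaPAMS}, \cite{MatuiPLMS}, \cite{MatuiPre2012}), and the paper offers no argument of its own. There is therefore nothing in the paper to compare your proposal against.

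That said, as a sketch of how the cited references actually establish the result, your outline is broadly accurate. The cycle (i) $\Leftrightarrow$ (ii) $\Leftrightarrow$ (iii) is indeed the ``soft'' part, assembled from \cite{MaPacific} and Renault-type reconstruction (with the key step (iii) $\Rightarrow$ (i) already in \cite{MaPacific}), and you are right that the substantive classification content lies in (iv) $\Rightarrow$ (iii). Two minor corrections to your attributions: the invariance of $\det(\id-A)$ under continuous orbit equivalence is most directly obtained in \cite{MMKyoto} via the identification of $H_0(G_A)$ with $\Z^N/(\id-A^t)\Z^N$ together with its sign data (building on \cite{MatuiPLMS}), rather than first homology; and the hard implication (iv) $\Rightarrow$ (iii) in \cite{MMKyoto} does not proceed by ``upgrading'' an abstract R{\o}rdam isomorphism to a diagonal-preserving one, but rather by invoking the Franks--Huang classification of flow equivalence (the hypothesis $\OA\cong\OB$ plus $\det(\id-A)=\det(\id-B)$ forces the Bowen--Franks groups to agree with the correct sign, hence flow equivalence, hence groupoid isomorphism). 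Your description of ``rigidity results for the canonical masas'' is not quite the mechanism used.
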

Let $A$ be an irreducible square matrix with entries in $\{0,1\}$.
The ordered cohomology group $(\bar{H}^A,\bar{H}^A_+)$
is defined by the quotient group of the ordered abelian group
$C(\bar{X}_A,{\mathbb{Z}})$ of all ${\mathbb{Z}}$-valued continuous functions 
on $\bar{X}_A$ quoted by the subgroup
$\{ \xi - \xi \circ \bar{\sigma}_A \mid \xi \in C(\bar{X}_A,{\mathbb{Z}}) \}$.
The positive cone $\bar{H}^A_+$ consists of the classes of nonnegative functions in $C(\bar{X}_A,{\mathbb{Z}})$ 
(cf. \cite{BH}, \cite{Po}).
We similarly define the ordered cohomology group
$(H^A,H^A_+)$
for one-sided topological Markov shift $(X_A,\sigma_A)$.
The latter ordered group  
$(H^A,H^A_+)$ is naturally isomorphic to the former one  
$(\bar{H}^A,\bar{H}^A_+)$ (\cite[Lemma 3.1]{MMKyoto}).
The ordered group
$(H^A,H^A_+)$ 
is also isomorphic to the first cohomology group
$H^1(G_A,\Z)$ 
of the groupoid
$G_A$ (\cite[Proposition 3.4]{MMKyoto}).
In \cite{BH}, Boyle--Handelman have proved that 
the ordered cohomology group
$(\bar{H}^A,\bar{H}^A_+)$ 
is a complete invariant for  flow equivalence  of 
two-sided topological Markov shift
$(\bar{X}_A, \bar{\sigma}_A)$.

In the first part of this paper, 
we will introduce a notion of continuous orbit map
from  
$(X_A, \sigma_A)$ to $(X_B,\sigma_B)$.
A local homeomorphism 
$h:X_A \rightarrow X_B$ 
is said to be {\it continuous orbit map\/}
if
there exist continuous functions
$k_1,\, l_1:X_A \rightarrow \Zp$
such that
\begin{equation}
\sigma_B^{k_1(x)} (h(\sigma_A(x))) = \sigma_B^{l_1(x)}(h(x))
\quad \text{ for}
\quad x \in X_A. \label{eq:orbiteqx}
\end{equation}
%If a continuous orbit map is a homeomorphism,
%it is called a {\it continious orbit homeomorphism\/}.
It yields a morphism in the continuous orbit equivalence classes 
of one-sided topological Markov shifts.
For $f \in C(X_B,\Z )$, define
\begin{equation}
\Psi_{h}(f)(x)
= \sum_{i=0}^{l_1(x)-1} f(\sigma_B^i(h(x))) 
- \sum_{j=0}^{k_1(x)-1} f(\sigma_B^j(h(\sigma_A(x))))
 \quad 
\text{ for }
\quad x \in X_A. \label{eq:Psihfx}
\end{equation}
It is easy to see that 
$\Psi_h(f) \in C(X_A,\Z)$.
Thus $\Psi_h: C(X_B,\Z) \rightarrow C(X_A,\Z)$
gives rise to a homomorphism of abelian groups
and induces a homomorphism
from $H^B$ to $H^A$.
We will then show that 
the objects of  continuous orbit equivalence classes
of one-sided topological Markov shifts
with the morphisms of continuous orbit maps
form a category (Proposition \ref{prop:COEcategory}).  
We have
\begin{theorem}[{Theorem \ref{thm:contravariant}}]
The correspondence $\Psi$ yields a contravariant functor from 
the category of continuous orbit 
equivalence classes $[(X_A,\sigma_A)]$
of one-sided topological Markov shifts to 
that of ordered abelian groups $(H^A, H^A_+)$.
\end{theorem}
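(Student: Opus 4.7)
The plan is to verify the three conditions that define a contravariant functor into the category of ordered abelian groups: (a) each continuous orbit map $h\colon X_A\to X_B$ induces a group homomorphism $\Psi_h\colon H^B\to H^A$ that carries the positive cone $H^B_+$ into $H^A_+$; (b) the identity orbit map on $X_A$ induces the identity on $H^A$; and (c) for composable continuous orbit maps $h_1\colon X_A\to X_B$ and $h_2\colon X_B\to X_C$ one has $\Psi_{h_2\circ h_1}=\Psi_{h_1}\circ\Psi_{h_2}$ on cohomology.

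That $\Psi_h$ descends to a group map on cohomology is essentially done in the excerpt: for a coboundary $f=\xi-\xi\circ\sigma_B$, both sums in \eqref{eq:Psihfx} telescope, and the orbit identity \eqref{eq:orbiteqx} collapses the result to $(\xi\circ h)-(\xi\circ h)\circ\sigma_A$, a coboundary in $C(X_A,\Z)$. For positivity, take $f\ge0$; the objective is to exhibit some $\eta\in C(X_A,\Z)$ making $\Psi_h(f)+\eta-\eta\circ\sigma_A$ pointwise nonnegative. Using the common endpoint $\sigma_B^{l_1(x)}(h(x))=\sigma_B^{k_1(x)}(h(\sigma_A(x)))$ supplied by \eqref{eq:orbiteqx}, one can pad the two Birkhoff sums in \eqref{eq:Psihfx} to a uniform length (the continuous functions $k_1,l_1$ are bounded on the compact $X_A$) by appending equal tails starting at this common point, and then absorb the subtracted sum into a coboundary on $X_A$. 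Equivalently, one may transport the problem via the identification $H^A\cong H^1(G_A,\Z)$ to groupoid cohomology, where pullback by the groupoid morphism associated with $h$ manifestly sends nonnegative one-cocycles to nonnegative one-cocycles.

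For the identity, take $k_1\equiv0$ and $l_1\equiv1$, so that \eqref{eq:Psihfx} reduces to $\Psi_{\id}(f)(x)=f(x)$ at the cochain level. For composition, the plan is first to introduce iterated cocycles for $h_2$: set $k_2^{(0)}=l_2^{(0)}=0$ and recursively
\begin{equation*}
k_2^{(n+1)}(y)=k_2(y)+k_2^{(n)}(\sigma_B(y)),\qquad l_2^{(n+1)}(y)=l_2^{(n)}(\sigma_B(y))+l_2(y),
\end{equation*}
so that $\sigma_C^{k_2^{(n)}(y)}(h_2(\sigma_B^n(y)))=\sigma_C^{l_2^{(n)}(y)}(h_2(y))$ for all $y\in X_B$ and $n\ge0$. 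Applying this with $(n,y)=(l_1(x),h_1(x))$ and with $(n,y)=(k_1(x),h_1(\sigma_A(x)))$, and using the orbit identity for $h_1$, one sees that $h_2\circ h_1$ is a continuous orbit map with cocycle
\begin{equation*}
L(x)=k_2^{(k_1(x))}(h_1(\sigma_A(x)))+l_2^{(l_1(x))}(h_1(x)),\quad K(x)=l_2^{(k_1(x))}(h_1(\sigma_A(x)))+k_2^{(l_1(x))}(h_1(x)).
\end{equation*}
Inserting these cocycles into \eqref{eq:Psihfx} for $\Psi_{h_2\circ h_1}(f)$, expanding $\Psi_{h_1}(\Psi_{h_2}(f))$ by nested application of \eqref{eq:Psihfx}, and reindexing the nested Birkhoff sums term by term against the iterated cocycles, the two expressions coincide up to a coboundary in $C(X_A,\Z)$.

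The main obstacle is the positivity step: the raw formula \eqref{eq:Psihfx} is a signed expression, and producing an explicit cohomologous nonnegative representative requires careful combinatorial bookkeeping along the common-endpoint structure dictated by \eqref{eq:orbiteqx}. The contravariance identity, though tedious to unfold, is formal telescoping once the iterated $h_2$-cocycles are set up, and the identity axiom is immediate.
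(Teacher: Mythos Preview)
Your treatment of the coboundary, identity, and composition axioms is essentially in line with the paper: the paper proves $\Psi_h\circ\Psi_g=\Psi_{gh}$ exactly at the cochain level (not merely up to coboundary), but your weaker claim suffices for functoriality, and the telescoping argument for coboundaries is the same.

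The genuine gap is in the positivity step. Your ``padding'' argument does not work: after making $k_1(x)+M(x)\equiv K$ constant and setting $\eta(x)=\sum_{j=0}^{K-1}f(\sigma_B^j(h(x)))$, one finds
\[
\Psi_h(f)(x)-\bigl(\eta(x)-\eta(\sigma_A(x))\bigr)
=\sum_{i=K}^{c_1(x)+K-1} f(\sigma_B^i(h(x)))
\]
when $c_1(x)=l_1(x)-k_1(x)\ge 0$, but the expression is \emph{negative} (for $f\ge 0$) wherever $c_1(x)<0$, and nothing in the definition of a continuous orbit map prevents $c_1$ from taking negative values. Your alternative via groupoid cohomology is also not a proof: a continuous orbit map does not obviously induce a groupoid morphism, and even granting one, the positive cone $H^A_+$ is defined via nonnegative representatives in $C(X_A,\Z)$, not via pointwise nonnegativity of cocycles on $G_A$, so ``manifest'' preservation fails. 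The paper's route is entirely different and dynamical: it invokes the characterization $[f]\in H^A_+\iff \omega_f^{r,s}(x)\ge 0$ for every $(r,s)$-attracting eventually periodic point $x$ (with $r>s$), proves that $h$ carries $(r,s)$-attracting points to $(l_1^r(x)+k_1^s(x),\,k_1^r(x)+l_1^s(x))$-attracting points (forcing the first index to exceed the second), and then checks the identity $\omega_{\Psi_h(f)}^{r,s}(x)=\omega_f^{r',s'}(z)$ for suitable $z\in X_B$. This is the missing idea; without it, positivity of $\bar\Psi_h$ is unproved.
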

The class $[1_A]\in H^A$
of the constant function $1_A(x) =1, x \in X_A$
is an order unit of the ordered group $(H^A, H^A_+)$.
Let $h:X_A \rightarrow X_B$ be a continuous orbit map
giving rise to a 
continuous orbit equivalence 
between 
$(X_A,\sigma_A)$ and $(X_B,\sigma_B)$.
It is a key ingredient 
in the papers \cite{MMKyoto}, \cite{MMPre2014}
that the class
$[\Psi_h(1_B)]$ in $H^A$ of 
$\Psi_h(1_B)\in C(X_A,\Z)$   
belongs to the positive cone $H^A_+$. 
In the second part of this paper,
we will  introduce a notion of strongly continuous orbit equivalence
between 
$(X_A,\sigma_A)$ and $(X_B,\sigma_B)$,
which is defiend by the condition
that $[\Psi_h(1_B)] =[1_A]$ in $H^A$.
It has been proved in \cite{MMPre2014} that 
under the condtion 
that $[\Psi_h(1_B)] =[1_A]$ in $H^A$,
their zeta functions coincide, that is,
$\det(\id -tA) = \det(\id - t B)$.
Hence strongly continuous orbit equivalence
preserves the structure of  periodic points
of their two-sided topological Markov shifts.
We will know  that
strongly  continuous orbit equivalence 
in one-sided topological Markov shifts is a subequivalence relation
in continuous orbit equivalence,
so that 
the objects of strongly  continuous orbit equivalence classes
of one-sided topological Markov shifts
with the morphisms 
 of strongly continuous orbit maps
form a category
(Proposition \ref{prop:SCOEcategory}).
Continuous orbit equivalence of one-sided topological Markov shifts 
does not necessarily give rise to topological conjugacy of their  
two-sided topological Markov shifts.
We however see the following theorem:
\begin{theorem}[{Theorem \ref{thm:twoconjugacy} and {Corollary \ref{cor:crossedXA}}}]
Suppose that
$(X_A, \sigma_A)$ and $(X_B,\sigma_B)$
are strongly  continuous orbit equivalent.
Then their two-sided topological Markov shifts
$(\bar{X}_A, \bar{\sigma}_A)$ and 
$(\bar{X}_B, \bar{\sigma}_B)$
are topologically conjugate.
Hence
their $C^*$-crossed products are isomorphic:
\begin{equation*}
C(\bar{X}_A) \times_{\bar{\sigma}_A^*} \Z 
\cong
C(\bar{X}_B) \times_{\bar{\sigma}_B^*} \Z. 
\end{equation*}
 \end{theorem}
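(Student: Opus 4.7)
The plan is to build an explicit topological conjugacy $\bar h : \bar X_A \to \bar X_B$ using the inverse-limit description $\bar X_A \cong \varprojlim (X_A,\sigma_A)$. A point of $\bar X_A$ is a coherent sequence $(x^{(n)})_{n\in\N}$ in $X_A$ with $\sigma_A(x^{(n+1)}) = x^{(n)}$, and $\bar\sigma_A$ acts by applying $\sigma_A$ in each coordinate. Unpacking the hypothesis, a strong continuous orbit equivalence gives a homeomorphism $h : X_A\to X_B$ with cocycles $k_1,l_1\in C(X_A,\Zp)$ as in \eqref{eq:orbiteq1x}; since $\Psi_h(1_B)(x) = l_1(x) - k_1(x)$, the assumption $[\Psi_h(1_B)]=[1_A]$ in $H^A$ furnishes $b\in C(X_A,\Z)$ such that
\[
l_1(x) - k_1(x) = 1 + b(x) - b(\sigma_A(x)) \qquad (x\in X_A).
\]
By compactness I can then fix $M\in \N$ with $M + b \geq 0$ and $M + b\circ\sigma_A \geq k_1$ pointwise on $X_A$, and put
\[
\bar h\bigl((x^{(n)})_n\bigr) := \bigl(\sigma_B^{M + b(x^{(n)})}(h(x^{(n)}))\bigr)_n.
\]

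The decisive calculation combines \eqref{eq:orbiteq1x} with the coboundary identity for $b$: for every $w\in X_A$,
\[
\sigma_B^{M + b(\sigma_A(w))}(h(\sigma_A(w))) = \sigma_B^{M + b(\sigma_A(w)) - k_1(w) + l_1(w)}(h(w)) = \sigma_B^{M + b(w)+1}(h(w)).
\]
Taking $w = x^{(n+1)}$ yields the coherence relation $\sigma_B(\bar h(\bar x)^{(n+1)}) = \bar h(\bar x)^{(n)}$, so that $\bar h(\bar x) \in \bar X_B$; taking $w = x^{(n)}$ yields $\bar h\circ\bar\sigma_A = \bar\sigma_B\circ\bar h$. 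Continuity is immediate from the coordinate formula. The symmetry of strong continuous orbit equivalence (applied to $h^{-1}$) produces $\tilde b\in C(X_B,\Z)$ satisfying the analogous coboundary equation with respect to the cocycles $k_2,l_2$ of \eqref{eq:orbiteq2y}, and the mirror construction yields a continuous shift-equivariant $\bar g : \bar X_B \to \bar X_A$. Tracing the definitions coordinate-wise and iterating \eqref{eq:orbiteq2y} through the formula for $\bar h$, both $\bar g\circ\bar h$ and $\bar h\circ\bar g$ reduce to fixed powers of the respective shifts; post-composing $\bar h$ with the appropriate inverse power of $\bar\sigma_B$ then produces the sought topological conjugacy between $(\bar X_A,\bar\sigma_A)$ and $(\bar X_B,\bar\sigma_B)$. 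The crossed-product isomorphism $C(\bar X_A)\times_{\bar\sigma_A^*}\Z \cong C(\bar X_B)\times_{\bar\sigma_B^*}\Z$ is then automatic, since a topological conjugacy of $\Z$-actions on compact Hausdorff spaces induces an equivariant isomorphism of the algebras of continuous functions and hence of the crossed products.

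The main technical obstacle is the second step: verifying from the one-sided cocycle identities alone that the two compositions $\bar g\circ\bar h$ and $\bar h\circ\bar g$ are genuine powers of the shift rather than more complicated shift-equivalences with non-constant offsets. This forces one to iterate the cocycles $k_i,l_i$ along finite prefixes and to combine the two coboundary identities for $b$ and $\tilde b$ on a common site, reducing to the kind of computation exploited in \cite{MMKyoto} and \cite{MMPre2014}. Once that bookkeeping is carried out, the remainder of the argument is formal.
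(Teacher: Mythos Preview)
Your construction is essentially the paper's: the map $x\mapsto\sigma_B^{M+b(x)}(h(x))$ is exactly the paper's $\varphi_{b_1}(x)=\sigma_B^{b_1(x)}(h(x))$ (the paper simply absorbs your constant $M$ into $b_1$), and the paper builds $\bar h$ from $\varphi_{b_1}$ and $\bar g$ from the analogous $\psi_{b_2}(y)=\sigma_A^{b_2(y)}(h^{-1}(y))$ just as you do. The intertwining $\varphi_{b_1}\circ\sigma_A=\sigma_B\circ\varphi_{b_1}$ is your ``decisive calculation'' verbatim.

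Where you leave a gap is precisely the point you flag as the main obstacle: showing that $\bar g\circ\bar h$ is a \emph{constant} power of $\bar\sigma_A$ rather than something with a position-dependent offset. This is not mere bookkeeping with the iterated cocycles. The paper's key step (Lemma~\ref{lem:constant}) is to prove that $b_1(x)+b_2(h(x))$ is independent of $x$. The argument runs: the identity $c_2^{l_1(x)}(h(x))=c_2^{k_1(x)}(h(\sigma_A(x)))+1$ (a consequence of the cocycle relation \cite[Lemma~3.3]{MMPre2014} between $h$ and $h^{-1}$) combined with the coboundary equations for $b_1$ and $b_2$ forces $b_1(x)+b_2(h(x))$ to be $\sigma_A$-invariant; irreducibility of $A$ then makes it constant, say equal to $N_h$. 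One then reads off $\psi_{b_2}\circ\varphi_{b_1}=\sigma_A^{N_h}$ and $\varphi_{b_1}\circ\psi_{b_2}=\sigma_B^{N_h}$ directly, and $\bar h$ is a conjugacy. Without the $\sigma_A$-invariance step and the appeal to irreducibility, your sketch does not explain why the offset is constant; you should make that explicit.
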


Let us denote by
$\rho^A$ the gauge action on $\OA$.
In general, continuous orbit equivalence
does not necessarily yield 
the cocycle conjugacy of the gauge actions on the Cuntz--Krieger algebras.
We have the following result which is a generalization of \cite[2.17 Proposition]{CK}.

\begin{theorem}[{Theorem \ref{thm:TFAE}}]
The following two assertions are equivalent.
\begin{enumerate}
\renewcommand{\theenumi}{\roman{enumi}}
\renewcommand{\labelenumi}{\textup{(\theenumi)}}
\item
One-sided topological Markov shifts
$(X_A, \sigma_A)$ and $(X_B,\sigma_B)$
are strongly continuous orbit equivalent.
\item
There exist
a unitary one-cocycle $v_t \in U(\OB), t \in \T$
for the gauge action $\rho^B$ on $\OB$
and
 an isomorphism
$\Phi:\OA \rightarrow \OB$ such that 
\begin{equation*}
\Phi(\DA) = \DB
\quad 
\text{  and }
\quad
\Phi \circ \rho^A_t = \Ad(v_t)\circ \rho^B_t \circ \Phi, 
\qquad t \in {\mathbb{T}}.
\end{equation*}
\end{enumerate}
\end{theorem}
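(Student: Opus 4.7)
The strategy is to pass to the groupoid picture via $\OA = C^*_{r}(G_A)$, $\DA = C(X_A)$, where the gauge action $\rho^A$ is the action $\rho^{c_A}$ of $\T$ induced by the canonical length cocycle $c_A \in Z^1(G_A,\Z)$, $(x, n-m, y) \mapsto n-m$, and similarly for $B$. Theorem 1.1 identifies isomorphisms $\Phi: \OA \to \OB$ with $\Phi(\DA) = \DB$ with groupoid isomorphisms $\phi: G_A \to G_B$, and $h := \phi|_{G_A^{(0)}}: X_A \to X_B$ is the homeomorphism underlying the associated COE. The key dictionary is as follows: a $\T$-action on $\OB$ of the form $\rho^c$ with $c \in Z^1(G_B,\Z)$ is exterior equivalent to $\rho^B = \rho^{c_B}$ via a unitary one-cocycle in $U(\DB)$ if and only if $c - c_B = \partial b$ for some $b \in C(X_B,\Z)$, in which case the implementing cocycle is $v_t := \exp(2\pi i t b)$ and the identity $v_{t+s} = v_t \rho^B_t(v_s)$ collapses to $v_{t+s} = v_t v_s$ since $\rho^B_t$ is trivial on $\DB$. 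Finally, the isomorphism $H^1(G_A,\Z) \cong H^A$ of \cite[Proposition 3.4]{MMKyoto} sends $[c_A]$ to $[1_A]$ and $[\phi^* c_B]$ to $[\Psi_h(1_B)]$; the latter because on a one-step generator $(x,1,\sigma_A(x)) \in G_A$, \eqref{eq:orbiteqx} gives $\phi(x,1,\sigma_A(x)) = (h(x), l_1(x)-k_1(x), h(\sigma_A(x)))$, with $c_B$-value $l_1(x) - k_1(x) = \Psi_h(1_B)(x)$.

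Granted this dictionary, both implications are direct. For (i)$\Rightarrow$(ii), SCOE provides $\xi \in C(X_A,\Z)$ with $\Psi_h(1_B) - 1_A = \xi - \xi \circ \sigma_A$; taking $\Phi, \phi$ from Theorem 1.1, this translates to $(\phi^{-1})^* c_A - c_B = \partial b$ on $G_B$ for an appropriate $b \in C(X_B,\Z)$, and the required $\Phi$ together with $v_t := \exp(2\pi i t b) \in U(\DB)$ satisfies the intertwining $\Phi \circ \rho^A_t = \Ad(v_t) \circ \rho^B_t \circ \Phi$ by the dictionary. For (ii)$\Rightarrow$(i), COE is immediate from Theorem 1.1, so it suffices to recover the coboundary. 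Applying the intertwining to $d \in \DA$ gives $\Phi(d) = v_t \Phi(d) v_t^*$, so $v_t$ commutes with $\DB = \Phi(\DA)$ and hence lies in $\DB$ by maximal abelianness. Treating $v_t \in C(X_B,\T)$, the relations $v_0 = 1$ and $v_{t+s} = v_t v_s$ (valid since $\rho^B_t|_{\DB} = \id$) give, by pointwise continuous-logarithm lifting, $v_t = \exp(2\pi i t \tilde b)$ for a unique $\tilde b \in C(X_B,\Z)$. Running the dictionary in reverse turns the intertwining identity into $c_A - \phi^* c_B = \partial(\tilde b \circ h)$ in $Z^1(G_A,\Z)$, which under $H^1(G_A,\Z) \cong H^A$ reads $[1_A] = [\Psi_h(1_B)]$.

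The main obstacle is not the logical skeleton but the bookkeeping in the dictionary itself: one must trace carefully through the description of the groupoid isomorphism $\phi$ induced by a COE, check that the cohomology class of $\phi^* c_B$ really does match $[\Psi_h(1_B)]$ and not just agree on one-step generators, and formulate precisely the correspondence between $\Z$-valued coboundaries on the groupoid and unitary one-cocycles in the diagonal for the gauge action. These ingredients are essentially contained in \cite{MMKyoto} and \cite{MatuiPLMS}; the present theorem is then the natural gauge-equivariant refinement of the third equivalent condition in Theorem 1.1.
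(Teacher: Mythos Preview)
Your proposal is correct but takes a genuinely different route from the paper. The paper works concretely on the Hilbert space with orthonormal basis $\{e_x^A\}_{x\in X_A}$: it sets $\Phi=\Ad(U_h)$ for the unitary $U_h e_x^A=e_{h(x)}^B$ and verifies the intertwining $\Phi\circ\rho^A_t=\Ad(u_t^{b_2})\circ\rho^B_t\circ\Phi$ by direct computation on each generating partial isometry applied to basis vectors, the key algebraic input being the identity $b_1(z)-b_1(\sigma_A(z))=b_2(h(\sigma_A(z)))-b_2(h(z))$ of Lemma~\ref{lem:zb1b2}, itself a consequence of the constant-sum result $b_1+b_2\circ h\equiv N_h$ (Lemma~\ref{lem:constant}); for the converse it again invokes $\Phi=\Ad(U_h)$ via \cite{MaPacific} and extracts the integer-valued function from $v_t$ by a short $K$-theory argument. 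Your argument instead packages everything in groupoid cohomology: diagonal-preserving isomorphisms correspond to groupoid isomorphisms $\phi$, the transported gauge action is $\rho^{(\phi^{-1})^*c_A}$, and exterior equivalence by a $\DB$-valued cocycle is exactly the coboundary relation in $H^1(G_B,\Z)$, so the whole theorem becomes the statement $[\phi^*c_B]=[c_A]$ read through $H^1(G_A,\Z)\cong H^A$. This is conceptually cleaner and explains the result structurally, at the cost of importing Renault-type reconstruction from \cite{MatuiPLMS}, \cite{MMKyoto} and the isomorphism of \cite[Proposition~3.4]{MMKyoto}; the paper's approach is more self-contained and makes the implementing cocycle $u_t^{b_2}=\exp(2\pi\sqrt{-1}\,t\,b_2)$ completely explicit. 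Both routes need, and both supply, the observation that $v_t$ is forced into $\DB$ by maximal commutativity.
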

Hence if
$(X_A, \sigma_A)$ and $(X_B,\sigma_B)$
are strongly continuous orbit equivalent,
then  the dual actions of the gauge actions on the Cuntz--Krieger algebras 
are isomorphic (Corollary \ref{cor:crossedgauge}):
\begin{equation*}
(\OA{\times}_{\rho^A}{\mathbb{T}}, \hat{\rho}^A, \Z)
 \cong
(\OB{\times}_{\rho^B}{\mathbb{T}}, \hat{\rho}^B, \Z).
\end{equation*}

One-sided topological conjugacy yields a strongly continuous 
orbit equivalence.
We will finally present an example of 
one-sided topological Markov shifts 
$(X_A, \sigma_A)$ and 
$(X_B, \sigma_B)$
which are
strongly continuous orbit equivalent
but not topologically conjugate. 
Let $A$ and $B$ be the following matrices:
\begin{equation}
A=
\begin{bmatrix}
1 & 1 \\
1 & 1
\end{bmatrix},
\qquad
B=
\begin{bmatrix}
1 & 1& 0 \\
1 & 0& 1 \\
1 & 0& 1 
\end{bmatrix}. \label{eq:exmatrices}
\end{equation}
They are both irreducible and satisfy condition (I).
We will show the following theorem.
\begin{theorem} [{Theorem \ref{thm:ExSCOE}}]
The one-sided topological Markov shifts 
$(X_A, \sigma_A)$ and 
$(X_B, \sigma_B)$
defined by the matrices \eqref{eq:exmatrices}
are strongly continuous orbit equivalent,
but not topologically conjugate.
\end{theorem}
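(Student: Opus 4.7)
The statement splits into two halves. First, the failure of one-sided topological conjugacy is a fibre-cardinality obstruction. Any homeomorphism $\phi:X_A\to X_B$ with $\phi\circ\sigma_A=\sigma_B\circ\phi$ satisfies $\phi(\sigma_A^{-1}(x))=\sigma_B^{-1}(\phi(x))$, and since $\phi$ is bijective this forces $|\sigma_A^{-1}(x)|=|\sigma_B^{-1}(\phi(x))|$ for every $x\in X_A$. Now $|\sigma_A^{-1}(x)|$ is the column sum of $A$ at column $x_0$, which equals $2$ for both $x_0\in\{1,2\}$, whereas $|\sigma_B^{-1}(y)|$ is the column sum of $B$ at column $y_0$, taking the three distinct values $3$, $1$, $2$ for $y_0=1, 2, 3$. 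A constantly-$2$ function cannot agree with one taking three distinct values, so no such $\phi$ exists.

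For the strong continuous orbit equivalence, the structural fact driving the construction is that $B$ is the in-splitting of $A$ at state $2$: the two incoming edges $1\to 2$ and $2\to 2$ are separated into distinct target states (the new states $2$ and $3$ of $B$), and the outgoing edges of the old state $2$ are duplicated. I would realise the strong COE by exhibiting an explicit homeomorphism $h:X_A\to X_B$ as a finite-state transducer. The initial rule consumes $x_0$ (and also $x_1$ when $x_0=2$) and sets the output/state pair $(y_0,s)$ to $(1,1)$ if $x_0=1$, to $(2,2)$ if $(x_0,x_1)=(2,1)$, and to $(3,3)$ if $(x_0,x_1)=(2,2)$. In steady state the transducer reads one $x$-letter at a time: on input $1$ it outputs $1$ and enters state $1$; on input $2$ it outputs $2$ and enters state $2$ when the current state is $1$, and outputs $3$ and enters state $3$ when the current state is $2$ or $3$. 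Continuity of $h$ is automatic from the finite look-ahead, and an analogous inverse transducer (reading $y\in X_B$ and using that $y_m=2$ forces $y_{m-1}=1$) produces $h^{-1}$, so $h$ is a homeomorphism.

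For \eqref{eq:orbiteq1x} I would perform a case analysis on the clopen $2$-cylinder $[x_0 x_1]\subset X_A$. The structural observation is that states $2$ and $3$ of $B$ have identical outgoing rules, while the disagreement between states $1$ and $2$ on input $2$ (outputs $2$ versus $3$) is itself resolved after one more step; thus any transient state mismatch arising from the initial segment is washed out in at most two transducer steps. A direct verification yields $(k_1,l_1)=(0,1), (2,4), (2,2), (2,3)$ on $[11], [12], [21], [22]$ respectively, so $\Psi_h(1_B)(x)=l_1(x)-k_1(x)$ takes values $1, 2, 0, 1$ on these four cylinders. Defining $\xi\in C(X_A,\Z)$ by $\xi(x)=1$ when $x_0=1$ and $\xi(x)=0$ when $x_0=2$, a short check shows $\Psi_h(1_B)(x)-1_A(x)=\xi(x)-\xi(\sigma_A x)$ on each cylinder, hence $[\Psi_h(1_B)]=[1_A]$ in $H^A$ and the COE is strong. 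The companion identity \eqref{eq:orbiteq2y} follows by a symmetric case analysis via $h^{-1}$ on the $2$-cylinders of $X_B$.

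The principal difficulty is the transducer bookkeeping. Because $x_0=1$ consumes one letter while $x_0=2$ consumes two, the cocycle $(k_1, l_1)$ varies with $(x_0, x_1)$, and in each case one must confirm that the tails of $\sigma_B^{k_1}h(\sigma_A x)$ and $\sigma_B^{l_1}h(x)$ coincide despite intermediate state mismatches. Once $(k_1,l_1)$ is pinned down, the coboundary $\xi$ witnessing $[\Psi_h(1_B)]=[1_A]$ emerges by inspection.
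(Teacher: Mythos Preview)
Your proposal is correct and follows essentially the same strategy as the paper for the SCOE half: both construct an explicit homeomorphism $h:X_A\to X_B$ motivated by the fact that $B$ is an in-splitting of $A$, perform a case analysis on the four $2$-cylinders of $X_A$ to obtain $(k_1,l_1)$, and exhibit a one-symbol coboundary witnessing $[c_1]=[1_A]$. Your transducer is not literally the paper's map (you send the cylinder $[21]$ to words beginning with $2$, whereas the paper's block map $\Phi$ sends $(\beta,\alpha,\cdot)$ to words beginning with $3$), but the resulting cocycle values $(k_1,l_1)=(0,1),(2,4),(2,2),(2,3)$ and the coboundary $\xi=\chi_{[1]}$ coincide exactly with the paper's $b_1-1$. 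The paper additionally writes out the inverse $g=h^{-1}$ and its functions $(k_2,l_2,b_2)$ explicitly and checks $g\circ h=h\circ g=\id$, whereas you defer this to a symmetric case analysis; since Lemma~4.3 shows the condition $[c_1]=[1_A]$ already forces $[c_2]=[1_B]$ once COE is established, your abbreviation is justified.

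For the non-conjugacy half your argument is genuinely more elementary than the paper's. The paper invokes the Williams--Kitchens criterion that $B$ equals its own total column amalgamation, whereas you observe directly that a one-sided conjugacy must preserve fibre cardinalities $|\sigma^{-1}(\cdot)|$, and the column sums of $A$ (constantly $2$) cannot match those of $B$ (taking values $1,2,3$) under any bijection. This avoids appealing to the classification theory and is self-contained.
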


Throughout the paper,
we will use the following notations.
The set of positive integers
and
the set of nonnegative integers
are denoted 
by $\N$
and
by $\Zp$
respectively.
A word $\mu = \mu_1 \cdots \mu_k$ for $\mu_i \in \{1,\dots,N\}$
is said to be admissible for $X_A$ 
if $\mu$ appears in somewhere in an element $x$ in $X_A$.
The length of $\mu$ is $k$, 
which is denoted by $|\mu|$.
 We denote by 
$B_k(X_A)$ the set of all admissible words of length $k$.
We set 
$B_*(X_A) = \cup_{k=0}^\infty B_k(X_A)$ 
%the set of admissible words of $X_A$
where $B_0(X_A)$ denotes  the empty word $\emptyset$.
Denote by $U_\mu$ the cylinder set 
$\{ (x_n )_{n \in \Bbb N} \in X_A 
\mid x_1 =\mu_1,\dots, x_k = \mu_k \}$
for $\mu=\mu_1\cdots\mu_k \in B_k(X_A)$.  
For $x = (x_n )_{n \in \Bbb N} \in X_A$ and 
$k,l \in {\mathbb{N}}$ with $k \le l$,
we set 
\begin{equation*}
x_{[k,l]}  = x_k x_{k+1}\cdots x_l \in B_{l-k+1}(X_A),\qquad
x_{[k,\infty)}  = (x_k,x_{k+1},\dots ) \in X_A.
\end{equation*}
We denote by 
$C(X_A,\Zp)$
the set of 
$\Zp$-valued continuous functions on $X_A$.
A point $x\in X_A$ is said to be eventually periodic
if $\sigma_A^r(x) = \sigma_A^s(x)$
for some $r,s \in \Zp$ with $r \ne s$.

\section{Continuous orbit maps}
%%%%%%%%%%%%%%%%%%%%%%%%%%%%%%%%%%%%%%%%%%%%%%
\begin{definition}\label{defn:orbitmap}
Let $(X_A, \sigma_A)$ and $(X_B,\sigma_B)$ 
be one-sided  topological Markov shifts.
A local homeomorphism 
$h:X_A \rightarrow X_B$ 
is said to be {\it continuous orbit map\/}
if
there exist continuous functions
$k_1,\, l_1:X_A \rightarrow \Zp$
such that
\begin{equation}
\sigma_B^{k_1(x)} (h(\sigma_A(x))) = \sigma_B^{l_1(x)}(h(x))
\quad \text{ for}
\quad x \in X_A. \label{eq:orbiteqx}
\end{equation}
It is denoted  
by 
$h:(X_A,\sigma_A) \rightarrow (X_B,\sigma_B)$.
If a continuous orbit map
$h:(X_A,\sigma_A) \rightarrow (X_B,\sigma_B)$
 is a homeomorphism
such that its inverse
$h^{-1}:(X_B,\sigma_B) \rightarrow (X_A,\sigma_A)$
is also  a continuous orbit map,
it is called a {\it continuous orbit homeomorphism\/}.
\end{definition}
Hence 
 $(X_A, \sigma_A)$ and $(X_B,\sigma_B)$
 are continuously orbit equivalent if and only if 
 there exists a 
continuous orbit homeomorphism 
$h:(X_A,\sigma_A) \rightarrow (X_B,\sigma_B)$.

For a continuous orbit map
$h:(X_A,\sigma_A) \rightarrow (X_B,\sigma_B)$
with continuous functions
$k_1,\, l_1:X_A \rightarrow \Zp$
satisfying \eqref{eq:orbiteqx},
we put for $n \in {\mathbb{N}}$, 
\begin{equation*}
k_1^n(x) = \sum_{i=0}^{n-1}k_1(\sigma_A^i(x)),\qquad
l_1^n(x) = \sum_{i=0}^{n-1}l_1(\sigma_A^i(x)) \qquad 
\text{ for }
\quad
x \in X_A. 
\end{equation*}
We note that the following identities hold.
\begin{lemma}[cf. {\cite[Lemma 3.1]{MMPre2014}}]
For $n,m \in \Zp$, we have
\begin{align} 
k_1^{n+m}(x) & = k_1^n(x) + k_1^m(\sigma_A^n(x)) 
\quad 
\text{ for }
\quad
x \in X_A, \\
l_1^{n+m}(x) & = l_1^n(x) + l_1^m(\sigma_A^n(x))
\quad
\text{ for }
 \quad x \in X_A
\end{align}
and
\begin{equation}
\sigma_B^{k_1^n(x)}(h(\sigma_A^n(x)))
= \sigma_B^{l_1^n(x)}(h(x)) 
\quad 
\text{ for }
\quad 
x \in X_A. \label{eq:norbiteqx} 
\end{equation}
\end{lemma}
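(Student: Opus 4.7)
The plan is to dispose of the two additive identities first by direct computation, then to establish the main cocycle identity \eqref{eq:norbiteqx} by induction on $n$, using \eqref{eq:orbiteqx} at the shifted point $\sigma_A^n(x)$ together with the inductive hypothesis.

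For the additive identity involving $k_1^{n+m}$, I would simply split the defining sum at index $n$:
\begin{equation*}
k_1^{n+m}(x) = \sum_{i=0}^{n+m-1} k_1(\sigma_A^i(x)) = \sum_{i=0}^{n-1} k_1(\sigma_A^i(x)) + \sum_{j=0}^{m-1} k_1(\sigma_A^{n+j}(x)),
\end{equation*}
and recognize the second sum as $k_1^m(\sigma_A^n(x))$ by substituting $j=i-n$ and using $\sigma_A^{n+j} = \sigma_A^j \circ \sigma_A^n$. The argument for $l_1^{n+m}$ is identical. Neither identity uses the defining relation \eqref{eq:orbiteqx}; they are purely formal properties of Birkhoff-type sums over $\sigma_A$.

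For \eqref{eq:norbiteqx} I would proceed by induction on $n$, with the base case $n=0$ being trivial since $k_1^0(x) = l_1^0(x) = 0$ and both sides equal $h(x)$. The case $n=1$ is precisely \eqref{eq:orbiteqx}. For the inductive step, assume the identity holds at $n$. Using the additive relation already established (with $m=1$), rewrite
\begin{equation*}
\sigma_B^{k_1^{n+1}(x)}(h(\sigma_A^{n+1}(x))) = \sigma_B^{k_1^n(x)}\bigl(\sigma_B^{k_1(\sigma_A^n(x))}(h(\sigma_A(\sigma_A^n(x))))\bigr).
\end{equation*}
Apply \eqref{eq:orbiteqx} at the point $\sigma_A^n(x)$ to the inner expression, which converts it into $\sigma_B^{l_1(\sigma_A^n(x))}(h(\sigma_A^n(x)))$. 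Since powers of $\sigma_B$ commute with one another, the right-hand side becomes
\begin{equation*}
\sigma_B^{l_1(\sigma_A^n(x))}\bigl(\sigma_B^{k_1^n(x)}(h(\sigma_A^n(x)))\bigr) = \sigma_B^{l_1(\sigma_A^n(x))}\bigl(\sigma_B^{l_1^n(x)}(h(x))\bigr),
\end{equation*}
where the last equality is the inductive hypothesis. Combining the exponents via the additive identity for $l_1$ yields $\sigma_B^{l_1^{n+1}(x)}(h(x))$, completing the induction.

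This argument is entirely routine; there is no real obstacle once the order of operations is handled carefully. The only thing worth verifying is that the $\sigma_B$-powers genuinely commute with one another (which is obvious since $\sigma_B^a \circ \sigma_B^b = \sigma_B^{a+b} = \sigma_B^b \circ \sigma_B^a$) so that the inductive hypothesis can be inserted after pulling \eqref{eq:orbiteqx} through. The lemma is the direct discrete analogue of \cite[Lemma 3.1]{MMPre2014} and the proof mirrors that cocycle-type computation.
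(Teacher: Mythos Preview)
Your argument is correct. The paper does not actually supply a proof of this lemma; it simply records the identities and cites \cite[Lemma 3.1]{MMPre2014}, so your split-the-sum computation for the additive identities together with the straightforward induction on $n$ for \eqref{eq:norbiteqx} is exactly the routine verification one expects here.
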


%%%%%%%%%%%%%%%%%%
\begin{lemma}\label{lem:gh}
Let $A, B, C$ be irreducible matrices with entries in $\{0,1\}$
satisfying condition (I).
Let
$h:(X_A,\sigma_A) \rightarrow (X_B,\sigma_B)$
and
$g:(X_B,\sigma_B) \rightarrow (X_C,\sigma_C)$
be continuous orbit maps such that
there exist continuous functions
$k_1,\, l_1:X_A \rightarrow \Zp$
and
$k_2,\, l_2:X_B \rightarrow \Zp$
satisfying
\begin{align}
\sigma_B^{k_1(x)} (h(\sigma_A(x))) & = \sigma_B^{l_1(x)}(h(x))
\quad \text{ for}
\quad x \in X_A, \label{eq:sBkh}\\ 
\sigma_C^{k_2(y)} (g(\sigma_B(y))) & = \sigma_C^{l_2(y)}(g(y))
\quad \text{ for}
\quad y \in X_B. \label{eq:sCkg}
\end{align}
Put
\begin{align}
k_{3}(x) & = k_2^{l_1(x)}(h(x)) + l_2^{k_1(x)}(h(\sigma_A(x)))  
\quad 
\text{ for }
\quad x \in X_A, \label{eq:k3} \\
l_{3}(x) & = l_2^{l_1(x)}(h(x)) + k_2^{k_1(x)}(h(\sigma_A(x)))
\quad 
\text{ for }
\quad x \in X_A. \label{eq:l3} 
\end{align}
Then we have
\begin{equation}
\sigma_C^{k_{3}(x)} (g\circ h(\sigma_A(x)))  
= \sigma_C^{l_{3}(x)}(g\circ h(x))
\quad \text{ for}
\quad x \in X_A.
\end{equation}
Hence 
$g \circ h :X_A \rightarrow X_C$
gives rise to a continuous orbit map.
\end{lemma}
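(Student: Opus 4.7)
The plan is to reduce everything to the $n$-step orbit identity \eqref{eq:norbiteqx} established in the preceding lemma, applied to the map $g$, and then combine two instances of it by using the one-step identity \eqref{eq:sBkh} for $h$ as a bridge.

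First I would apply \eqref{eq:norbiteqx} with $g$ in place of $h$, writing it as
\begin{equation*}
\sigma_C^{k_2^n(y)}(g(\sigma_B^n(y))) = \sigma_C^{l_2^n(y)}(g(y)), \qquad y \in X_B,\ n \in \Zp.
\end{equation*}
Specialising once to $(y,n) = (h(x), l_1(x))$ and once to $(y,n) = (h(\sigma_A(x)), k_1(x))$ yields
\begin{align*}
\sigma_C^{k_2^{l_1(x)}(h(x))}\bigl(g\bigl(\sigma_B^{l_1(x)}(h(x))\bigr)\bigr) &= \sigma_C^{l_2^{l_1(x)}(h(x))}(g(h(x))), \\
\sigma_C^{k_2^{k_1(x)}(h(\sigma_A(x)))}\bigl(g\bigl(\sigma_B^{k_1(x)}(h(\sigma_A(x)))\bigr)\bigr) &= \sigma_C^{l_2^{k_1(x)}(h(\sigma_A(x)))}(g(h(\sigma_A(x)))).
\end{align*}
The key observation is that, thanks to \eqref{eq:sBkh}, the two arguments of $g$ on the left-hand sides coincide: both equal the single point $w := g(\sigma_B^{l_1(x)}(h(x))) = g(\sigma_B^{k_1(x)}(h(\sigma_A(x))))$.

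Next I would apply the shift $\sigma_C^{k_2^{k_1(x)}(h(\sigma_A(x)))}$ to the first equation and the shift $\sigma_C^{k_2^{l_1(x)}(h(x))}$ to the second equation. Since the powers add and addition in $\Zp$ is commutative, the two left-hand sides become the same power of $\sigma_C$ applied to $w$. Setting the right-hand sides equal therefore gives
\begin{equation*}
\sigma_C^{l_2^{l_1(x)}(h(x)) + k_2^{k_1(x)}(h(\sigma_A(x)))}(g(h(x))) = \sigma_C^{k_2^{l_1(x)}(h(x)) + l_2^{k_1(x)}(h(\sigma_A(x)))}(g(h(\sigma_A(x)))),
\end{equation*}
which is precisely the required identity with the stated $k_3$ and $l_3$.

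It remains to verify that $g \circ h$ is a local homeomorphism and that $k_3, l_3 : X_A \to \Zp$ are continuous. The first is immediate since a composition of local homeomorphisms is a local homeomorphism. For continuity, note that $l_1, k_1$ are continuous $\Zp$-valued functions on a compact space and hence locally constant; on each open set where $l_1$ and $k_1$ are simultaneously constant, $k_3$ and $l_3$ become finite sums of compositions of continuous functions, so they are continuous there, and hence continuous on all of $X_A$. I do not expect any genuine obstacle here: the substantive content of the lemma is the algebraic identity, and the main bookkeeping task is simply to make sure that the two shift powers applied to the common point $w$ are combined in the order that produces exactly the formulas \eqref{eq:k3} and \eqref{eq:l3}.
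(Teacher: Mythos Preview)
Your proof is correct and follows essentially the same approach as the paper: both apply the $n$-step identity \eqref{eq:norbiteqx} for $g$ at $(y,n)=(h(x),l_1(x))$ and $(y,n)=(h(\sigma_A(x)),k_1(x))$, then use \eqref{eq:sBkh} to identify the common point and match the resulting $\sigma_C$-powers. Your write-up is in fact slightly more complete, since you also check explicitly that $g\circ h$ is a local homeomorphism and that $k_3,l_3$ are continuous, points the paper leaves implicit.
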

\begin{proof}
Take an arbitrary element  $x \in X_A$.
For $n \in \N$ and $y \in X_B$, we have
by \eqref{eq:norbiteqx} 
\begin{equation}
\sigma_C^{k_2^n(y)} (g(\sigma_B^n(y))) = \sigma_C^{l_2^n(y)}(g(y)). \label{eq:CBn}
\end{equation}
Apply \eqref{eq:CBn} for $n=l_1(x), y =h(x)$, one has
\begin{equation*}
\sigma_C^{k_2^{l_1(x)}(h(x))} (g(\sigma_B^{l_1(x)}( h(x)))) 
= \sigma_C^{l_2^{l_1(x)}(h(x))}(g(h(x))). \label{eq:CBn1}
\end{equation*}
Apply \eqref{eq:CBn} for $n=k_1(x), y =h(\sigma_A(x))$, one has
\begin{equation*}
\sigma_C^{k_2^{k_1(x)}(h(\sigma_A(x)))} 
(g(\sigma_B^{k_1(x)}( h(\sigma_A(x))))) 
= \sigma_C^{l_2^{k_1(x)}(h(\sigma_A(x)))}(g(h(\sigma_A(x)))). \label{eq:CBn2}
\end{equation*}
Put
$n= l_1(x),m = k_1(x)$.
By \eqref{eq:sBkh},
we have
\begin{align*}
\sigma_C^{k_2^n(h(x)) + l_2^m(h(\sigma_A(x)))}(g\circ h(\sigma_A(x)))
%& =\sigma_C^{k_2^n(h(x))}(
%\sigma_C^{l_2^m(h(\sigma_A(x)))}(g\circ h(\sigma_A(x)))) \\
& =\sigma_C^{k_2^n(h(x))}(
\sigma_C^{k_2^m(h(\sigma_A(x)))}(g(\sigma_B^m( h(\sigma_A(x)))))) \\
& =\sigma_C^{k_2^n(h(x))}(
\sigma_C^{k_2^m(h(\sigma_A(x)))}(g(\sigma_B^n( h(x))))) \\
%& =\sigma_C^{k_2^m(h(\sigma_A(x)))}(
%\sigma_C^{k_2^n(h(x))}(g(\sigma_B^n( h(x)))) \\
& =\sigma_C^{k_g^m(h(\sigma_A(x)))}(
\sigma_C^{l_2^n(h(x))}(g(h(x))) \\
& =\sigma_C^{ k_2^m(h(\sigma_A(x))) + l_2^n(h(x))}(g(h(x))).
\end{align*}
\end{proof}
More generally 
we have the following formula.
The proof is routine.
\begin{lemma}
Keep the above situation.
For $p \in \N$, we have
\begin{equation*}
\sigma_C^{k_2^n(h(x)) + l_2^m(h(\sigma_A^p(x)))}(g\circ h(\sigma_A^p(x)))
=\sigma_C^{ k_2^m(h(\sigma_A^p(x))) + l_2^n(h(x))}(g(h(x)))
\end{equation*}
where
$n=l_1^p(x), m= k_1^p(x)$.
\end{lemma}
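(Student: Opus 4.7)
The plan is to imitate the proof of Lemma \ref{lem:gh} verbatim, replacing $\sigma_A$ by its $p$-th iterate $\sigma_A^p$ and using the cocycle-level identities $k_1^p, l_1^p$ instead of the one-step functions $k_1, l_1$. The starting point is the iterated version of \eqref{eq:sBkh}, namely
\begin{equation*}
\sigma_B^{k_1^p(x)}(h(\sigma_A^p(x))) = \sigma_B^{l_1^p(x)}(h(x)),
\end{equation*}
which is \eqref{eq:norbiteqx} applied with $n=p$. With the abbreviations $n = l_1^p(x)$ and $m = k_1^p(x)$ this reads $\sigma_B^m(h(\sigma_A^p(x))) = \sigma_B^n(h(x))$, the key substitution that will drive the chain of equalities.

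Next I would apply the iterated orbit identity \eqref{eq:CBn} for $g$ twice. First with the pair $(n, h(x))$, which gives
\begin{equation*}
\sigma_C^{k_2^n(h(x))}\bigl(g(\sigma_B^n(h(x)))\bigr) = \sigma_C^{l_2^n(h(x))}(g(h(x))),
\end{equation*}
and then with $(m, h(\sigma_A^p(x)))$, which gives
\begin{equation*}
\sigma_C^{k_2^m(h(\sigma_A^p(x)))}\bigl(g(\sigma_B^m(h(\sigma_A^p(x))))\bigr) = \sigma_C^{l_2^m(h(\sigma_A^p(x)))}(g(h(\sigma_A^p(x)))).
\end{equation*}

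Starting from the left hand side of the claimed equation, I would first split the exponent using $\sigma_C^{a+b} = \sigma_C^a \circ \sigma_C^b$, then feed the second displayed identity above into the inner piece to replace $\sigma_C^{l_2^m(h(\sigma_A^p(x)))}(g(h(\sigma_A^p(x))))$ by $\sigma_C^{k_2^m(h(\sigma_A^p(x)))}(g(\sigma_B^m(h(\sigma_A^p(x)))))$. I would then use the iterated $h$-identity $\sigma_B^m(h(\sigma_A^p(x))) = \sigma_B^n(h(x))$ to turn the argument of $g$ into $\sigma_B^n(h(x))$, commute the two powers of $\sigma_C$, apply the first displayed identity to convert $\sigma_C^{k_2^n(h(x))}(g(\sigma_B^n(h(x))))$ into $\sigma_C^{l_2^n(h(x))}(g(h(x)))$, and finally recombine exponents. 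This reproduces the right hand side exactly.

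There is really no obstacle here: the argument is a purely mechanical re-run of Lemma \ref{lem:gh} with the index $1$ promoted to $p$, and the only thing one must be careful about is keeping track of which of the four numbers $k_2^n(h(x)), l_2^n(h(x)), k_2^m(h(\sigma_A^p(x))), l_2^m(h(\sigma_A^p(x)))$ appears at each stage. An alternative presentation, if one wanted to avoid the routine bookkeeping, would be to induct on $p$: for $p=1$ the result is Lemma \ref{lem:gh}, and the inductive step consists of applying that lemma to $\sigma_A^{p-1}(x)$ together with the cocycle identities for $k_1^p, l_1^p, k_2^n, l_2^n, k_2^m, l_2^m$. Either way, no new ingredient beyond what is already established in the preceding lemmas is required, which is why the paper describes the proof as routine.
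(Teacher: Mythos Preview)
Your proposal is correct and is exactly the intended argument: the paper itself gives no proof beyond ``The proof is routine,'' and the routine in question is precisely the verbatim re-run of the computation in Lemma~\ref{lem:gh} with $\sigma_A$ replaced by $\sigma_A^p$ and $(k_1,l_1)$ by $(k_1^p,l_1^p)$, driven by \eqref{eq:norbiteqx} and \eqref{eq:CBn}. Your chain of equalities matches the paper's displayed computation for $p=1$ line by line.
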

We have the following proposition.
\begin{proposition}
Let $A, B, C, D$ be irreducible matrices with entries in $\{0,1\}$
satisfying condition (I).
Let
$h:(X_A,\sigma_A) \rightarrow (X_B,\sigma_B)$
and
$h':(X_C,\sigma_C) \rightarrow (X_D,\sigma_D)$
be
continuously orbit homeomorphisms.
If there exists a continuous orbit map
$g:(X_A,\sigma_A) \rightarrow (X_C,\sigma_C)$,
then
the map
$h'\circ g \circ h^{-1}: X_B \rightarrow X_D$ 
becomes a continuous orbit map.
Hence continuous orbit maps form morphisms
of the continuous orbit equivalence classes of 
one-sided topological Markov shifts.
\end{proposition}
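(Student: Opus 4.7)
The plan is to reduce the proposition to a double application of Lemma \ref{lem:gh}. Since $h:(X_A,\sigma_A)\to(X_B,\sigma_B)$ is a continuous orbit homeomorphism, by the definition of continuous orbit homeomorphism the inverse $h^{-1}:(X_B,\sigma_B)\to(X_A,\sigma_A)$ is itself a continuous orbit map, with its own pair of continuous cocycle functions on $X_B$ taking values in $\Zp$. Similarly, $h':(X_C,\sigma_C)\to(X_D,\sigma_D)$, being a continuous orbit homeomorphism, is in particular a continuous orbit map. Thus we have three continuous orbit maps
\[
(X_B,\sigma_B)\xrightarrow{\,h^{-1}\,}(X_A,\sigma_A)\xrightarrow{\,g\,}(X_C,\sigma_C)\xrightarrow{\,h'\,}(X_D,\sigma_D)
\]
lined up for composition.

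First I would apply Lemma \ref{lem:gh} to the pair $(h^{-1}, g)$ to conclude that $g\circ h^{-1}:(X_B,\sigma_B)\to(X_C,\sigma_C)$ is a continuous orbit map, with cocycle data given by the explicit formulas \eqref{eq:k3}, \eqref{eq:l3} built out of the cocycle functions for $h^{-1}$ and $g$. Then I would apply Lemma \ref{lem:gh} a second time to the pair $(g\circ h^{-1}, h')$, producing the desired continuous orbit map $h'\circ g\circ h^{-1}:(X_B,\sigma_B)\to(X_D,\sigma_D)$, again with an explicit pair of cocycle functions $k,l\in C(X_B,\Zp)$ obtained by the same compositional recipe.

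It remains only to observe that $h'\circ g\circ h^{-1}$ is a local homeomorphism, since the composition of local homeomorphisms is a local homeomorphism, so it qualifies as a continuous orbit map in the sense of Definition \ref{defn:orbitmap}. The final sentence of the proposition, that continuous orbit maps form morphisms between continuous orbit equivalence classes, then follows: the construction shows that the notion of continuous orbit map descends from pointed one-sided shifts to their continuous orbit equivalence classes, i.e.\ it is independent of the choice of representatives $(X_A,\sigma_A)$ in $[(X_A,\sigma_A)]$ and $(X_C,\sigma_C)$ in $[(X_C,\sigma_C)]$, up to the constructed morphism.

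There is essentially no genuine obstacle here; all the work was already carried out in Lemma \ref{lem:gh}, and the present statement is a formal corollary. The only mild care needed is to keep track of \emph{which} continuous orbit map is being replaced by its inverse, so that the direction of the domains and codomains matches the hypothesis of Lemma \ref{lem:gh}; but once $h^{-1}$ is recognised as a legitimate continuous orbit map, the two applications of the lemma are routine.
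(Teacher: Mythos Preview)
Your proposal is correct and matches the paper's intended approach: the paper states this proposition without proof immediately after Lemma~\ref{lem:gh}, treating it as a direct consequence of that lemma applied twice, exactly as you describe. There is nothing to add.
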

Therefore we have
\begin{proposition}\label{prop:COEcategory}
The objects of  continuous orbit equivalence classes
of one-sided topological Markov shifts
with the morphisms of continuous orbit maps
form a category.  
\end{proposition}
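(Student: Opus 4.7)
The plan is to verify the three categorical axioms: well-definedness of $\Hom$-sets on equivalence classes, associativity of composition, and existence of identity morphisms. Most of the nontrivial work is already in place from Lemma \ref{lem:gh} and the preceding proposition, so what remains is largely organizational.

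First I would specify the data of the category. The objects are the continuous orbit equivalence classes $[(X_A, \sigma_A)]$. A morphism $[(X_A,\sigma_A)] \to [(X_B,\sigma_B)]$ is represented by a continuous orbit map $g: (X_A,\sigma_A) \to (X_B,\sigma_B)$, where two such maps $g_1, g_2$ represent the same morphism if $g_2 = h' \circ g_1 \circ h^{-1}$ for some continuous orbit homeomorphisms $h$ and $h'$ between representatives of the respective classes. The preceding proposition guarantees that this indeed produces a continuous orbit map, so the $\Hom$-sets are well-defined on classes.

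Next I would check identities. For each class $[(X_A,\sigma_A)]$, the identity map $\id_{X_A}: X_A \to X_A$ is trivially a local homeomorphism, and the orbit cocycle relation \eqref{eq:orbiteqx} holds with $k_1 \equiv 0$ and $l_1 \equiv 1$, since
\begin{equation*}
\sigma_A^{0}(\id(\sigma_A(x))) = \sigma_A(x) = \sigma_A^{1}(\id(x)).
\end{equation*}
Thus $\id_{X_A}$ is a continuous orbit map, and its class serves as the identity morphism. Composing it on either side with a continuous orbit map $g$ returns $g$ itself, so the unit axioms hold.

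For composition, given continuous orbit maps $h: (X_A,\sigma_A) \to (X_B,\sigma_B)$ and $g: (X_B,\sigma_B) \to (X_C,\sigma_C)$, Lemma \ref{lem:gh} produces explicit cocycles $k_3, l_3 \in C(X_A, \Zp)$ witnessing that $g \circ h$ is a continuous orbit map, so composition lands in the correct $\Hom$-set. Well-definedness on classes is precisely the content of the preceding proposition: replacing $h$ or $g$ by conjugates via continuous orbit homeomorphisms yields a continuous orbit map, and an immediate second application of that proposition shows the class of the composite is independent of the chosen representatives. Associativity is inherited from ordinary associativity of function composition, since at the level of maps $(f \circ g) \circ h = f \circ (g \circ h)$ pointwise on $X_A$.

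The only point requiring a moment of care is that the identity and composition axioms descend to equivalence classes in a compatible way, but this reduces to the two statements just invoked (Lemma \ref{lem:gh} and the preceding proposition). I do not expect a genuine obstacle here; the main conceptual content was already absorbed into the verification that compositions and conjugates of continuous orbit maps remain continuous orbit maps.
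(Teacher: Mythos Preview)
Your proposal is correct and follows the same approach as the paper: in the paper the proposition is stated immediately after the preceding proposition with only the connective ``Therefore we have,'' so the paper is treating it as a routine consequence of Lemma~\ref{lem:gh} and that proposition, exactly as you spell out. Your explicit verification of identities via $k_1\equiv 0$, $l_1\equiv 1$ and of associativity via function composition simply fills in details the paper leaves implicit.
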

%Let us denote by 
%$\mathcal{C}_{\COE}=(\mathcal{O}_{\COE}, \mathcal{M}_{\COE})$ 
%the category whose object
%$\mathcal{O}_{\COE}$ is the  continuous orbit equivalence classes
%of one-sided topological Markov shifts
%with the morphisms $\mathcal{M}_{\COE}$ of continuous orbit maps. 

%%%%%%%%%%%%%%%%%%%%%%%%%%%%%%%%%%%%%%%%%
%%%%%%%%%%%%%%%%%%%%%%%%%%%%%%%%%%%%%%%%%%%
\section{Cohomology groups}
%%%%%%%%%%%%%%%%%%%%%%%%%%%%%%%%%%%%%%%%%%%%%%
%%%%%%%%%%%%%%%%%%%%%%%%%%
For a one-sided topological Markov shift
$(X_A,\sigma_A)$,
we denote by $\cobdy(\sigma_A)$
the subgroup
$\{\xi-\xi\circ\sigma_A\mid\xi\in C(X_A,{\mathbb{Z}})\}$
of $C(X_A,\Z)$, 
and set the quotient group
\begin{equation*}
H^A=C(X_A,{\mathbb{Z}})/\{\xi-\xi\circ\sigma_A\mid\xi\in C(X_A,{\mathbb{Z}})\}.
\end{equation*}
In this section,
we will construct a contravariant functor
$\Psi$
from the category of continuous orbit equivalence classes 
of one-sided topological Markov shifts
to the category of abelian groups.
Let
$h:(X_A,\sigma_A) \rightarrow (X_B,\sigma_B)$ be
 a continuous orbit map
with continuous functions 
$k_1, l_1: X_A \rightarrow \Z$
satisfying \eqref{eq:orbiteqx}.
 \begin{lemma}
The function $c_1(x) = l_1(x) - k_1(x)$ for $x \in X_A$
does not depend on the choice of the continuous functions $k_1, l_1$
satisfing \eqref{eq:orbiteqx}.
\end{lemma}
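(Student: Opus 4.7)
The plan is to derive from the two orbit equations a single identity that pins down $l_1-k_1$, and then to invoke condition (I) to rule out the remaining ambiguity. Suppose $(k_1,l_1)$ and $(k_1',l_1')$ are two pairs of continuous $\Zp$-valued functions on $X_A$ both satisfying \eqref{eq:orbiteqx}. Applying $\sigma_B^{k_1'(x)}$ to \eqref{eq:orbiteqx} for $(k_1,l_1)$ and $\sigma_B^{k_1(x)}$ to \eqref{eq:orbiteqx} for $(k_1',l_1')$ produces the same left-hand side $\sigma_B^{k_1(x)+k_1'(x)}(h(\sigma_A(x)))$, so the right-hand sides agree:
\begin{equation*}
\sigma_B^{l_1(x)+k_1'(x)}(h(x))=\sigma_B^{l_1'(x)+k_1(x)}(h(x)), \qquad x\in X_A.
\end{equation*}
Writing $c_1 = l_1-k_1$ and $c_1' = l_1'-k_1'$, this says that for every $x$ the point $h(x)$ is fixed by a pair of iterates of $\sigma_B$ whose exponents differ by $c_1(x)-c_1'(x)$.

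Next I would argue by contradiction. Suppose $c_1(x_0)\ne c_1'(x_0)$ at some $x_0\in X_A$. Since $k_1,l_1,k_1',l_1'$ are continuous and integer valued they are locally constant, so I can choose a nonempty open neighborhood $U$ of $x_0$ on which all four functions are constant and on which $h$ restricts to a homeomorphism onto its image $h(U)$. Let $p$ and $q$ denote the constant values of $l_1+k_1'$ and $l_1'+k_1$ on $U$; by assumption $p\ne q$. The displayed identity then gives $\sigma_B^p(y)=\sigma_B^q(y)$ for every $y\in h(U)$, so every point of $h(U)$ is eventually periodic in $X_B$.

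The contradiction comes from a cardinality count. A local homeomorphism is an open map, so $h(U)$ is a nonempty open subset of $X_B$. Since $B$ is irreducible and satisfies condition (I), the space $X_B$ is a Cantor space, so every nonempty open subset is uncountable. On the other hand, an eventually periodic point of $X_B$ is determined by a finite preperiod followed by an admissible cycle, so the set of eventually periodic points is countable. This is incompatible with $h(U)$ being uncountable and entirely eventually periodic, so we must have $c_1=c_1'$ on all of $X_A$. The only real ingredient beyond the formal manipulation is the appeal to condition (I) to guarantee that $X_B$ has no isolated points; once that is granted the argument is straightforward bookkeeping.
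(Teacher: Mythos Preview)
Your proof is correct and follows essentially the same approach as the paper: both derive from the two orbit equations an identity $\sigma_B^{p(x)}(h(x))=\sigma_B^{q(x)}(h(x))$ with $p-q=c_1-c_1'$, restrict to a neighborhood on which $h$ is a homeomorphism and the functions are locally constant, and then invoke condition~(I) to contradict the fact that all points of the image would be eventually periodic. The only cosmetic difference is that the paper phrases the final contradiction via density of non-eventually-periodic points in the open image, whereas you use the equivalent cardinality observation (uncountable open set versus countably many eventually periodic points).
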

\begin{proof}
Let 
$k'_1, l'_1 \in C(X_A,\Z)$
be another continuous functions for $h$ satisfying
\begin{equation}
\sigma_B^{k'_1(x)} (h(\sigma_A(x))) = \sigma_B^{l'_1(x)}(h(x))
\quad \text{ for}
\quad x \in X_A. \label{eq:primeorbiteqx}
\end{equation}
Since $k_1, k'_1$ are both continuous,
there exists $K \in \N$ such that 
$k_1(x), k'_1(x) \le K$ for all $x \in X_A$. 
Put
$c'_1(x) = l'_1(x) - k'_1(x)$ 
so that
we have
\begin{equation*}
 \sigma_B^{c_1(x)+K}(h(x))
 = \sigma_B^{K}(h(\sigma_A(x))) 
 = \sigma_B^{c'_1(x)+K}(h(x)) \quad 
 \text{ for } x \in X_A. 
\end{equation*}
Suppose that 
$c_1(x_0) \ne c'_1(x_0)$ for some $x_0 \in X_A$.
There exists a clopen neighborhood $U$ of $x_0$
such that 
$$
c_1(x) \ne c'_1(x) \quad \text{ for all } x \in U.
$$
Now $h$ is a local homeomorphism,
one may take a clopen neighborhood 
$V \subset U$ of $x_0$ such that
$h: V \rightarrow h(V)$ is a homeomorphism.
As  
$
c_1(x) +K \ne c'_1(x) +K
$
for all $x \in V$,
$h(x)$ are eventually periodic points
for all $x \in V$,
which is a contradiction to
the fact that the set of non eventualy periodic points of $h(V)$ is dense in $h(V)$.
%Therefore we conclude that $c_1(x) = c'_1(x)$ for all $x \in X_A$.
\end{proof}
We call the above function $c_1$
the cocycle function of $h$. 
For $f \in C(X_B,\Z )$, define
\begin{equation}
\Psi_{h}(f)(x)
= \sum_{i=0}^{l_1(x)-1} f(\sigma_B^i(h(x))) 
- \sum_{j=0}^{k_1(x)-1} f(\sigma_B^j(h(\sigma_A(x)))),
 \quad 
 x \in X_A. \label{eq:Psihfx}
\end{equation}
It is easy to see that 
$\Psi_h(f) \in C(X_A,\Z)$.
Thus $\Psi_h: C(X_B,\Z) \rightarrow C(X_A,\Z)$
gives rise to a homomorphism of abelian groups.

\begin{lemma}
$\Psi_h: C(X_B,\Z) \rightarrow C(X_A,\Z)$
does not depend on the choice of the functions 
$k_1, l_1$ satisfying \eqref{eq:orbiteqx}.
\end{lemma}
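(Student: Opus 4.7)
The independence of $\Psi_h$ on the choice of $(k_1, l_1)$ rests on a single observation: by the previous lemma, the cocycle $c_1 = l_1 - k_1$ does not depend on the choice, so if $(k'_1, l'_1)$ is another admissible pair, then $l_1 - k_1 = l'_1 - k'_1$, equivalently $l'_1 - l_1 = k'_1 - k_1$. This parity condition is what makes the comparison work. I would prove $\Psi_{h,(k_1,l_1)}(f) = \Psi_{h,(k'_1,l'_1)}(f)$ by comparing both with a common refinement rather than directly, to avoid splitting into sign cases.

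The comparison goes through the pointwise maxima $L(x) := \max\{l_1(x), l'_1(x)\}$ and $K(x) := \max\{k_1(x), k'_1(x)\}$, which are continuous $\Zp$-valued functions. Using $l_1 - k_1 = l'_1 - k'_1$, one checks the identities $L - l_1 = K - k_1$ and $L - l'_1 = K - k'_1$. Then I would verify that $(K, L)$ still satisfies the orbit equation: applying $\sigma_B^{K(x) - k_1(x)}$ to both sides of $\sigma_B^{k_1(x)}(h(\sigma_A(x))) = \sigma_B^{l_1(x)}(h(x))$ yields
\begin{equation*}
\sigma_B^{K(x)}(h(\sigma_A(x))) = \sigma_B^{l_1(x) + K(x) - k_1(x)}(h(x)) = \sigma_B^{L(x)}(h(x)).
\end{equation*}

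It therefore suffices to show $\Psi_{h,(k_1,l_1)}(f) = \Psi_{h,(K,L)}(f)$ (and by symmetry the same for $(k'_1, l'_1)$). Setting $d(x) := L(x) - l_1(x) = K(x) - k_1(x) \geq 0$, the difference of the two defining expressions is
\begin{equation*}
\sum_{r=0}^{d(x)-1}\Bigl[f(\sigma_B^{l_1(x)+r}(h(x))) - f(\sigma_B^{k_1(x)+r}(h(\sigma_A(x))))\Bigr],
\end{equation*}
and each summand vanishes because $\sigma_B^{l_1(x)+r}(h(x)) = \sigma_B^r\bigl(\sigma_B^{l_1(x)}(h(x))\bigr) = \sigma_B^r\bigl(\sigma_B^{k_1(x)}(h(\sigma_A(x)))\bigr) = \sigma_B^{k_1(x)+r}(h(\sigma_A(x)))$ by the orbit equation for $(k_1, l_1)$.

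\textbf{Main obstacle.} The proof itself is little more than bookkeeping once one has the pointwise-maximum idea; the only substantive point is the verification that $(K, L)$ satisfies the orbit equation, and this depends essentially on the cocycle identity $l'_1 - l_1 = k'_1 - k_1$ supplied by the previous lemma. Without that cocycle equality the direct pointwise comparison would have to be split into sign cases on clopen subsets, which is workable but less clean.
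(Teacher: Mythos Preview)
Your argument is correct. The key reduction---passing to the common refinement $(K,L)=(\max\{k_1,k'_1\},\max\{l_1,l'_1\})$ and using the cocycle identity $l'_1-l_1=k'_1-k_1$ to see that $(K,L)$ again satisfies the orbit equation with $L-l_1=K-k_1\ge 0$---is exactly the right move, and your telescoping computation for the difference $\Psi_{h,(K,L)}(f)-\Psi_{h,(k_1,l_1)}(f)$ is clean and complete.

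The paper itself does not give a proof here, only citing \cite[Lemma 4.2]{MMPre2014}. The argument there proceeds along the same lines: one observes that replacing $(k_1,l_1)$ by $(k_1+g,l_1+g)$ for any $g\in C(X_A,\Zp)$ preserves both the orbit equation and the value of $\Psi_h(f)$, and then uses the cocycle identity to find a common enlargement of the two pairs. Your pointwise-maximum construction is a concrete realization of this; the approaches are essentially the same.
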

\begin{proof}
The proof is similar to the proof of
\cite[Lemma 4.2]{MMPre2014}.
%Let $k'_1, l'_1 \in C(X_A,\Z)$
%be another functions for $h$ which satisfying
%\begin{equation}
%\sigma_B^{k'_1(x)} (h(\sigma_A(x))) = \sigma_B^{l'_1(x)}(h(x))
%\quad \text{ for} \quad x \in X_A. \label{eq:primeorbiteqx}
%\end{equation}
%We fix an arbitrary $x \in X_A$.
%By preceding lemma, we see that $l_1(x) - k_1(x) = l'_1(x) - k'_1(x)$.
%Assume that $l'_1(x) < l_1(x)$
%so that $k_1(x) - k'_1(x) = l_1(x) - l'_1(x)>0$.
%We put
%\begin{equation*}
%\Psi'_{h}(f)(x)
%= \sum_{i=0}^{l'_1(x)-1} f(\sigma_B^i(h(x)) 
%- \sum_{j=0}^{k'_1(x)-1} f(\sigma_B^j(h(\sigma_A(x))). 
%\end{equation*}
%It then follows that 
%\begin{equation*}
%\Psi_{h}(f)(x) - \Psi'_{h}(f)(x)
%= \sum_{i=l'_1(x)}^{l_1(x)-1} f(\sigma_B^i(h(x)) 
%- \sum_{j=k'_1(x)}^{k_1(x)-1} f(\sigma_B^j(h(\sigma_A(x))). 
%\end{equation*}
%As$\sigma_B^{l'_1(x)+j}(h(x) = \sigma_B^{k'_1(x)+j}(h(\sigma_A(x))$
%for $j=0,1,\dots, l_1(x)-l'_1(x)-1 (=k_1(x)-k'_1(x)-1)$,
%we see that
%$\Psi_{h}(f)(x) - \Psi'_{h}(f)(x)=0$.
\end{proof}

\noindent
{\bf Examples.}

{\bf 1.}
Let $h:X_A \rightarrow X_B$ be a topological conjugacy
as one-sided subshifts.
Then $h$ is a continuous orbit map such that 
$\Psi_{h}(f) = f \circ h$ for $f \in C(X_B,\Z)$.

{\bf 2.}
For $A=B$,
the shift map
$\sigma_A: X_A \rightarrow X_A$
is a continuous orbit map on $X_A$
such that 
$\Psi_{\sigma_A}(f) = f\circ  \sigma_A$ for $f \in C(X_A,\Z)$.
%%%%%%%%%%%%%%%%%%%%%%%%%%%%%%%%

The equalities in the following lemma are 
basic in our further discussions.
The proof is similar to the proof of \cite[Lemma 4.3]{MMPre2014}.
\begin{lemma} \label{lem:Am}
Let
$h: (X_A,\sigma_A) \rightarrow (X_B,\sigma_B)$
be a continuous orbit map 
with continuous functions 
$k_1, l_1$ satisfying \eqref{eq:orbiteqx}.
For $f \in C(X_B,{\mathbb{Z}})$,
$x \in X_A$ and $m=1,2,\dots,$ 
the following equalities hold:
\begin{align*}
& \sum_{i=0}^{m-1} \{
\sum_{i'=0}^{l_1 (\sigma_B^i(x))-1} f(\sigma_B^{i'}(h(\sigma_A^i(x))))
-
\sum_{j'=0}^{k_1 (\sigma_A^i(x))-1} f(\sigma_B^{j'}(h(\sigma_A^{i}(x)))) \}\\
=
&
\sum_{i'=0}^{l_1^m (x)-1} f(\sigma_B^{i'}(h(x)))
-
\sum_{j'=0}^{k_1^m (x)-1} f(\sigma_B^{j'}(h(\sigma_A^{m}(x)))).
\end{align*}
\end{lemma}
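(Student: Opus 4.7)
The plan is to rewrite the statement as the cocycle identity
\begin{equation*}
\sum_{i=0}^{m-1} \Psi_h(f)(\sigma_A^i(x)) = \sum_{i'=0}^{l_1^m(x)-1} f(\sigma_B^{i'}(h(x))) - \sum_{j'=0}^{k_1^m(x)-1} f(\sigma_B^{j'}(h(\sigma_A^m(x))))
\end{equation*}
(the stated form being this after substituting the definition of $\Psi_h(f)(\sigma_A^i(x))$ into the outer sum), and to establish it by induction on $m$. The base case $m=1$ is the very definition \eqref{eq:Psihfx} of $\Psi_h(f)$.

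For the inductive step, I would add $\Psi_h(f)(\sigma_A^m(x))$ to the formula at level $m$ and reorganize the two displayed sums so that the result matches the formula at level $m+1$. The first sum is easy: using the cocycle identity $l_1^{m+1}(x) = l_1^m(x) + l_1(\sigma_A^m(x))$ proved in the preceding lemma, together with the orbit relation \eqref{eq:norbiteqx} at level $m$, namely $\sigma_B^{l_1^m(x)}(h(x)) = \sigma_B^{k_1^m(x)}(h(\sigma_A^m(x)))$, one reindexes the tail $\sum_{i'=l_1^m(x)}^{l_1^{m+1}(x)-1}$ of $f(\sigma_B^{i'}(h(x)))$ to turn it into a sum of $f$ along the $\sigma_B$-orbit of $h(\sigma_A^m(x))$ starting at position $k_1^m(x)$ of length $l_1(\sigma_A^m(x))$. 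For the second sum, one uses the cocycle identity $k_1^{m+1}(x) = k_1^m(x) + k_1(\sigma_A^m(x))$ together with the orbit relation at the point $\sigma_A^m(x)$, i.e.\ $\sigma_B^{k_1(\sigma_A^m(x))}(h(\sigma_A^{m+1}(x))) = \sigma_B^{l_1(\sigma_A^m(x))}(h(\sigma_A^m(x)))$, to rewrite the tail $\sum_{j'=k_1^m(x)}^{k_1^{m+1}(x)-1}$ of $f(\sigma_B^{j'}(h(\sigma_A^{m+1}(x))))$ as a sum along the orbit of $h(\sigma_A^m(x))$. After these rewrites, each newly introduced tail contribution cancels against the corresponding part of $\Psi_h(f)(\sigma_A^m(x))$ and the remainders assemble exactly into the right-hand side at level $m+1$.

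The argument is purely formal once one commits to the indices, so the main obstacle is not conceptual but bookkeeping: one must consistently apply the cocycle identities for $k_1^n, l_1^n$ and repeatedly invoke the orbit relation at the correct base points so that the shifted orbits of $h(x)$, $h(\sigma_A^m(x))$, and $h(\sigma_A^{m+1}(x))$ are glued along the same $\sigma_B$-orbit. This is the same scheme as in \cite[Lemma~4.3]{MMPre2014}, and I would simply parallel that calculation.
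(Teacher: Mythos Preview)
Your approach is correct and matches the paper's own treatment: the paper does not give a standalone proof but simply remarks that the argument is the same as \cite[Lemma~4.3]{MMPre2014}, and your induction on $m$ using the cocycle identities $k_1^{m+1}(x)=k_1^m(x)+k_1(\sigma_A^m(x))$, $l_1^{m+1}(x)=l_1^m(x)+l_1(\sigma_A^m(x))$ together with the orbit relation \eqref{eq:norbiteqx} is exactly that calculation. Your identification of the left-hand side as $\sum_{i=0}^{m-1}\Psi_h(f)(\sigma_A^i(x))$ is also the right way to read the displayed formula (modulo the evident typos $\sigma_B^i(x)\to\sigma_A^i(x)$ and $h(\sigma_A^i(x))\to h(\sigma_A^{i+1}(x))$ in the second inner sum).
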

Let
$h: (X_A,\sigma_A) \rightarrow (X_B,\sigma_B)$
and
$g: (X_B,\sigma_B) \rightarrow (X_C,\sigma_C)$
be continuous orbit maps 
with continuous functions 
$k_1, l_1 \in C(X_A,\Z)$
and
$k_2, l_2 \in C(X_B,\Z)$
satisfying 
\eqref{eq:sBkh} and \eqref{eq:sCkg},
respectively.
We write the 
continuous orbit map 
$g\circ h:  X_A \rightarrow  X_C$
as $gh$.
We will prove the following proposition
\begin{proposition}\label{prop:compo} 
$\Psi_h \circ \Psi_g = \Psi_{g h}$.
\end{proposition}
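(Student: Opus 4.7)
The plan is to unfold $\Psi_h(\Psi_g(F))(x)$ using the definition, apply the telescoping identity of Lemma~\ref{lem:Am} (in the version for $g$ rather than $h$) to reduce the inner $\Psi_g$-values to $F$-sums along $\sigma_C$-orbits, and then match the resulting expression with $\Psi_{gh}(F)(x)$ computed from the formulas~\eqref{eq:k3} and~\eqref{eq:l3}. The key geometric inputs are the orbit relation~\eqref{eq:sBkh}, which forces the ``cross'' base points in $X_B$ to coincide, and the $g$-version of the cocycle identity~\eqref{eq:norbiteqx}, which realigns partial $\sigma_C$-sums onto a common orbit.

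Concretely, I would first write
\[
\Psi_h(\Psi_g(F))(x) = \sum_{i=0}^{l_1(x)-1}\Psi_g(F)(\sigma_B^i(h(x))) - \sum_{j=0}^{k_1(x)-1}\Psi_g(F)(\sigma_B^j(h(\sigma_A(x))))
\]
and apply the $g$-version of Lemma~\ref{lem:Am} to each outer sum (with $y=h(x)$, $m=l_1(x)$ in one case and $y=h(\sigma_A(x))$, $m=k_1(x)$ in the other). This produces four inner $F$-sums: two rooted at $g(h(x))$ and $g(h(\sigma_A(x)))$ with upper limits $l_2^{l_1(x)}(h(x))$ and $l_2^{k_1(x)}(h(\sigma_A(x)))$, and two rooted at $g(\sigma_B^{l_1(x)}(h(x)))$ and $g(\sigma_B^{k_1(x)}(h(\sigma_A(x))))$ with upper limits $k_2^{l_1(x)}(h(x))$ and $k_2^{k_1(x)}(h(\sigma_A(x)))$. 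By~\eqref{eq:sBkh} the last two base points are both $g(w)$, where $w:=\sigma_B^{l_1(x)}(h(x))$.

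On the other side, $\Psi_{gh}(F)(x)$ has two $F$-sums with upper limits $l_3(x)$ and $k_3(x)$. I would split the $l_3$-sum at $l_2^{l_1(x)}(h(x))$ and the $k_3$-sum at $l_2^{k_1(x)}(h(\sigma_A(x)))$. After reindexing, the two tail pieces are $\sigma_C$-sums based at $\sigma_C^{l_2^{l_1(x)}(h(x))}(g(h(x)))$ and $\sigma_C^{l_2^{k_1(x)}(h(\sigma_A(x)))}(g(h(\sigma_A(x))))$; the cocycle identity~\eqref{eq:norbiteqx} applied to $g$, $\sigma_C^{l_2^n(y)}(g(y))=\sigma_C^{k_2^n(y)}(g(\sigma_B^n(y)))$, converts these into $\sigma_C$-sums based at $g(w)$ shifted by $\alpha:=k_2^{l_1(x)}(h(x))$ and $\beta:=k_2^{k_1(x)}(h(\sigma_A(x)))$ respectively.

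Setting $G(i):=F(\sigma_C^i(g(w)))$, subtracting the two expressions leaves exactly
\[
\Psi_{gh}(F)(x)-\Psi_h(\Psi_g(F))(x) = \Bigl(\sum_{j=0}^{\alpha-1}G(j)+\sum_{i=0}^{\beta-1}G(i+\alpha)\Bigr) - \Bigl(\sum_{j=0}^{\beta-1}G(j)+\sum_{i=0}^{\alpha-1}G(i+\beta)\Bigr),
\]
the two roots $g(h(x))$ and $g(h(\sigma_A(x)))$ having already cancelled. Both bracketed pairs equal $\sum_{i=0}^{\alpha+\beta-1}G(i)$, so the difference vanishes. The only real obstacle is the bookkeeping of the four superscripted bounds $l_2^{l_1(x)}(h(x))$, $k_2^{l_1(x)}(h(x))$, $l_2^{k_1(x)}(h(\sigma_A(x)))$, $k_2^{k_1(x)}(h(\sigma_A(x)))$ through the splits and reindexings; conceptually the proof is just Lemma~\ref{lem:Am} applied twice plus the two-interval-partition identity above.
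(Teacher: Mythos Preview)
Your proposal is correct and follows essentially the same route as the paper. The paper also applies the $g$-version of Lemma~\ref{lem:Am} (packaged as Corollary~\ref{cor:Psi}) to the two outer sums, splits at $l_2^{l_1(x)}(h(x))$ and $l_2^{k_1(x)}(h(\sigma_A(x)))$, uses \eqref{eq:sBkh} and the cocycle identity for $g$ to rebase the tail pieces at $g(w)$, and then observes that the two leftover blocks both telescope to $\sum_{j'=0}^{\alpha+\beta-1}F(\sigma_C^{j'}(g(w)))$; the only cosmetic difference is that the paper rewrites $\Psi_h(\Psi_g(f))$ directly as $\Psi_{gh}(f)$ plus cancelling terms, whereas you compute both sides and subtract.
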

To prove the proposition, 
we provide some notations and a lemma. 
Let
$
k_3, l_3:X_A \rightarrow \Zp
$
be the continuous functions defined by
\eqref{eq:k3}, \eqref{eq:l3}, respectively.
By Lemma \ref{lem:gh}, we have for $f \in C(X_C,\Z)$ 
\begin{equation}
\Psi_{gh}(f)(x)
= \sum_{i=0}^{l_3(x)-1} f(\sigma_C^i(gh(x))) 
- \sum_{j=0}^{k_3(x)-1} f(\sigma_C^j(gh(\sigma_A(x)))),
 \quad 
 x \in X_A. \label{eq:Psighfx}
\end{equation}
Keep the above situations.
By Lemma \ref{lem:Am},
we have
\begin{lemma} \label{lem:Cm}
For $f \in C(X_C,{\mathbb{Z}})$,
$x \in X_A$ and $m=1,2,\dots,$ 
the following equalities hold:
\begin{align*}
& \sum_{i=0}^{m-1} \{
\sum_{i'=0}^{l_2 (\sigma_B^i(h(x)))-1} f(\sigma_C^{i'}(g(\sigma_B^i(h(x)))))
-
\sum_{j'=0}^{k_2 (\sigma_B^i(h(x)))-1} f(\sigma_C^{j'}(g(\sigma_B^{i}(h(x))))) \}\\
=
&
\sum_{i'=0}^{l_2^m (h(x))-1} f(\sigma_C^{i'}(gh(x)))
-
\sum_{j'=0}^{k_2^m (h(x))-1} f(\sigma_C^{j'}(g(\sigma_B^{m}(h(x))))).
\end{align*}
\end{lemma}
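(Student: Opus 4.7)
My plan is to observe that Lemma \ref{lem:Cm} is not a new result but is Lemma \ref{lem:Am} applied to a different continuous orbit map at a different point. Specifically, Lemma \ref{lem:Am} was stated and proved for an arbitrary continuous orbit map between two one-sided topological Markov shifts, together with its associated continuous functions. In the present situation we have at hand the continuous orbit map $g:(X_B,\sigma_B)\rightarrow(X_C,\sigma_C)$ with its associated continuous functions $k_2,l_2\in C(X_B,\Z)$ satisfying \eqref{eq:sCkg}. Hence Lemma \ref{lem:Am} applies verbatim with the triple $(A,B,h)$ replaced by $(B,C,g)$, with $(k_1,l_1)$ replaced by $(k_2,l_2)$, and with $f\in C(X_C,\Z)$ taking the role of $f\in C(X_B,\Z)$.

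Concretely, I would write the conclusion of Lemma \ref{lem:Am} in that substituted form for an arbitrary point $y\in X_B$, giving an identity whose left-hand side is
\[
\sum_{i=0}^{m-1}\Bigl\{\sum_{i'=0}^{l_2(\sigma_B^i(y))-1}f(\sigma_C^{i'}(g(\sigma_B^i(y))))-\sum_{j'=0}^{k_2(\sigma_B^i(y))-1}f(\sigma_C^{j'}(g(\sigma_B^{i}(y))))\Bigr\}
\]
and whose right-hand side is
\[
\sum_{i'=0}^{l_2^m(y)-1}f(\sigma_C^{i'}(g(y)))-\sum_{j'=0}^{k_2^m(y)-1}f(\sigma_C^{j'}(g(\sigma_B^{m}(y)))).
\]
Then I specialize to the point $y=h(x)$ for $x\in X_A$. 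Since $g(h(x))=gh(x)$ by our notational convention, the right-hand side becomes exactly the right-hand side in the statement of Lemma \ref{lem:Cm}, and the left-hand side matches the displayed expression there.

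There is no genuine obstacle here: the content of Lemma \ref{lem:Cm} is nothing more than applying an already established identity to the orbit map $g$ at points of the form $h(x)$. The only thing to be careful about is the notational bookkeeping, in particular that the iterated cocycles $k_2^m(h(x))$ and $l_2^m(h(x))$ appearing in the right-hand side are the same iterated cocycles that govern the orbit-map behaviour of $g$ at the point $h(x)\in X_B$. Since $x\mapsto h(x)$ is a continuous map from $X_A$ into $X_B$, both sides of the resulting identity depend continuously on $x\in X_A$, and the equality is pointwise by the application of Lemma \ref{lem:Am} to each point $y=h(x)$.
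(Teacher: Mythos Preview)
Your proposal is correct and is exactly the approach the paper takes: the paper's entire proof of Lemma~\ref{lem:Cm} is the phrase ``By Lemma~\ref{lem:Am}, we have'', i.e.\ substitute $(B,C,g,k_2,l_2)$ for $(A,B,h,k_1,l_1)$ in Lemma~\ref{lem:Am} and evaluate at the point $y=h(x)\in X_B$. Your added remarks about notational bookkeeping and continuity are accurate but not needed for the argument.
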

Hence we have
\begin{corollary}\label{cor:Psi}
For $f \in C(X_C,{\mathbb{Z}})$ and $x \in X_A$, we have 
\begin{enumerate}
\renewcommand{\theenumi}{\roman{enumi}}
\renewcommand{\labelenumi}{\textup{(\theenumi)}}
\item
\begin{align*}
& \sum_{i=0}^{l_1(x)-1} \{
\sum_{i'=0}^{l_2 (\sigma_B^i(h(x)))-1} 
f(\sigma_C^{i'}(g(\sigma_B^i(h(x)))))
-
\sum_{j'=0}^{k_2 (\sigma_B^i(h(x)))-1} 
f(\sigma_C^{j'}(g(\sigma_B^{i}(h(x))))) \}\\
=
&
\sum_{i'=0}^{l_2^{l_1(x)} (h(x))-1} 
f(\sigma_C^{i'}(gh(x)))
-
\sum_{j'=0}^{k_2^{l_1(x)}(h(x))-1} 
f(\sigma_C^{j'}(g(\sigma_B^{l_1(x)}(h(x)))))
\end{align*}
so that 
\begin{align*}
& \sum_{i=0}^{l_1(x)-1} 
\Psi_g(f)(\sigma_B^i(h(x)) )\\
=
&
\sum_{i'=0}^{l_2^{l_1(x)} (h(x))-1} 
f(\sigma_C^{i'}(gh(x)))
-
\sum_{j'=0}^{k_2^{l_1(x)}(h(x))-1}  
f(\sigma_C^{j'}(g(\sigma_B^{l_1(x)}(h(x))))).
\end{align*}
\item
\begin{align*}
& \sum_{i=0}^{k_1(x)-1} \{
\sum_{i'=0}^{l_2 (\sigma_B^i(h(\sigma_A(x))))-1} 
f(\sigma_C^{i'}(g(\sigma_B^i(h(\sigma_A(x))))) )
-
\sum_{j'=0}^{k_2 (\sigma_B^i(h(\sigma_A(x)))-1} 
f(\sigma_C^{j'}(g(\sigma_B^i(h(\sigma_A(x)))))) \} \\
=
&
\sum_{i'=0}^{l_2^{k_1(x)} (h(\sigma_A(x)))-1} 
f(\sigma_C^{i'}(gh(\sigma_A(x))))
-
\sum_{j'=0}^{k_2^{k_1(x)} (h(\sigma_A(x)))-1}
 f(\sigma_C^{j'}(g(\sigma_B^{k_1(x)}(h(\sigma_A(x))))))
\end{align*}
so that 
\begin{align*}
& \sum_{j=0}^{k_1(x)-1} 
\Psi_g(f)(\sigma_B^j(h(\sigma_A(x)))) \\
=
&
\sum_{i'=0}^{l_2^{k_1(x)} (h(\sigma_A(x)))-1} 
f(\sigma_C^{i'}(gh(\sigma_A(x))))
-
\sum_{j'=0}^{k_2^{k_1(x)}(h(\sigma_A(x)))-1}  
f(\sigma_C^{j'}(g(\sigma_B^{k_1(x)}(h(\sigma_A(x)))))).
\end{align*}
\end{enumerate}
\end{corollary}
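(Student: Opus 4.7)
The plan is to derive both parts of Corollary~\ref{cor:Psi} as direct specializations of Lemma~\ref{lem:Cm}, followed by unfolding the definition \eqref{eq:Psihfx} of $\Psi_g$.

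For part~(i), I would take $m = l_1(x)$ in Lemma~\ref{lem:Cm}. The outer sum then ranges over $0 \le i \le l_1(x)-1$, and the right-hand side of the lemma produces $l_2^{l_1(x)}(h(x))$ and $k_2^{l_1(x)}(h(x))$ as upper summation limits with inner base point $\sigma_B^{l_1(x)}(h(x))$, which matches the first displayed equality of~(i) verbatim. The ``so that'' reformulation is obtained by observing that the summand inside the braces is, by \eqref{eq:Psihfx} applied to $\Psi_g$ at the point $y = \sigma_B^i(h(x)) \in X_B$, precisely $\Psi_g(f)(\sigma_B^i(h(x)))$; summing this pointwise identity over $i$ then rewrites the left-hand side as $\sum_{i=0}^{l_1(x)-1}\Psi_g(f)(\sigma_B^i(h(x)))$.

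For part~(ii), I would apply Lemma~\ref{lem:Cm} to the point $\sigma_A(x) \in X_A$ in place of $x$, with $m = k_1(x)$. This substitution carries $h(x)$ to $h(\sigma_A(x))$ throughout, produces the outer sum $\sum_{i=0}^{k_1(x)-1}$, and yields the right-hand side of the first display of~(ii) with inner base point $\sigma_B^{k_1(x)}(h(\sigma_A(x)))$ and upper limits $l_2^{k_1(x)}(h(\sigma_A(x)))$, $k_2^{k_1(x)}(h(\sigma_A(x)))$. The ``so that'' version then follows by the same identification of the bracketed summand with $\Psi_g(f)(\sigma_B^j(h(\sigma_A(x))))$ for $j = 0, \dots, k_1(x)-1$.

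Because Lemma~\ref{lem:Cm} has already been established and \eqref{eq:Psihfx} is just the definition of $\Psi_g$, there is no substantive obstacle here: the corollary is a purely mechanical specialization. The only bookkeeping point is to verify that the inner upper limits $l_2(\sigma_B^i(h(x)))$ and $k_2(\sigma_B^i(h(x)))$ appearing in Lemma~\ref{lem:Cm} are precisely the upper limits appearing in $\Psi_g(f)(y)$ when \eqref{eq:Psihfx} is unfolded at $y = \sigma_B^i(h(x))$, which is immediate from the definition. The genuine use of this corollary is deferred to Proposition~\ref{prop:compo}, where one would combine~(i) with~(ii), invoke the orbit identity $\sigma_B^{l_1(x)}(h(x)) = \sigma_B^{k_1(x)}(h(\sigma_A(x)))$ from~\eqref{eq:orbiteqx}, and cancel the two subtracted blocks against each other to produce $\Psi_{gh}(f)(x)$ via formula~\eqref{eq:Psighfx}.
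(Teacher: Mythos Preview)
Your proposal is correct and matches the paper's approach exactly: the paper derives Corollary~\ref{cor:Psi} with nothing more than the phrase ``Hence we have'' following Lemma~\ref{lem:Cm}, and you have simply spelled out the two substitutions ($m=l_1(x)$ for~(i), and $x\mapsto\sigma_A(x)$ with $m=k_1(x)$ for~(ii)) together with the identification of the bracketed summand as $\Psi_g(f)$ via~\eqref{eq:Psihfx}. Your closing remarks about Proposition~\ref{prop:compo} are accurate but extraneous to the corollary itself.
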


\medskip

{\it{(Proof of Proposition \ref{prop:compo})}}

 By the previous corollary,
we have 
\begin{align*}
  &\Psi_{h}(\Psi_g(f))(x) \\
=&  \sum_{i=0}^{l_1(x)-1} \Psi_g(f)(\sigma_B^i(h(x)) )
- \sum_{j=0}^{k_1(x)-1} \Psi_g(f)(\sigma_B^j(h(\sigma_A(x))) ) \\
=
&
\{ 
\sum_{i'=0}^{l_2^{l_1(x)} (h(x))-1} f(\sigma_C^{i'}(gh(x)))
-
\sum_{j'=0}^{k_2^{l_1(x)}(h(x))-1}  
f(\sigma_C^{j'}(g(\sigma_B^{l_1(x)}(h(x)))))
\} %\label{eq:circ1}
\\
-&
\{
\sum_{i'=0}^{l_2^{k_1(x)} (h(\sigma_A(x)))-1} 
f(\sigma_C^{i'}(gh(\sigma_A(x))))
-
\sum_{j'=0}^{k_2^{k_1(x)}(h(\sigma_A(x)))-1}  
f(\sigma_C^{j'}(g(\sigma_B^{k_1(x)}(h(\sigma_A(x))))) \}. %\label{eq:circ2}
\end{align*}
By \eqref{eq:l3}, 
the first $\{ \, \cdot \, \}$ above  goes to
\begin{align}
& \sum_{i'=0}^{l_3(x)-1} f(\sigma_C^{i'}(gh(x))) 
\label{eq:circ1A} \\
-
& \{ \sum_{i'=l_2^{l_1(x)}(h(x))}^{l_3(x)-1} 
f(\sigma_C^{i'}(gh(x))) 
+
 \sum_{j'=0}^{k_2^{l_1(x)}(h(x))-1}  
f(\sigma_C^{j'}(g(\sigma_B^{l_1(x)}(h(x))))) \}.
 \label{eq:circ1C}
\end{align}
By \eqref{eq:k3},
the second $\{ \, \cdot \, \}$ above goes to
\begin{align}
& \sum_{i'=0}^{k_3(x)-1} 
f(\sigma_C^{i'}(gh(\sigma_A(x)))) 
\label{eq:circ2A} \\
-
& \{
\sum_{i'=l_2^{k_1(x)}(h(\sigma_A(x)))}^{k_3(x)-1} 
f(\sigma_C^{i'}(gh(\sigma_A(x)))) 
+
\sum_{j'=0}^{k_2^{k_1(x)}(h(\sigma_A(x)))-1}  
f(\sigma_C^{j'}(g(\sigma_B^{k_1(x)}(h(\sigma_A(x)))))) \}.
\label{eq:circ2C}
\end{align}
We thus see
\begin{equation*}
\Psi_{h}(\Psi_g(f))(x)
 = \{ \eqref{eq:circ1A}-\eqref{eq:circ1C}\}
  - \{ \eqref{eq:circ2A} -\eqref{eq:circ2C}\}. 
%   =  \{ \eqref{eq:circ1A}-\eqref{eq:circ2A}\}  
% - \{  \eqref{eq:circ1C} -\eqref{eq:circ2C} \}
\end{equation*}
Since
%\begin{equation*}
$
\sigma_C^{l_2^{l_1(x)}(h(x))}(gh(x)) 
=\sigma_C^{k_2^{l_1(x)}(h(x))}(g(\sigma_B^{l_1(x)}(h(x)))),
$
%\end{equation*}
we  have
\begin{align*}
& \eqref{eq:circ1C} \\
=
& \sum_{j'=0}^{k_2^{k_1(x)}(h(\sigma_A(x)))-1} 
f(\sigma_C^{j'}(\sigma_C^{l_2^{l_1(x)}(h(x))}(gh(x)))) 
+
 \sum_{j'=0}^{k_2^{l_1(x)}(h(x))-1}  
f(\sigma_C^{j'}(g(\sigma_B^{l_1(x)}(h(x))))) \\
=
& \sum_{j'=0}^{k_2^{k_1(x)}(h(\sigma_A(x)))-1} 
f(\sigma_C^{j'}(
\sigma_C^{k_2^{l_1(x)}(h(x))}(g(\sigma_B^{l_1(x)}(h(x))))) \\ 
&+
 \sum_{j'=0}^{k_2^{l_1(x)}(h(x))-1}  
f(\sigma_C^{j'}(g(\sigma_B^{l_1(x)}(h(x))))) \\
=
&
 \sum_{j'=0}^{k_2^{l_1(x)}(h(x))+ k_2^{k_1(x)}(h(\sigma_A(x)))-1}  
f(\sigma_C^{j'}(g(\sigma_B^{l_1(x)}(h(x))))). 
\end{align*}
Since
%\begin{equation*}
$\sigma_C^{l_2^{k_1(x)}(h(\sigma_A(x)))}(gh(\sigma_A(x))) 
=
\sigma_C^{k_2^{k_1(x)}(h(\sigma_A(x)))}(g(\sigma_B^{k_1(x)}(h(\sigma_A(x))))),
$
%\end{equation*}
we  have
\begin{align*}
& \eqref{eq:circ2C} \\
=
& \sum_{j'=0}^{ k_2^{l_1(x)}(h(x))-1} 
f(\sigma_C^{j'}(\sigma_C^{l_2^{k_1(x)}(h(\sigma_A(x)))}(gh(\sigma_A(x))))) 
+
\sum_{j'=0}^{k_2^{k_1(x)}(h(\sigma_A(x)))-1}  
f(\sigma_C^{j'}(g(\sigma_B^{k_1(x)}(h(\sigma_A(x))))) \\
=
& \sum_{j'=0}^{ k_2^{l_1(x)}(h(x))-1} 
f(\sigma_C^{j'}(
\sigma_C^{k_2^{k_1(x)}(h(\sigma_A(x)))}(g(\sigma_B^{k_1(x)}(h(\sigma_A(x)))
) \\
&+
\sum_{j'=0}^{k_2^{k_1(x)}(h(\sigma_A(x)))-1}  
f(\sigma_C^{j'}(g(\sigma_B^{k_1(x)}(h(\sigma_A(x))))) \\
=
&
\sum_{j'=0}^{k_2^{k_1(x)}(h(\sigma_A(x))) +k_2^{l_1(x)}(h(x))-1}  
f(\sigma_C^{j'}(g(\sigma_B^{k_1(x)}(h(\sigma_A(x))))). 
\end{align*}
As
%\begin{equation*}
$\sigma_C^{j'}(g(\sigma_B^{k_1(x)}(h(\sigma_A(x))))
= \sigma_C^{j'}(g(\sigma_B^{l_1(x)}(h(x)))), 
$
%\end{equation*}
we have
%\begin{equation*}
$\eqref{eq:circ1C} 
=
\eqref{eq:circ2C} 
$
%\end{equation*}
so that
\begin{equation*}
\Psi_{h}(\Psi_g(f))(x)
= 
\eqref{eq:circ1A}-\eqref{eq:circ2A}
= \Psi_{gh}(f)(x).
\end{equation*}
\qed

\begin{corollary}[{\cite[Proposition 4.5]{MMPre2014}}]
Let $h:(X_A,\sigma_A)\rightarrow (X_B,\sigma_B)$
be a continuous orbit homeomorphism.
Then we have
$\Psi_h \circ \Psi_{h^{-1}}= \id_{C(X_A,\Z)}$
and
$\Psi_{h^{-1}} \circ \Psi_h= \id_{C(X_B,\Z)}$.
\end{corollary}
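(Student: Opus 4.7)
The plan is to derive the corollary directly from the composition formula $\Psi_h \circ \Psi_g = \Psi_{gh}$ proved in Proposition~\ref{prop:compo}. Applying that proposition with $g = h^{-1}:(X_B,\sigma_B)\rightarrow(X_A,\sigma_A)$ gives
\begin{equation*}
\Psi_h\circ\Psi_{h^{-1}} \;=\; \Psi_{h^{-1}\circ h} \;=\; \Psi_{\id_{X_A}},
\end{equation*}
and symmetrically $\Psi_{h^{-1}}\circ\Psi_h = \Psi_{\id_{X_B}}$. Thus the whole task reduces to the identity $\Psi_{\id_{X_A}} = \id_{C(X_A,\Z)}$ for any $(X_A,\sigma_A)$.

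To verify this identity, first I would observe that the identity map $\id_{X_A}$ is trivially a local homeomorphism, hence a legitimate continuous orbit map, and the orbit relation \eqref{eq:orbiteqx} reads $\sigma_A^{k_1(x)+1}(x) = \sigma_A^{l_1(x)}(x)$. The simplest cocycle functions satisfying this are the constants $k_1 \equiv 0$ and $l_1 \equiv 1$. By the lemma showing that $\Psi_h$ is independent of the choice of $k_1, l_1$, I am free to use these. Plugging them into \eqref{eq:Psihfx} collapses the two sums to
\begin{equation*}
\Psi_{\id_{X_A}}(f)(x) \;=\; \sum_{i=0}^{0} f(\sigma_A^i(x)) \;-\; \sum_{j=0}^{-1} f(\sigma_A^j(\sigma_A(x))) \;=\; f(x),
\end{equation*}
where the second (empty) sum is zero. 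Hence $\Psi_{\id_{X_A}}=\id_{C(X_A,\Z)}$, and the same computation on $X_B$ yields $\Psi_{\id_{X_B}}=\id_{C(X_B,\Z)}$.

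There is essentially no obstacle here once Proposition~\ref{prop:compo} is in hand; the only point that deserves care is the choice of cocycle pair for $\id_{X_A}$. A naive choice like $k_1 = l_1 \equiv 1$ does \emph{not} satisfy \eqref{eq:orbiteqx} in general (it would require $\sigma_A^2(x) = \sigma_A(x)$), so one must pick $l_1 = k_1+1$; the constants $(0,1)$ are the cleanest. The independence lemma is exactly what makes this sleight of hand legitimate, and together with Proposition~\ref{prop:compo} it delivers the corollary in a couple of lines.
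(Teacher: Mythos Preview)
Your proof is correct and in spirit follows the same idea as the paper---both invoke Proposition~\ref{prop:compo} with $g=h^{-1}$ to reduce to computing $\Psi_{\id_{X_A}}$. The difference lies in how that last computation is carried out. The paper works with the explicit cocycle pair $(k_3,l_3)$ that Lemma~\ref{lem:gh} assigns to the composition $h^{-1}\circ h$, and then appeals to the identity of Lemma~\ref{lem:klp} (equivalently \cite[Lemma~3.3]{MMPre2014}) to deduce $l_3(x)=k_3(x)+1$; from there the telescoping of the two sums in \eqref{eq:Psighfx} yields $f(x)$. You instead exploit the independence lemma to replace $(k_3,l_3)$ by the simplest admissible pair $(0,1)$ for $\id_{X_A}$, which makes the computation immediate. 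Your route is shorter and avoids invoking Lemma~\ref{lem:klp} altogether; the paper's route has the minor advantage of staying entirely inside the explicit formulas already developed in the proof of Proposition~\ref{prop:compo}, so no fresh choice of cocycle functions is needed.
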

\begin{proof}
Take $C$ as $A$ and $g$ as $h^{-1}$
in the preceding proposition.
%We have
%\begin{align*}
%k_3(x) & = k_2^{l_1(x)}(h(x)) + l_2^{k_1(x)}(h(\sigma_A(x)), \qquad x \in X_A, \\
%l_3(x) & =  l_2^{l_1(x)}(h(x)) + k_2^{k_1(x)}(h(\sigma_A(x)), \qquad x \in X_A.
%\end{align*}
Since by \cite[Lemma 3.3]{MMPre2014} or Lemma \ref{lem:klp},
\begin{equation*}
l_2^{l_1(x)}(h(x)) + k_2^{k_1(x)}(h(\sigma_A(x))) -1
=l_2^{k_1(x)}(h(\sigma_A(x))) + k_2^{l_1(x)}(h(x)),
\end{equation*}
we  have
%\begin{equation*}
$l_3(x) -1 = k_3(x)$
%\end{equation*}
so that for $f \in C(X_A,\Z)$
the equality
%\begin{equation*}
$
\eqref{eq:circ1A}-\eqref{eq:circ2A}
= f(x)
$
holds.
%\end{equation*}
Hence 
$(\Psi_h \circ \Psi_{h^{-1}})(f)(x) = f(x).$
%(i) For $f \in C(X_A,{\mathbb{Z}})$ and $x \in X_A$, we have
%\begin{align*}
%  &\Psi_{h^{-1}}(\Psi_h(f))(x) \\
%=&  \sum_{i=0}^{l_1(x)-1} \Psi_h(f)(\sigma_B^i(h(x)) 
%- \sum_{j=0}^{k_1(x)-1} \Psi_h(f)(\sigma_B^j(h(\sigma_A(x))) \\
%= & \sum_{i=0}^{l_1(x)-1} \{
%\sum_{i'=0}^{l_2 (\sigma_B^i(h(x)))-1} f(\sigma_A^{i'}(h^{-1}(\sigma_B^i(h(x))))) -
%\sum_{j'=0}^{k_2 (\sigma_B^i(h(x)))-1} f(\sigma_A^{j'}(h^{-1}(\sigma_B^{i+1}(h(x))))) \}\\ -
% &\sum_{j=0}^{k_1(x)-1} \{
%\sum_{i'=0}^{l_2 (\sigma_B^j(h(\sigma_A(x))))-1} f(\sigma_A^{i'}(h^{-1}(\sigma_B^j(h(\sigma_A(x))))))
%-\sum_{j'=0}^{k_2 (\sigma_B^j(h(\sigma_A(x)))-1} f(\sigma_A^{j'}(h^{-1}(\sigma_B^{j+1}(h(\sigma_A(x))))) \}
 %\end{align*}
Similarly we have
$
\Psi_{h^{-1}}\circ \Psi_{h} = \id_{C(X_B,\Z)}.
$
\end{proof}

\begin{lemma}
Let $h:(X_A,\sigma_A) \rightarrow (X_B,\sigma_B)$
be a continuous orbit map.
Then we have
\begin{equation} %\label{lem:cobdy}
\Psi_h(f- f \circ \sigma_B) = f \circ h - f \circ h \circ \sigma_A,
\qquad
f \in C(X_B,\Z). \label{eq:coboundary}
\end{equation}
\end{lemma}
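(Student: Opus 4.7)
The plan is to substitute $g := f - f\circ\sigma_B$ directly into the defining formula \eqref{eq:Psihfx} for $\Psi_h$ and exploit telescoping. Writing out
\begin{equation*}
\Psi_h(g)(x) = \sum_{i=0}^{l_1(x)-1} g(\sigma_B^i(h(x))) - \sum_{j=0}^{k_1(x)-1} g(\sigma_B^j(h(\sigma_A(x)))),
\end{equation*}
each term $g(\sigma_B^i(y)) = f(\sigma_B^i(y)) - f(\sigma_B^{i+1}(y))$ is the increment of a telescoping sequence in $i$.

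So the first step is to collapse both sums. The sum over $i$ becomes $f(h(x)) - f(\sigma_B^{l_1(x)}(h(x)))$, and the sum over $j$ becomes $f(h(\sigma_A(x))) - f(\sigma_B^{k_1(x)}(h(\sigma_A(x))))$. Thus
\begin{equation*}
\Psi_h(f - f\circ\sigma_B)(x) = f(h(x)) - f(\sigma_B^{l_1(x)}(h(x))) - f(h(\sigma_A(x))) + f(\sigma_B^{k_1(x)}(h(\sigma_A(x)))).
\end{equation*}

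The second step is to invoke the defining orbit equation \eqref{eq:orbiteqx}, namely $\sigma_B^{k_1(x)}(h(\sigma_A(x))) = \sigma_B^{l_1(x)}(h(x))$, which implies that the two middle terms cancel. What remains is exactly $f(h(x)) - f(h(\sigma_A(x))) = (f\circ h)(x) - (f\circ h\circ\sigma_A)(x)$, which is \eqref{eq:coboundary}.

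There is essentially no obstacle here; the argument is purely formal. The only mild subtlety is that the formula for $\Psi_h$ is not obviously independent of the choice of $(k_1,l_1)$, but that independence has been established earlier, so one may simply use any pair satisfying \eqref{eq:orbiteqx}. The content of the lemma is that $\Psi_h$ descends to a homomorphism $H^B \to H^A$ on cohomology, and the telescoping identity above is precisely what expresses this at the cochain level.
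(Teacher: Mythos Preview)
Your proof is correct and follows essentially the same approach as the paper: both substitute $f - f\circ\sigma_B$ into the definition of $\Psi_h$, telescope each sum, and then use the orbit relation \eqref{eq:orbiteqx} to cancel the two terms involving $\sigma_B^{l_1(x)}(h(x))$ and $\sigma_B^{k_1(x)}(h(\sigma_A(x)))$. Your added remark about independence from the choice of $(k_1,l_1)$ is not needed for the argument but is a reasonable observation.
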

\begin{proof}
Since
\begin{align*}
\sum_{i=0}^{l_1(x)-1} (f- f \circ \sigma_B)(\sigma_B^i(h(x))) 
& = f(h(x)) - f(\sigma_B^{l_1(x)}(h(x))), \\ 
\sum_{j=0}^{k_1(x)-1} (f- f \circ \sigma_B)(\sigma_B^j(h(\sigma_A(x))))
& =f(h(\sigma_A(x))) - f(\sigma_B^{k_1(x)}(h(\sigma_A(x)))),
\end{align*}
the equality
\eqref{eq:coboundary}
follows from
\eqref{eq:orbiteqx}.
\end{proof}
Therefore we have
\begin{proposition}
The homomorphism 
$\Psi_h : C(X_B,\Z) \rightarrow C(X_A,\Z)$
induces a homomorphism
of abelian groups
$\bar{\Psi}_h: H^B \rightarrow H^A$.
\end{proposition}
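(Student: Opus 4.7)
The plan is to reduce the statement to a single observation: that $\Psi_h$ carries the coboundary subgroup $\cobdy(\sigma_B) = \{f - f\circ\sigma_B \mid f\in C(X_B,\Z)\}$ into $\cobdy(\sigma_A) = \{\xi - \xi\circ\sigma_A \mid \xi\in C(X_A,\Z)\}$. Since $\Psi_h$ has already been shown to be a homomorphism of abelian groups at the level of cochains $C(X_B,\Z) \to C(X_A,\Z)$, this containment of coboundaries is the only additional fact required for $\Psi_h$ to descend to a well-defined group homomorphism $\bar{\Psi}_h : H^B \to H^A$ via $[f]\mapsto[\Psi_h(f)]$.

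The key input is the lemma immediately preceding the proposition, which asserts
\[
\Psi_h(f - f\circ\sigma_B) = f\circ h - (f\circ h)\circ\sigma_A
\qquad\text{for every } f \in C(X_B,\Z).
\]
Setting $\xi := f\circ h$, note that $\xi$ lies in $C(X_A,\Z)$ because $h:X_A\to X_B$ is continuous (being a local homeomorphism) and $f$ takes integer values continuously. Then the right-hand side is exactly $\xi - \xi\circ\sigma_A$, which is by definition an element of $\cobdy(\sigma_A)$. This establishes the inclusion $\Psi_h(\cobdy(\sigma_B)) \subseteq \cobdy(\sigma_A)$.

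With this inclusion the quotient map $\bar{\Psi}_h : H^B \to H^A$ defined by $[f]\mapsto[\Psi_h(f)]$ is unambiguous, and additivity is inherited verbatim from the additivity of $\Psi_h$ proved earlier. There is no genuine obstacle here: the preceding coboundary lemma already does all the work, and the present proposition is a formal corollary. The only conceptual point worth recording is that continuity of $h$ is essential for the composition $f\circ h$ to land in $C(X_A,\Z)$; the local-homeomorphism hypothesis in the definition of a continuous orbit map makes this automatic.
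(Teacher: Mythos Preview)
Your proposal is correct and follows exactly the paper's approach: the proposition is stated in the paper as an immediate consequence (``Therefore we have'') of the preceding coboundary lemma $\Psi_h(f - f\circ\sigma_B) = f\circ h - (f\circ h)\circ\sigma_A$, which shows $\Psi_h(\cobdy(\sigma_B))\subset\cobdy(\sigma_A)$ and hence that $\Psi_h$ descends to the quotients.
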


%%%%%%%%%%%%%%%%%%%

By a similar argument to the discusssions of \cite[Section 5]{MMPre2014},
one may prove that 
$\Psi_h$ preserves the positive cones of the ordered abelian groups,
that is,
$\bar{\Psi}_h(H^B_+) \subset H^A_+$.
%%%%%%%%%%%%%%%%%%%%%%%%%%%%%%%%%%%%%%%%%%
We will briefly state a machineray to prove 
$\bar{\Psi}_h(H^B_+) \subset H^A_+$.
Following 
\cite[Definition 5.5]{MMPre2014},
an eventually periodic point $x \in X_A$
is said to be $(r,s)$-{\it attracting} 
for some $r,s \in \Zp$
if it satisfies the following two conditions:
\begin{enumerate}
\renewcommand{\labelenumi}{(\roman{enumi})}
\item
$\sigma_A^r(x) = \sigma_A^s(x).$ 
\item
For any clopen neighborhood $W \subset X_A$ of $x$,
there exist clopen sets $U, V \subset X_A$ and a homeomorphism
$\varphi: V \rightarrow U$
 such that
{\begin{enumerate}
\item $x \in U \subset V \subset W$.
\item $\varphi(x) = x$.
\item $\sigma_A^r(\varphi(w)) = \sigma_A^s(w)$ for all $w \in V$. 
\item $\lim_{n \to\infty} \varphi^n(w) = x$  for all $w \in V$. 
\end{enumerate}}
\end{enumerate}
Let $x\in X_A$ be an eventually periodic point.
By \cite[Lemma 5.6 and Lemma 5.7]{MMPre2014},
there exists $r, s \in \Zp$ such that 
$x$ is (r,s)-attracting and hence
$\sigma_A^r(x) = \sigma_A^s(x)$, $r >s$.
Since $h:X_A \rightarrow X_B$ is a continuous orbit map,
by using a similar argument to \cite[Lemma 5.8 and Corollary 5.9]{MMPre2014},
one may show that 
$h(x)$ is 
$(l_1^r(x) +k_1^s(x),   k_1^r(x) + l_1^s(x) )$-attracting,
and
hence
$l_1^r(x) +k_1^s(x) > k_1^r(x) + l_1^s(x)$.
Put
$r' = l_1^{q}(\sigma_A^s(x)), s' = k_1^{q}(\sigma_A^s(x))$
where $q = r-s$
and
$z = \sigma_B^{l_1^s(x) +k_1^s(x)}(h(x)) \in X_B$.
By \cite[Lemma 5.3]{MMPre2014}, we then have 
\begin{equation*}
r' - s' =  
(l_1^r(x) - l_1^s(x)) -( k_1^r(x)- k_1^s(x)) >0
\quad
\text{ and }
\quad 
\sigma_B^{r'}(z) 
= 
\sigma_B^{s'}(z).
\end{equation*} 
We set for $f \in C(X_A,\Z)$
\begin{equation*}
\omega_f^{r,s}(x) = 
\sum_{i=0}^{r-1} f(\sigma_A^i(x)) -\sum_{j=0}^{s-1} f(\sigma_A^j(x)).
\label{eq:omegafrs}
\end{equation*}
Then $[f]$ belongs to $H^A_+$ 
if and only if 
$\omega_f^{r,s}(x)>0$
(\cite[Lemma 3.2]{MMKyoto}, \cite[Lemma 5.2]{MMPre2014}).
By \cite[Lemma 5.3]{MMPre2014}, we have
\begin{equation*}
\omega_{\Psi_h(f)}^{r,s}(x)
 = 
\omega_f^{r',s'}(z) \quad \text{ for } \quad f \in C(X_B,\Z). 
\label{eq:oPhfx}
\end{equation*}
Hence  $[f]\in H^B_+$ implies 
$\omega_{\Psi_h(f)}^{r,s}(x)
 = 
\omega_f^{r',s'}(z) >0$
so that
$[\Psi_h(f)] $ belongs to $H^A_+$.
%%%%%%%%%%%%%%%%%%%%%%%%%%%%%%

We thus conclude
\begin{theorem}\label{thm:contravariant}
 Let 
$h:(X_A,\sigma_A) \rightarrow (X_B,\sigma_B)$
and
$g:(X_B,\sigma_B) \rightarrow (X_C,\sigma_C)$
be continuous orbit maps.
 Then the homomorphisms
\begin{equation*}
\Psi_h:C(X_B,\Z) \rightarrow C(X_A,\Z),
\qquad 
\Psi_g:C(X_C,\Z) \rightarrow C(X_B,\Z)
\end{equation*}
satisfy the following conditions:
\begin{enumerate}
\renewcommand{\theenumi}{\roman{enumi}}
\renewcommand{\labelenumi}{\textup{(\theenumi)}}
\item
$
\Psi_h \circ \Psi_g = \Psi_{g \circ h}.
$
%\item
%$\Psi_{\id} = \id.$
\item
$\Psi_h(\cobdy(\sigma_B)) \subset \cobdy(\sigma_A)$
and
$\Psi_g(\cobdy(\sigma_C)) \subset \cobdy(\sigma_B).$
\item
They induce homomorphisms 
$\bar{\Psi}_h: (H^B,H^B_+)\rightarrow (H^A,H^A_+)$ 
and  
$\bar{\Psi}_g : (H^C,H^C_+)\rightarrow (H^B,H^B_+)$
of ordered abelian groups such that
$
\bar{\Psi}_h \circ \bar{\Psi}_g = \bar{\Psi}_{g \circ h}.
$
\end{enumerate}
\end{theorem}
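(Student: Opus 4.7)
The plan is to assemble the three assertions from results already established in the section, so the argument is largely bookkeeping once the composition formula and the coboundary identity are in hand. For part (i), I would simply invoke Proposition \ref{prop:compo}, which already gives $\Psi_h \circ \Psi_g = \Psi_{g\circ h}$ by the explicit cocycle computation carried out there. For part (ii), the key observation is a one-line telescoping: if $f = \xi - \xi \circ \sigma_B$ with $\xi \in C(X_B,\Z)$, then the two sums in the definition \eqref{eq:Psihfx} of $\Psi_h(f)$ collapse to
\begin{equation*}
\Psi_h(f)(x) = \xi(h(x)) - \xi(\sigma_B^{l_1(x)}(h(x))) - \xi(h(\sigma_A(x))) + \xi(\sigma_B^{k_1(x)}(h(\sigma_A(x)))),
\end{equation*}
and the two middle terms cancel by the orbit relation \eqref{eq:orbiteqx}, leaving $(\xi\circ h) - (\xi\circ h)\circ\sigma_A \in \cobdy(\sigma_A)$. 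This is precisely the content of \eqref{eq:coboundary}, so (ii) is immediate, and the analogous statement for $\Psi_g$ is identical.

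For part (iii), the existence of the induced homomorphisms $\bar{\Psi}_h:H^B\to H^A$ and $\bar{\Psi}_g:H^C\to H^B$ follows from (ii), and the composition identity $\bar{\Psi}_h \circ \bar{\Psi}_g = \bar{\Psi}_{g\circ h}$ descends directly from (i) via the quotient maps. What remains, and is where I expect the real work, is to verify that each induced map preserves the positive cones, i.e.\ $\bar{\Psi}_h(H^B_+) \subset H^A_+$. Unlike the algebraic identities, this requires genuine dynamical information, and it is the main obstacle in the proof.

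For the positivity step, I would use the attracting-point criterion sketched in the paragraphs preceding the theorem: by the Boyle--Handelman-type characterization via \cite[Lemma 3.2]{MMKyoto} and \cite[Lemma 5.2]{MMPre2014}, one has $[f] \in H^A_+$ if and only if $\omega_f^{r,s}(x) > 0$ for every $(r,s)$-attracting eventually periodic point $x \in X_A$, and such points exist in sufficient supply by \cite[Lemmas 5.6, 5.7]{MMPre2014}. Given such an $x$, the point $h(x) \in X_B$ is itself $(l_1^r(x)+k_1^s(x),\, k_1^r(x)+l_1^s(x))$-attracting in $X_B$, by the transfer argument of \cite[Lemma 5.8 and Corollary 5.9]{MMPre2014} adapted to continuous orbit maps. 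Setting $q = r-s$, $r' = l_1^q(\sigma_A^s(x))$, $s' = k_1^q(\sigma_A^s(x))$, and $z = \sigma_B^{l_1^s(x)+k_1^s(x)}(h(x))$, one verifies $r' > s'$ and $\sigma_B^{r'}(z) = \sigma_B^{s'}(z)$, so that $z$ is a periodic point for $\sigma_B$ at which the cone condition in $H^B_+$ can be tested. The decisive identity $\omega_{\Psi_h(f)}^{r,s}(x) = \omega_f^{r',s'}(z)$ is then a direct manipulation of the nested sums defining $\Psi_h(f)$, again using Lemma \ref{lem:Am} and \cite[Lemma 5.3]{MMPre2014}. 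Given this identity, positivity of $[f]\in H^B_+$ on the attracting point $z$ yields positivity of $[\Psi_h(f)]$ on $x$, and since $x$ was an arbitrary attracting point of $X_A$, one concludes $\bar{\Psi}_h(H^B_+) \subset H^A_+$. The same argument applied to $g$ finishes (iii).
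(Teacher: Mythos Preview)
Your proposal is correct and follows essentially the same route as the paper: the theorem is stated there as ``We thus conclude'' after the preceding results, so parts (i) and (ii) are exactly Proposition~\ref{prop:compo} and the coboundary lemma~\eqref{eq:coboundary}, and the positivity in (iii) is precisely the attracting-point transfer sketched in the paragraphs immediately before the theorem, with the same auxiliary quantities $r',s',z$ and the identity $\omega_{\Psi_h(f)}^{r,s}(x)=\omega_f^{r',s'}(z)$ taken from \cite[Lemma~5.3]{MMPre2014}. The only cosmetic difference is that you spell out the telescoping for (ii) inline, whereas the paper isolates it as a lemma.
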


\begin{corollary}
The correspondence $\bar{\Psi}$ gives rise to a contravariant functor from the category
$\mathcal{C}_{\COE}$ of the continuous orbit equivalence classes
of one-sided topological Markov shifts
with continuous orbit maps as morphisms
to the category $\mathcal{A}_+$ of ordered abelian groups:
\begin{equation}
[(X_A,\sigma_A)] \in \mathcal{C}_{\COE}
\rightarrow 
(H^A,H^A_+) \in \mathcal{A}_+.
\end{equation}
\end{corollary}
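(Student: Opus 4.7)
The corollary merely packages Theorem \ref{thm:contravariant} in the language of functors, so the plan is essentially a verification: I will specify $\bar{\Psi}$ on objects and morphisms and check the functor axioms, invoking the preceding theorem at each step.

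On objects I will send $[(X_A,\sigma_A)]$ to the ordered abelian group $(H^A, H^A_+)$. To see this is well defined on the equivalence class, I will invoke the Corollary following Proposition \ref{prop:compo} together with clause (iii) of Theorem \ref{thm:contravariant}: any continuous orbit homeomorphism $h$ between two representatives induces mutually inverse maps $\bar{\Psi}_h$ and $\bar{\Psi}_{h^{-1}}$, and each is an order homomorphism, so together they provide an order isomorphism between the two candidate ordered cohomology groups. On morphisms I will send a continuous orbit map $h: (X_A,\sigma_A) \rightarrow (X_B,\sigma_B)$ to the induced order homomorphism $\bar{\Psi}_h: (H^B, H^B_+) \rightarrow (H^A, H^A_+)$; existence and order preservation are guaranteed by Theorem \ref{thm:contravariant}(iii). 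Since the domain and codomain of $\bar{\Psi}_h$ are interchanged relative to those of $h$, this assignment reverses arrows, so contravariance is built in.

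The composition axiom $\bar{\Psi}_h \circ \bar{\Psi}_g = \bar{\Psi}_{g \circ h}$ is exactly clause (iii) of Theorem \ref{thm:contravariant}. For the identity axiom I will apply the defining formula \eqref{eq:Psihfx} to $h = \id_{X_A}$ with the choice $k_1 \equiv 0$ and $l_1 \equiv 1$, which satisfies the orbit relation \eqref{eq:orbiteqx} trivially; the two sums collapse to $\Psi_{\id_{X_A}}(f)(x) = f(x)$, so $\Psi_{\id_{X_A}} = \id_{C(X_A,\Z)}$ and hence $\bar{\Psi}_{\id_{X_A}} = \id_{(H^A, H^A_+)}$.

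I anticipate no genuine obstacle, since every needed fact has already been proved in Theorem \ref{thm:contravariant} and in the corollary following Proposition \ref{prop:compo}. The only point that demands even a moment of care is the well-definedness of $\bar{\Psi}$ on equivalence classes of objects, which, as noted above, is immediate once one observes that $\bar{\Psi}$ carries a continuous orbit homeomorphism to an order isomorphism.
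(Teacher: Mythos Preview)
Your proposal is correct and aligns with the paper's approach: the paper states this corollary without proof, treating it as an immediate consequence of Theorem~\ref{thm:contravariant}, and your verification simply spells out the functor axioms that this theorem supplies. The only point you add beyond the paper is the explicit check of the identity axiom and of well-definedness on objects, both of which are straightforward and handled exactly as you describe.
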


\section{Strongly continuous orbit equivalence}

\begin{definition}\label{defn:storbitmap}
A continuous orbit map  
$h:(X_A,\sigma_A) \rightarrow (X_B,\sigma_B)$ 
is called a {\it strongly continious orbit map\/}
if
there exists a continuous function
$b_1: X_A \rightarrow \Z$
such that 
\begin{equation}
\Psi_h(1_B)(x) = 1 + b_1(x) - b_1(\sigma_A(x)), \qquad x \in X_A.
\label{eq:b1}
\end{equation}
\end{definition}
For a nonnegative integer $N_1$, 
the function $b'_1(x) = b_1(x) +N_1$
still satisfies the above equality.
One may assume that the funtion
$b_1$ in \eqref{eq:b1} is nonnegative.
If a continuous orbit homeomorphism 
is a strongly continuous orbit map,
it is called a {\it strongly continuous orbit homeomorphism\/}.
\begin{definition}\label{defn:storbiteq}
One-sided topological Markov shifts
 $(X_A, \sigma_A)$ and $(X_B,\sigma_B)$
 are said to be 
{\it strongly continuous orbit equivalent\/} 
 if  
 there exists a 
strongly continuous orbit homeomorphism 
$h:(X_A, \sigma_A) \rightarrow (X_B, \sigma_B)$ 
such that its inverse
$h^{-1}:(X_B, \sigma_B) \rightarrow (X_A, \sigma_A)$
is also  
 a strongly continuous orbit homeomorphism. 
In this case, we write 
$(X_A, \sigma_A)\underset{\SCOE}{\sim}(X_B,\sigma_B)$.
\end{definition}

By definition,
 $(X_A, \sigma_A)$ and $(X_B,\sigma_B)$
 are  
strongly continuous orbit equivalent 
 if  and only if 
 there exist a  homeomorphism 
$h:X_A \rightarrow X_B$, 
 continuous functions 
$
k_1,l_1, b_1:X_A\rightarrow \Zp
$
and
$ k_2,l_2, b_2:X_B \rightarrow \Zp
$
such that
\begin{align}
\sigma_B^{k_1(x)} (h(\sigma_A(x))) 
& = \sigma_B^{l_1(x)}(h(x)),
\qquad 
x \in X_A,  \label{eq:scoe1}\\
\sigma_A^{k_2(y)} (h^{-1}(\sigma_B(y))) 
& = \sigma_A^{l_2(y)}(h^{-1}(y)),
\qquad 
%\text{ for }  
y \in X_B, \label{eq:scoe2}\\
\intertext{and}
l_1(x) - k_1(x) 
= & 1 + b_1(x) - b_1(\sigma_A(x)), \qquad x \in X_A,\label{eq:scoe3} \\
l_2(y) - k_2(y) 
= & 1 + b_2(y) - b_2(\sigma_B(y)), \qquad y \in X_B.\label{eq:scoe4} 
\end{align}
Recall that the cocycle functions $c_1, c_2$ are defined by 
$c_1(x) =  l_1(x) - k_1(x)$ for $x \in X_A$ 
and
$c_2(x) =  l_2(y) - k_2(y)$ for $y \in X_B$.
\begin{lemma}
Suppose that $h:X_A \rightarrow X_B$
is a homeomorphism which gives rise to a continuous orbit equivalence
between $(X_A,\sigma_A)$ and $(X_B,\sigma_B)$.
Then the following three conditions are equivalent:
\begin{enumerate}
\renewcommand{\theenumi}{\roman{enumi}}
\renewcommand{\labelenumi}{\textup{(\theenumi)}}
\item
$(X_A, \sigma_A)\underset{\SCOE}{\sim}(X_B,\sigma_B)$.
\item
$[c_1] = [1_A] \in H^A$.
\item
$[c_2] = [1_B] \in H^B$.
\end{enumerate} 
\end{lemma}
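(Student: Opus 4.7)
The plan is to first reinterpret the defining condition of a strongly continuous orbit map in cohomological language, and then use the functorial invertibility of $\bar{\Psi}_h$ established earlier to pass between $H^A$ and $H^B$.

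The starting observation is that evaluating $\Psi_h$ on the constant function $1_B$ gives $\Psi_h(1_B)(x) = l_1(x) - k_1(x) = c_1(x)$ directly from \eqref{eq:Psihfx}, and similarly $\Psi_{h^{-1}}(1_A)(y) = c_2(y)$. Thus, by Definition \ref{defn:storbitmap}, $h$ is a strongly continuous orbit map if and only if $c_1 - 1_A \in \cobdy(\sigma_A)$, i.e.\ $[c_1] = [1_A]$ in $H^A$; and $h^{-1}$ is a strongly continuous orbit map if and only if $[c_2] = [1_B]$ in $H^B$. Hence (i) is by definition the conjunction of (ii) and (iii), and the task reduces to proving (ii)$\Leftrightarrow$(iii).

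For this equivalence I would invoke the corollary to Proposition \ref{prop:compo} (the composition law $\Psi_h \circ \Psi_g = \Psi_{gh}$ applied to $g = h^{-1}$), which yields $\Psi_h \circ \Psi_{h^{-1}} = \id_{C(X_A,\Z)}$ and $\Psi_{h^{-1}} \circ \Psi_h = \id_{C(X_B,\Z)}$. Passing to the quotient by coboundaries, the induced maps $\bar{\Psi}_h : H^B \to H^A$ and $\bar{\Psi}_{h^{-1}} : H^A \to H^B$ are mutually inverse isomorphisms of abelian groups. Since $\bar{\Psi}_h([1_B]) = [c_1]$ and $\bar{\Psi}_{h^{-1}}([1_A]) = [c_2]$, assuming (ii) gives $\bar{\Psi}_h([1_B]) = [1_A]$, and applying the inverse $\bar{\Psi}_{h^{-1}}$ yields $[1_B] = \bar{\Psi}_{h^{-1}}([1_A]) = [c_2]$, which is (iii); the converse direction is symmetric.

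There is essentially no obstacle here beyond checking that the earlier cohomological machinery applies verbatim: the only subtlety worth spelling out is that the continuous functions $k_i, l_i$ provided by the continuous orbit equivalence are the same data used to compute $c_i$ and $\Psi$, so one does not need to worry about independence from the choice of orbit cocycle (this is already guaranteed by the uniqueness lemma for $c_1$ and for $\Psi_h$ on $H^A$). Once the identification $\Psi_h(1_B) = c_1$ is observed, the proof is a one-line application of the functor $\bar{\Psi}$.
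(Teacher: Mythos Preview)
Your proposal is correct and follows essentially the same route as the paper's proof: both identify $c_1=\Psi_h(1_B)$, $c_2=\Psi_{h^{-1}}(1_A)$, invoke the inversion identity $\Psi_{h^{-1}}\circ\Psi_h=\id$ from the corollary to Proposition~\ref{prop:compo}, and use that $\Psi_{h^{-1}}$ sends coboundaries to coboundaries to transfer the relation $[c_1]=[1_A]$ to $[c_2]=[1_B]$. The only cosmetic difference is that the paper carries out the computation at the function level (writing $c_2=\Psi_{h^{-1}}(c_1-b_1+b_1\circ\sigma_A)=1_B-(\Psi_{h^{-1}}(b_1)-\Psi_{h^{-1}}(b_1)\circ\sigma_B)$ explicitly), whereas you pass immediately to the induced isomorphism $\bar{\Psi}_{h^{-1}}$ on cohomology; these are the same argument.
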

\begin{proof}
(ii) $\Rightarrow$ (iii).
Suppose that $[c_1] = [1_A] \in H^A$.
Take a  continuous function 
$b_1 \in C(X_A, \Zp) 
$
such that
$c_1(x)=  1_A(x) + b_1(x) - b_1(\sigma_A(x)),  x \in X_A.
$
Since
$c_1 = \Psi_h(1_B), c_2 = \Psi_{h^{-1}}
(1_A),
$
 we have
$\Psi_{h^{-1}}(c_1) =\Psi_{h^{-1}}(\Psi_h(1_B)) = 1_B$
so that  
\begin{equation*}
c_2 
 = \Psi_{h^{-1}}(c_1 - b_1 + b_1 \circ \sigma_A) 
 = 1_B - \{ \Psi_{h^{-1}}(b_1)  - \Psi_{h^{-1}}(b_1) \circ \sigma_B \}.
\end{equation*}
This implies that
$[c_2] = [1_B] \in H^B$.
(iii) $\Rightarrow$ (ii) is similar.
It is clear that (i) is equivalent to the both conditions (ii) and (iii). 
\end{proof}
Therefore we have
\begin{proposition}
Strongly continuous orbit equivalence is an equivalence relation
in the set of one-sided topological Markov shifts.
\end{proposition}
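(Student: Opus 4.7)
The plan is to verify the three defining properties of an equivalence relation (reflexivity, symmetry, transitivity), using the functoriality of $\Psi$ already established in Theorem \ref{thm:contravariant} together with the preceding lemma characterizing SCOE in terms of the class $[c_1] = [1_A]$ in $H^A$.

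First I would verify reflexivity: for the identity homeomorphism $\id_{X_A}$, one can take $k_1 \equiv 0$ and $l_1 \equiv 1$, so that the defining equation \eqref{eq:orbiteqx} becomes the tautology $\sigma_A(x) = \sigma_A(x)$. Then $\Psi_{\id}(1_A) = 1_A$ directly from \eqref{eq:Psihfx}, so trivially $[c_1] = [1_A] \in H^A$, and hence $\id_{X_A}$ is a strongly continuous orbit homeomorphism. Symmetry is essentially built into Definition \ref{defn:storbiteq}, which requires both $h$ and $h^{-1}$ to be strongly continuous orbit maps: if $h$ witnesses $(X_A,\sigma_A) \underset{\SCOE}{\sim} (X_B,\sigma_B)$, then $h^{-1}$ witnesses the reverse.

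The substantive step is transitivity. Suppose $h:(X_A,\sigma_A) \to (X_B,\sigma_B)$ and $g:(X_B,\sigma_B) \to (X_C,\sigma_C)$ are strongly continuous orbit homeomorphisms. The composition $g \circ h$ is a homeomorphism, and by Lemma \ref{lem:gh} it is a continuous orbit map. By Proposition \ref{prop:compo} the cocycle function for $gh$ satisfies
\begin{equation*}
\Psi_{gh}(1_C) = \Psi_h(\Psi_g(1_C)).
\end{equation*}
By hypothesis $[\Psi_g(1_C)] = [1_B] \in H^B$, i.e.\ $\Psi_g(1_C) - 1_B \in \cobdy(\sigma_B)$. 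Applying Theorem \ref{thm:contravariant}(ii) (which says $\Psi_h$ sends coboundaries to coboundaries), we obtain $\Psi_h(\Psi_g(1_C)) - \Psi_h(1_B) \in \cobdy(\sigma_A)$. Combined with the hypothesis $[\Psi_h(1_B)] = [1_A] \in H^A$, this yields $[\Psi_{gh}(1_C)] = [1_A] \in H^A$, so by the preceding lemma $g \circ h$ is a strongly continuous orbit homeomorphism. The same argument applied to $h^{-1}$ and $g^{-1}$ shows that $(g \circ h)^{-1} = h^{-1} \circ g^{-1}$ is also a strongly continuous orbit homeomorphism.

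The only mild subtlety is that Definition \ref{defn:storbitmap} requires the function $b_1$ witnessing $\Psi_h(1_B) = 1 + b_1 - b_1 \circ \sigma_A$ to take values in $\Z$ (with a nonnegative representative available by adding a constant), but the preceding lemma has already repackaged this condition as the purely cohomological equality $[c_1] = [1_A]$ in $H^A$, so no explicit cocycle bookkeeping is needed and transitivity reduces to the functorial argument above. I do not expect any real obstacle: once the cohomological reformulation is in hand, the proof is a two-line computation in $H^A$.
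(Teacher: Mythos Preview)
Your proposal is correct and follows essentially the same approach as the paper: the paper's proof also reduces transitivity to the functoriality $\Psi_{gh}=\Psi_h\circ\Psi_g$ together with the cohomological characterization $[c_1]=[1_A]$ from the preceding lemma, arriving at $[c_{AC}]=[\Psi_h(1_B)]=[1_A]$. You are somewhat more explicit than the paper (which treats only transitivity and leaves reflexivity, symmetry, and the inverse direction implicit), but the substance is identical.
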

\begin{proof}
Let
$h:(X_A,\sigma_A) \rightarrow (X_B,\sigma_B)$
and
$g:(X_B,\sigma_B) \rightarrow (X_C,\sigma_C)$
be strongly continuous orbit homeomorphisms.
Put
$c_{AB} = \Psi_h(1_B), c_{BC} =\Psi_g(1_C)$
the cocycle functions for $h$ and $g$ respectively.
By the previous lemma,
we know that the composition 
$g \circ h: X_A \rightarrow X_C$ yields 
a continuous orbit equivalence between
$(X_A,\sigma_A)$ and $(X_C,\sigma_C)$.
Put
$c_{AC} = \Psi_{g\circ h}(1_C)$
so that
$c_{AC} = \Psi_h(\Psi_g(1_C))= \Psi_h(c_{BC})$.
Since
$[c_{AB}] = [1_A], [c_{BC}] = [1_B]$,
one has 
$[c_{AC}] = [\Psi_h(1_B)] = [c_{AB}] =[1_A]$
so that
$(X_A, \sigma_A)\underset{\SCOE}{\sim}(X_C,\sigma_C)$.
\end{proof}

\begin{proposition}\label{prop:SCOEcategory}
The objects $\mathcal{O}_{\SCOE}$ 
of strongly  continuous orbit equivalence classes
of one-sided topological Markov shifts
with the morphisms $\mathcal{M}_{\SCOE}$
 of strongly continuous orbit maps
form a category 
$\mathcal{C}_{\SCOE}=(\mathcal{O}_{\SCOE}, \mathcal{M}_{\SCOE})$.  
\end{proposition}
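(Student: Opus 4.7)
The plan is to reduce this to two checks on top of the already-established category $\mathcal{C}_{\COE}$ of Proposition~\ref{prop:COEcategory}: namely, that the class of strongly continuous orbit maps contains the identities and is closed under composition. Associativity and the unit laws are then inherited from $\mathcal{C}_{\COE}$ since composition in $\mathcal{C}_{\SCOE}$ is set-theoretic composition of local homeomorphisms.

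For identities, I would simply note that for any $A$, the identity map $\id:(X_A,\sigma_A)\to(X_A,\sigma_A)$ is a continuous orbit map with $k_1\equiv 0$, $l_1\equiv 1$, so $\Psi_{\id}(1_A)=1_A$, and the defining equation \eqref{eq:b1} holds with $b_1\equiv 0$. Hence $\id$ is a strongly continuous orbit map.

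For closure under composition, let $h:(X_A,\sigma_A)\to(X_B,\sigma_B)$ and $g:(X_B,\sigma_B)\to(X_C,\sigma_C)$ be strongly continuous orbit maps, with witnessing functions $b_1\in C(X_A,\Z)$ and $b_2\in C(X_B,\Z)$ satisfying
\begin{equation*}
\Psi_h(1_B) = 1_A + b_1 - b_1\circ\sigma_A,
\qquad
\Psi_g(1_C) = 1_B + b_2 - b_2\circ\sigma_B.
\end{equation*}
By Lemma \ref{lem:gh}, $g\circ h$ is a continuous orbit map, and by Proposition \ref{prop:compo}, $\Psi_{gh}=\Psi_h\circ\Psi_g$. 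Applying $\Psi_h$ to the expression for $\Psi_g(1_C)$ and using the coboundary identity \eqref{eq:coboundary} to handle the term $\Psi_h(b_2 - b_2\circ\sigma_B)=b_2\circ h - b_2\circ h\circ\sigma_A$, I get
\begin{equation*}
\Psi_{gh}(1_C) = \Psi_h(1_B) + b_2\circ h - b_2\circ h\circ\sigma_A
 = 1_A + b_3 - b_3\circ\sigma_A,
\end{equation*}
where $b_3 := b_1 + b_2\circ h \in C(X_A,\Z)$. This exhibits $g\circ h$ as a strongly continuous orbit map.

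The argument is largely bookkeeping on top of the functoriality of $\Psi$ already proven in Theorem \ref{thm:contravariant}; I do not foresee a real obstacle. The only point worth a moment's care is verifying that the $b_3$ produced lies in $C(X_A,\Z)$ (immediate from continuity of $b_2$ and $h$) and, if one wishes the witnessing function to be nonnegative (as remarked after Definition \ref{defn:storbitmap}), adding a suitable constant. The fact that strongly continuous orbit equivalence is already known to be an equivalence relation on objects makes the object class $\mathcal{O}_{\SCOE}$ well defined, so once the two checks above are made the category axioms follow automatically.
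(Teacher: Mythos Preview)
Your proposal is correct and is essentially the argument the paper has in mind: the paper states Proposition~\ref{prop:SCOEcategory} without proof, but it is meant to follow immediately from the computation in the proof of the preceding proposition (transitivity of $\underset{\SCOE}{\sim}$), which shows $[\Psi_{gh}(1_C)]=[\Psi_h(\Psi_g(1_C))]=[\Psi_h(1_B)]=[1_A]$ in $H^A$. Your version simply makes the witness explicit via \eqref{eq:coboundary}, producing $b_3=b_1+b_2\circ h$, which is a harmless refinement of the same idea.
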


%%%%%%%%%%%%%%%%%%%%%%%%%%%%%%%%%%%%%%%%%%%%%%%%%%%
%%%%%%%%%%%%%%%%%%%%%%%%%%%%%%%%%%%%%%%%%%%%%%%%%%%
\section{Two-sided conjugacy} 
%%%%%%%%%%%%%%%%%%%%%%%%%%%%%%%%%%
Throughout this section,
we assume that 
$h:X_A \rightarrow X_B$
is a homeomorphism which 
gives rise to a strongly continuous orbit equivalence
between $(X_A,\sigma_A)$ and $(X_B,\sigma_B)$.
Let 
$k_1, l_1, b_1 : X_A \rightarrow \Zp$
and
$k_2, l_2, b_2 : X_B \rightarrow \Zp$
be continuous functions 
satisfying 
\eqref{eq:scoe1}, \eqref{eq:scoe3} and \eqref{eq:scoe2}, \eqref{eq:scoe4},
respectively.
\begin{lemma}
Put
$
\varphi_{b_1}(x) = \sigma_B^{b_1(x)}(h(x))
$
for $x \in X_A$.
Then we have
\begin{equation}
\varphi_{b_1}(\sigma_A(x)) = \sigma_B(\varphi_{b_1}(x)), 
\qquad
x \in X_A.
\end{equation}
\end{lemma}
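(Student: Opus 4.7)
The plan is to derive the claimed intertwining
\[
\sigma_B^{b_1(\sigma_A(x))}(h(\sigma_A(x))) = \sigma_B^{b_1(x)+1}(h(x))
\]
directly from the orbit cocycle relation \eqref{eq:scoe1} and the coboundary identity \eqref{eq:scoe3}. The pivotal observation is that \eqref{eq:scoe3} rearranges to
\[
b_1(\sigma_A(x)) - k_1(x) \;=\; b_1(x)+1 - l_1(x),
\]
which says precisely that one single nonnegative shift of $\sigma_B$, applied to both sides of \eqref{eq:scoe1}, is exactly what turns the cocycle equation into the required identity.

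Concretely, under the assumption $b_1(\sigma_A(x)) \ge k_1(x)$, I would apply $\sigma_B^{b_1(\sigma_A(x))-k_1(x)}$ to both sides of \eqref{eq:scoe1}, producing
\[
\sigma_B^{b_1(\sigma_A(x))}(h(\sigma_A(x)))
= \sigma_B^{b_1(\sigma_A(x))+l_1(x)-k_1(x)}(h(x))
= \sigma_B^{b_1(x)+1}(h(x)),
\]
where the first equality is the shifted \eqref{eq:scoe1} and the second uses the rearranged coboundary identity. The left side equals $\varphi_{b_1}(\sigma_A(x))$ by definition and the right side equals $\sigma_B(\varphi_{b_1}(x))$, which is the lemma.

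The main obstacle is ensuring that the exponent $b_1(\sigma_A(x))-k_1(x)$ is genuinely nonnegative, since $\sigma_B$ is only a forward shift on $X_B$. I would handle this by invoking the remark immediately following Definition \ref{defn:storbitmap}: replacing $b_1$ by $b_1+N_1$ for any $N_1\in\Zp$ still satisfies \eqref{eq:b1}. By compactness of $X_A$ and continuity of $k_1$, the constant $K:=\max_{x\in X_A}k_1(x)$ is finite, so choosing $N_1\ge K$ yields a representative with $(b_1+N_1)(\sigma_A(x))\ge K\ge k_1(x)$ uniformly in $x$. This is the representative with which we work throughout, and it is the natural choice for the factor-map construction that follows in the remainder of the section.
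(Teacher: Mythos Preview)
Your argument is correct and is essentially the same computation as the paper's: both amount to using \eqref{eq:scoe3} to rewrite $b_1(\sigma_A(x))$ as $k_1(x) + (1 + b_1(x) - l_1(x))$ and then substituting the orbit relation \eqref{eq:scoe1}. The paper carries this out in one chain of equalities without commenting on the sign of the intermediate exponent $1 + b_1(x) - l_1(x) = b_1(\sigma_A(x)) - k_1(x)$, whereas you explicitly justify its nonnegativity by passing to a shifted representative $b_1 + N_1$ with $N_1 \ge \max k_1$; your version is in that respect more careful, but the underlying idea is identical.
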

\begin{proof}
It follows that 
\begin{align*}
\varphi_{b_1}(\sigma_A(x)) 
& = \sigma_B^{b_1(\sigma_A(x))}(h(\sigma_A(x))) \\
%& = \sigma_B^{1 + b_1(x) -c_1(x)}(h(\sigma_A(x))) \\
& = \sigma_B^{1 + b_1(x) -l_1(x)}\sigma_B^{k_1(x)}(h(\sigma_A(x))) \\
& = \sigma_B(\sigma_B^{b_1(x)}(h(x))) \\
& = \sigma_B(\varphi_{b_1}(x)).
\end{align*}
\end{proof}
For $n \in \Zp$,
put $c_1^n(x) = l_1^n(x) - k_1^n(x), x \in X_A$
and
$c_2^n(y) = l_2^n(y) - k_2^n(y), y \in X_B.$
\begin{lemma} Keep the above notations.
\begin{enumerate}
\renewcommand{\theenumi}{\roman{enumi}}
\renewcommand{\labelenumi}{\textup{(\theenumi)}}
\item
$c_1^n(x )  = n + b_1(x) - b_1(\sigma_A^n(x))$ for $x \in X_A$.
\item
$c_2^n(y )  = n + b_2(y) - b_2(\sigma_B^n(y))$ for $y \in X_B$.
\end{enumerate} 
\end{lemma}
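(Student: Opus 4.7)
The plan is to prove both parts by straightforward induction on $n \in \Zp$, using two ingredients already in hand: the additivity formulas $l_1^{n+m}(x) = l_1^n(x) + l_1^m(\sigma_A^n(x))$ and $k_1^{n+m}(x) = k_1^n(x) + k_1^m(\sigma_A^n(x))$ from the lemma in Section~2, and the defining equation \eqref{eq:scoe3} of the strongly continuous orbit map, namely $c_1(x) = 1 + b_1(x) - b_1(\sigma_A(x))$. Part (ii) will follow by the identical argument applied to $h^{-1}$, with \eqref{eq:scoe4} playing the role of \eqref{eq:scoe3}; so I would only write (i) in detail.

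For the base case $n=0$, both $c_1^0(x) = l_1^0(x) - k_1^0(x)$ and $0 + b_1(x) - b_1(\sigma_A^0(x))$ are zero. For the inductive step, I would specialize the additivity formulas to $m=1$ to obtain the cocycle identity
\begin{equation*}
c_1^{n+1}(x) = c_1^n(x) + c_1(\sigma_A^n(x)).
\end{equation*}
Applying the inductive hypothesis to the first term and \eqref{eq:scoe3} at the point $\sigma_A^n(x)$ to the second term yields
\begin{equation*}
\bigl(n + b_1(x) - b_1(\sigma_A^n(x))\bigr) + \bigl(1 + b_1(\sigma_A^n(x)) - b_1(\sigma_A^{n+1}(x))\bigr),
\end{equation*}
and the middle $b_1(\sigma_A^n(x))$ terms telescope to give $(n+1) + b_1(x) - b_1(\sigma_A^{n+1}(x))$, as required.

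There is no real obstacle here: the content of the lemma is just the elementary observation that if $c_1$ differs from the constant $1$ by a coboundary for $\sigma_A$, then its Birkhoff sum $c_1^n$ differs from $n$ by the corresponding telescoping coboundary for $\sigma_A^n$. The only care to take is to invoke the additivity of $k_1^n, l_1^n$ before combining them into $c_1^n$, since that is precisely what reduces the iterated statement to a one-step application of \eqref{eq:scoe3}.
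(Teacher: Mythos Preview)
Your proof is correct and is essentially the same argument as the paper's: both reduce to applying \eqref{eq:scoe3} at each point $\sigma_A^m(x)$ and telescoping. The paper simply writes $c_1^n(x) = \sum_{m=0}^{n-1}\bigl(l_1(\sigma_A^m(x)) - k_1(\sigma_A^m(x))\bigr)$ directly from the definitions of $l_1^n, k_1^n$ and sums, rather than formalizing it as an induction, but the content is identical.
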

\begin{proof}
(i)
As
$l_1(\sigma_A^m(x)) - k_1(\sigma_A^m(x)) 
= 1 + b_1(\sigma_A^m(x)) - b_1(\sigma_A^{m+1}(x)),
$
we have
\begin{equation*}
c_1^n(x) = 
\sum_{m=0}^{n-1}l_1(\sigma_A^m(x))- 
\sum_{m=0}^{n-1}k_1(\sigma_A^m(x))
= n + b_1(x) - b_1(\sigma_A^n(x)).
\end{equation*}
(ii) is similar to (i).
\end{proof}
Since $(X_A,\sigma_A)$ and $(X_B,\sigma_B)$ are 
continuously orbit equivalent,
the following identities hold.
Its proof is seen in \cite[Lemma 3.3]{MMPre2014}.
\begin{lemma}[{\cite[Lemma 3.3]{MMPre2014}}] \label{lem:klp}
For $x \in X_A$, $y \in X_B$ and $p \in \Zp$, 
we have 
\begin{enumerate}
\renewcommand{\theenumi}{\roman{enumi}}
\renewcommand{\labelenumi}{\textup{(\theenumi)}}
\item
$
k_2^{l_1^p(x)}(h(x)) + l_2^{k_1^p(x)}(h(\sigma_A^p(x))) + p
=
k_2^{k_1^p(x)}(h(\sigma_A^p(x))) + l_2^{l_1^p(x)}(h(x)).
$
\item
$
k_1^{l_2^p(y)}(h^{-1}(y)) + l_1^{k_2^p(y)}(h^{-1}(\sigma_B^p(y))) + p
=
k_1^{k_2^p(y)}(h^{-1}(\sigma_B^p(y))) + l_1^{l_2^p(y)}(h^{-1}(y)).
$
\end{enumerate}
\end{lemma}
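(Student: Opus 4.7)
The plan is to derive both identities from the observation that $h^{-1} \circ h = \id_{X_A}$ (respectively $h \circ h^{-1} = \id_{X_B}$), combined with the general $p$-step composition formula stated immediately after Lemma \ref{lem:gh}. The identity map on $X_A$ is trivially a continuous orbit map, and whenever a composition of two continuous orbit maps happens to be the identity, matching its cocycle data against the composition formula forces a relation between the individual cocycles. This relation is precisely what the lemma records.

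Concretely, for part (i) I would specialize the lemma following Lemma \ref{lem:gh} to $g = h^{-1}$ and $(X_C,\sigma_C) = (X_A,\sigma_A)$, so that $g \circ h = \id_{X_A}$. With $n = l_1^p(x)$ and $m = k_1^p(x)$, that formula reads
\begin{equation*}
\sigma_A^{k_2^n(h(x)) + l_2^m(h(\sigma_A^p(x)))}(\sigma_A^p(x))
= \sigma_A^{k_2^m(h(\sigma_A^p(x))) + l_2^n(h(x))}(x),
\end{equation*}
which I rewrite by absorbing the outer $\sigma_A^p$ as
\begin{equation*}
\sigma_A^{k_2^n(h(x)) + l_2^m(h(\sigma_A^p(x))) + p}(x)
= \sigma_A^{k_2^m(h(\sigma_A^p(x))) + l_2^n(h(x))}(x).
\end{equation*}
For any $x \in X_A$ that is not eventually periodic the iterates $\sigma_A^k(x)$ are pairwise distinct, so the two exponents must agree; substituting back $n = l_1^p(x)$ and $m = k_1^p(x)$ yields identity (i) on this set.

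Since all functions involved are continuous and $\Z$-valued and condition (I) guarantees that the non-eventually periodic points are dense in $X_A$, identity (i) extends by continuity to every $x \in X_A$. Part (ii) follows by the symmetric argument with the roles of $h, h^{-1}$ and of $(X_A,\sigma_A), (X_B,\sigma_B)$ swapped, invoking $h \circ h^{-1} = \id_{X_B}$. The main technical point I expect is the bookkeeping involved in specializing the $p$-step composition formula to $g = h^{-1}$ and verifying that the indices $k_2^n(h(x))$, $l_2^m(h(\sigma_A^p(x)))$ etc. appearing in the statement arise correctly from that substitution; once this indexing is correctly transcribed, the remainder of the argument is essentially a density/continuity check and needs no further ingredient.
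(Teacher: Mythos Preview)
Your argument is correct. The paper does not supply its own proof of this lemma---it simply records that the identities hold because $(X_A,\sigma_A)$ and $(X_B,\sigma_B)$ are continuously orbit equivalent and refers the reader to \cite[Lemma~3.3]{MMPre2014}---so there is nothing in the paper to compare against at the level of proof details.

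That said, your route is exactly the natural one given the machinery the paper has set up: you are specializing the $p$-step composition formula (the unnumbered lemma immediately after Lemma~\ref{lem:gh}) to $g=h^{-1}$, $(X_C,\sigma_C)=(X_A,\sigma_A)$, so that $g\circ h=\id_{X_A}$, and then reading off the exponent identity on the dense set of non-eventually-periodic points. The bookkeeping you describe is right on the nose: with $n=l_1^p(x)$, $m=k_1^p(x)$ the formula gives
\[
\sigma_A^{\,k_2^{n}(h(x))+l_2^{m}(h(\sigma_A^p(x)))+p}(x)
=\sigma_A^{\,k_2^{m}(h(\sigma_A^p(x)))+l_2^{n}(h(x))}(x),
\]
and equality of exponents for non-eventually-periodic $x$ followed by continuity yields (i); (ii) is symmetric. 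Note that the composition lemma is stated for $p\in\N$, while Lemma~\ref{lem:klp} allows $p\in\Zp$; the case $p=0$ is trivial since $k_1^0=l_1^0=0$ and both sides reduce to $k_2^0(h(x))+l_2^0(h(x))=0$. This is almost certainly the argument in \cite{MMPre2014} as well, since the same density-of-aperiodic-points device is used in the paper (e.g.\ in the proof that the cocycle $c_1$ is well defined).
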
 
Keep the situation.
\begin{lemma} \label{lem:constant}
%\begin{enumerate}
%\renewcommand{\theenumi}{\roman{enumi}}
%\renewcommand{\labelenumi}{\textup{(\theenumi)}}
%\item
There exists $N_h \in \N$
such that
$ b_1(x) + b_2(h(x)) =N_h$ for all $x \in X_A$,
%\item
and equivalently 
$ b_2(y) + b_1(h^{-1}(y)) =N_h$ for all $y \in X_B$.
%\end{enumerate} 
\end{lemma}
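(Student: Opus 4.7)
The plan is to show that the integer-valued continuous function
\[
f(x) := b_1(x) + b_2(h(x)), \qquad x \in X_A,
\]
is $\sigma_A$-invariant, and then to invoke topological transitivity of $(X_A,\sigma_A)$ (which follows from irreducibility of $A$ together with condition (I)) to conclude that $f$ is constant. Once $f \equiv N_h$ on $X_A$, the equivalent identity $b_2(y) + b_1(h^{-1}(y)) = N_h$ on $X_B$ follows by the substitution $y = h(x)$.

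First I would compute
$f(\sigma_A(x)) - f(x) = [b_1(\sigma_A(x)) - b_1(x)] + [b_2(h(\sigma_A(x))) - b_2(h(x))].$
The first bracket equals $k_1(x) - l_1(x) + 1$ by \eqref{eq:scoe3}. For the second bracket, the idea is to apply part (ii) of the preceding lemma twice: once with $n = k_1(x)$, $y = h(\sigma_A(x))$, and once with $n = l_1(x)$, $y = h(x)$. Since \eqref{eq:scoe1} gives $\sigma_B^{k_1(x)}(h(\sigma_A(x))) = \sigma_B^{l_1(x)}(h(x))$, the two $b_2(\sigma_B^{\bullet}(\cdot))$ terms match and subtract, yielding
\[
b_2(h(\sigma_A(x))) - b_2(h(x)) = l_1(x) - k_1(x) + c_2^{k_1(x)}(h(\sigma_A(x))) - c_2^{l_1(x)}(h(x)).
\]
Adding the two brackets, the $l_1(x) - k_1(x)$ terms cancel and one is left with
\[
f(\sigma_A(x)) - f(x) = 1 + l_2^{k_1(x)}(h(\sigma_A(x))) - k_2^{k_1(x)}(h(\sigma_A(x))) - l_2^{l_1(x)}(h(x)) + k_2^{l_1(x)}(h(x)),
\]
which is precisely $0$ by Lemma \ref{lem:klp}(i) with $p = 1$.

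With $f$ continuous, $\Z$-valued, and $\sigma_A$-invariant, the final step is to use topological transitivity: choosing a point $x_0 \in X_A$ with dense forward orbit, $f$ is constant on $\{\sigma_A^n(x_0)\}_{n \geq 0}$, and continuity then makes it constant on all of $X_A$. Call this constant $N_h$; since $b_1, b_2$ are $\Zp$-valued, $N_h \in \Zp$, and replacing $b_1$ by $b_1 + 1$ if necessary (which still satisfies \eqref{eq:scoe3}) ensures $N_h \in \N$ as asserted.

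I expect the main obstacle to be the bookkeeping in the invariance computation: the cancellation to $0$ depends on the indices $k_1, l_1, k_2, l_2$ appearing in exactly the symmetric pattern produced by Lemma \ref{lem:klp}(i) at $p = 1$, which itself encodes the compatibility forced by the two-sided orbit equivalence. Everything else—continuity, the reduction from invariance to constancy, and the equivalent formulation on $X_B$—is routine.
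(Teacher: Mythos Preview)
Your argument is correct and is essentially the same as the paper's: both use Lemma~\ref{lem:klp}(i) at $p=1$ together with the expansion $c_2^n(y)=n+b_2(y)-b_2(\sigma_B^n(y))$ and the orbit relation \eqref{eq:scoe1} to show that $b_1(x)+b_2(h(x))$ is $\sigma_A$-invariant, hence constant by irreducibility. The only cosmetic difference is that the paper first derives $c_1(x)=1+b_2(h(\sigma_A(x)))-b_2(h(x))$ and then compares it with \eqref{eq:scoe3}, whereas you compute $f(\sigma_A(x))-f(x)$ directly; the underlying cancellations are identical.
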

\begin{proof}
By Lemma \ref{lem:klp},
we have
\begin{equation*}
k_2^{l_1(x)}(h(x)) + l_2^{k_1(x)}(h(\sigma_A(x))) + 1
=
k_2^{k_1(x)}(h(\sigma_A(x))) + l_2^{l_1(x)}(h(x))
\end{equation*}
so that
\begin{equation*}
c_2^{k_1(x)}(h(\sigma_A(x))) + 1
=
c_2^{l_1(x)}(h(x))
\end{equation*}
and hence
\begin{equation*}
k_1(x) + b_2(h(\sigma_A(x))) - b_2(\sigma_B^{k_1(x)}(h(\sigma_A(x) ))) +1
=
l_1(x) + b_2(h(x)) - b_2( \sigma_B^{l_1(x)}(h(x))).
\end{equation*}
As
$
\sigma_B^{k_1(x)}(h(\sigma_A(x))) = \sigma_B^{l_1(x)}(h(x))$,
we have
\begin{equation*}
k_1(x) + b_2(h(\sigma_A(x) ))  +1
=
l_1(x) + b_2(h(x))
\end{equation*}
so that
\begin{equation*}
c_1(x) =  b_2(h(\sigma_A(x) ))  - b_2(h(x)) + 1.
\end{equation*}
Hence we have
\begin{equation*}
b_1(x) - b_1(\sigma_A(x)) =  b_2(h(\sigma_A(x))) - b_2(h(x)).
\end{equation*}
This implies that 
the function
$x \in X_A \rightarrow b_1(x) + b_2(h(x)) \in \N$ 
is $\sigma_A$-invariant,
so that it is constant.
\end{proof}
\begin{theorem}\label{thm:twoconjugacy}
Suppose that
$(X_A, \sigma_A)\underset{\SCOE}{\sim}(X_B,\sigma_B)$.
Then their two-sided topological Markov shifts
$(\bar{X}_A, \bar{\sigma}_A)$ and 
$(\bar{X}_B, \bar{\sigma}_B)$
are topologically conjugate.
 \end{theorem}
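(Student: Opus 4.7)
The plan is to use the shift-equivariant continuous map $\varphi_{b_1}:X_A\to X_B$ from the preceding lemma together with its symmetric counterpart $\varphi_{b_2}:X_B\to X_A$, $\varphi_{b_2}(y):=\sigma_A^{b_2(y)}(h^{-1}(y))$, obtained by swapping the roles of $A$ and $B$ (so that $\varphi_{b_2}\circ\sigma_B=\sigma_A\circ\varphi_{b_2}$), to construct a topological conjugacy between $(\bar X_A,\bar\sigma_A)$ and $(\bar X_B,\bar\sigma_B)$.

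First I would extend $\varphi_{b_1}$ to the two-sided shifts. For $\bar x=(x_n)_{n\in\Z}\in\bar X_A$ and $k\in\Z$ write $x^{(k)}:=(x_{k+j})_{j\ge 0}\in X_A$. For each $n\in\Z$ define
\[
\bar\varphi(\bar x)_n:=\varphi_{b_1}(x^{(k)})_{n-k}\quad\text{for any }k\le n.
\]
Since $\sigma_A^{k'-k}(x^{(k)})=x^{(k')}$ and $\sigma_B\circ\varphi_{b_1}=\varphi_{b_1}\circ\sigma_A$, the right-hand side does not depend on the choice of $k\le n$, so $\bar\varphi:\bar X_A\to\bar X_B$ is a well-defined continuous map with $\bar\varphi\circ\bar\sigma_A=\bar\sigma_B\circ\bar\varphi$. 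Define $\bar\psi:\bar X_B\to\bar X_A$ analogously from $\varphi_{b_2}$.

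The main computation is the identity $\bar\psi\circ\bar\varphi=\bar\sigma_A^{N_h}$, with $N_h$ the constant of Lemma \ref{lem:constant}. For $x\in X_A$ set $u:=\varphi_{b_1}(x)=\sigma_B^{b_1(x)}(h(x))$ and $M(x):=k_2^{b_1(x)}(h(x))$. Applying the orbit identity \eqref{eq:norbiteqx} for $h^{-1}$ at $y=h(x)$ and $n=b_1(x)$ gives $\sigma_A^{M(x)}(h^{-1}(u))=\sigma_A^{l_2^{b_1(x)}(h(x))}(x)$; composing with $\sigma_A^{b_2(u)}$ yields
\[
\sigma_A^{M(x)}\bigl(\varphi_{b_2}\varphi_{b_1}(x)\bigr)=\sigma_A^{M(x)+D(x)}(x),\qquad D(x):=c_2^{b_1(x)}(h(x))+b_2(u).
\]
The identity $c_2^n(y)=n+b_2(y)-b_2(\sigma_B^n(y))$ from the lemma following the definition of $\varphi_{b_1}$ collapses $D(x)$ to $b_1(x)+b_2(h(x))$, which equals $N_h$ by Lemma \ref{lem:constant}. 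Given $\bar x\in\bar X_A$ and $n\in\Z$, continuity and compactness bound $M$ on $X_A$, so I can choose $k\le n$ with $n-k\ge\sup_{x\in X_A}M(x)$; then coordinate $n-k$ of $\varphi_{b_2}\varphi_{b_1}(x^{(k)})$ coincides with that of $\sigma_A^{N_h}(x^{(k)})=x^{(k+N_h)}$, namely $x_{n+N_h}=\bar\sigma_A^{N_h}(\bar x)_n$. Hence $\bar\psi\circ\bar\varphi=\bar\sigma_A^{N_h}$, and the symmetric argument yields $\bar\varphi\circ\bar\psi=\bar\sigma_B^{N_h}$.

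Since $\bar\sigma_A^{N_h}$ and $\bar\sigma_B^{N_h}$ are homeomorphisms of the compact Hausdorff spaces $\bar X_A,\bar X_B$, the preceding identities force $\bar\varphi$ to be a continuous bijection, hence a homeomorphism, which intertwines $\bar\sigma_A$ and $\bar\sigma_B$, providing the desired topological conjugacy. The principal obstacle is the exponent bookkeeping in the main computation: one must track the sums $k_2^{\cdot}(h(x))$, $l_2^{\cdot}(h(x))$, and $b_2(u)$ and verify that their combination telescopes, via the cocycle identities for $c_1^n,c_2^n$ together with Lemma \ref{lem:constant}, precisely to the constant $N_h$.
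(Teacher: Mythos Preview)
Your proposal is correct and follows essentially the same approach as the paper: extend the shift-commuting maps $\varphi_{b_1}$ and $\varphi_{b_2}$ to sliding block codes $\bar\varphi,\bar\psi$ between the two-sided shifts, and use Lemma~\ref{lem:constant} to show that their compositions are $\bar\sigma_A^{N_h}$ and $\bar\sigma_B^{N_h}$. The one place where you work harder than necessary is the main computation: since $\varphi_{b_2}\circ\sigma_B=\sigma_A\circ\varphi_{b_2}$ and $h^{-1}\circ h=\id$, one has directly
\[
\varphi_{b_2}(\varphi_{b_1}(x))
=\varphi_{b_2}\bigl(\sigma_B^{b_1(x)}(h(x))\bigr)
=\sigma_A^{b_1(x)}\bigl(\varphi_{b_2}(h(x))\bigr)
=\sigma_A^{b_1(x)+b_2(h(x))}(x)
=\sigma_A^{N_h}(x),
\]
so the detour through the orbit identity \eqref{eq:norbiteqx}, the cocycle formula for $c_2^n$, and the uniform bound on $M(x)$ can be omitted.
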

\begin{proof}
By the preceding lemma,
the map $\varphi_{b_1}: X_A \rightarrow X_B$
defined by 
$\varphi_{b_1}(x)= \sigma_B^{b_1(x)}(h(x))$
satisfies
\begin{equation}
\varphi_{b_1}(\sigma_A(x)) = \sigma_B(\varphi_{b_1}(x)), 
\qquad
x \in X_A.
\end{equation}
For $\bar{x} = (x_i)_{i \in \Z} \in \bar{X}_A$
and $j \in \Z$,
put 
$
\bar{x}(j) = x_{[j, \infty)}\in X_A.
$
We set
$\bar{y}{[j]} =\varphi_{b_1}(\bar{x}(j))\in X_B$.
It then follows that
\begin{equation*}
\sigma_B(\bar{y}[j])
 =\varphi_{b_1}(\sigma_A(\bar{x}(j))) =\varphi_{b_1}(\bar{x}(j+1))
 =\bar{y}[j+1]. 
\end{equation*}
Hence we may define an element
$\bar{y} = (y_j)_{j \in \Z} \in \bar{X}_B$
such that 
$\bar{y}_{[j,\infty)} = \bar{y}[j]$.
We set
$\bar{h}(\bar{x}) = \bar{y}$
so that
$\bar{h}: \bar{X}_A \rightarrow \bar{X}_B$
is a continuous map.
Since
$(\bar{\sigma}_A(\bar{x}))(j) 
= x_{[j+1,\infty)} = \sigma_A(\bar{x}(j))$,
we have
\begin{equation*}
[\bar{h}(\bar{\sigma}_A(\bar{x}))]_{[j,\infty)}
=\varphi_{b_1}([\bar{\sigma}_A(\bar{x})](j))
 =\varphi_{b_1}(\sigma_A(\bar{x}(j))) 
 =[\bar{h}(\bar{x})]_{[j+1,\infty)}
 =[\bar{\sigma}_B(\bar{h}(\bar{x}))]_{[j,\infty)} 
\end{equation*}
so that
\begin{equation*}
\bar{h}(\bar{\sigma}_A(\bar{x}))
 =
\bar{\sigma}_B(\bar{h}(\bar{x})), \qquad \bar{x} \in \bar{X}_A. 
\end{equation*}
Hence
$\bar{h}: \bar{X}_A \rightarrow \bar{X}_B$
is a sliding block code (see \cite{LM} for the definition of the sliding block code).
One may similarly construct a sliding block code
$\overline{h^{-1}}: \bar{X}_B \rightarrow \bar{X}_A$
from the inverse
$h^{-1}:X_B\rightarrow X_A$ of $h$.
%and the function $b_2 \in C(X_B,\N)$ satisfying
%$c_2(y) = 1 + b_2(y) - b_2(\sigma_B(y)), y \in X_B$.
We denote by 
$\psi_{b_2}: X_B \rightarrow X_A$
the continuous map defined by
$\psi_{b_2}(y) = \sigma_A^{b_2(y)}(h^{-1}(y))$,
which satisfies
$
\psi_{b_2}(\sigma_B(y)) = \sigma_A(\psi_{b_2}(y)), 
y \in X_B.
$
Then the map
$\overline{h^{-1}}: \bar{X}_B \rightarrow \bar{X}_A$
satisfies the equality
$(\overline{h^{-1}}(\bar{y}))_{[j,\infty)} = \psi_{b_2}(\bar{y}(j))$
for $j \in \Z$.
It then follows that for $j \in \Z$
\begin{align*}
(\overline{h^{-1}}(\bar{h}(\bar{x})))_{[j,\infty)} 
& = \psi_{b_2}(\varphi_{b_1}(\bar{x}(j))) \\
& = \psi_{b_2}(\sigma_B^{b_1(\bar{x}(j))}(h(\bar{x}(j)))) \\
& = \sigma_A^{b_1(\bar{x}(j))}(\psi_{b_2}(h(\bar{x}(j)))) \\
& = \sigma_A^{b_1(\bar{x}(j))}(\sigma_A^{b_2(h(\bar{x}(j)))}
(h^{-1}(h(\bar{x}(j))))) \\
& = \sigma_A^{b_1(\bar{x}(j)) +b_2(h(\bar{x}(j)))}(\bar{x}(j)) \\
& = (\bar{\sigma}_A^{b_1(\bar{x}(j)) +b_2(h(\bar{x}(j)))}(\bar{x}))_{[j,\infty)}. 
\end{align*}
Take a constant number $N_h$ 
in the preceding lemma
so that  we have
\begin{equation*}
(\overline{h^{-1}}(\bar{h}(\bar{x})))_{[j,\infty)} 
= 
(\bar{\sigma}_A^{N_h}(\bar{x}))_{[j,\infty)} 
\qquad
\text{ for all } j \in \Z 
\end{equation*}
and hence 
\begin{equation*}
\overline{h^{-1}}(\bar{h}(\bar{x})) 
 = \bar{\sigma}_A^{N_h}(\bar{x}) \qquad
\text{ for all } \bar{x} \in \bar{X}_A. 
\end{equation*}
We thereby know that 
 $\bar{h}:\bar{X}_A \rightarrow \bar{X}_B$
is injective.
Similarly we see
\begin{equation*}
\bar{h}(\overline{h^{-1}}(\bar{y})) 
 = \bar{\sigma}_B^{N_h}(\bar{y})\qquad
\text{ for all } \bar{y} \in \bar{X}_B 
\end{equation*}
 so that $\bar{h}:\bar{X}_A \rightarrow \bar{X}_B$
is surjective and gives rise to a topological conjugacy
between
$(\bar{X}_A, \bar{\sigma}_A)$ and 
$(\bar{X}_B, \bar{\sigma}_B)$.
\end{proof}

\begin{corollary}
Suppose that
$(X_A, \sigma_A)\underset{\SCOE}{\sim}(X_B,\sigma_B)$.
Then 
each is a finite factor of the other.
In particular,
$(X_A, \sigma_A)$
and
$(X_B,\sigma_B)$ 
are weakly conjugate.
\end{corollary}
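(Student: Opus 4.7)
The plan is to exhibit explicit finite-to-one factor maps in both directions using the shift-commuting maps $\varphi_{b_1}\colon X_A\to X_B$ and $\psi_{b_2}\colon X_B\to X_A$ already constructed inside the proof of Theorem~\ref{thm:twoconjugacy}. Recall that $\varphi_{b_1}(x)=\sigma_B^{b_1(x)}(h(x))$ and $\psi_{b_2}(y)=\sigma_A^{b_2(y)}(h^{-1}(y))$ are continuous and satisfy $\varphi_{b_1}\circ\sigma_A=\sigma_B\circ\varphi_{b_1}$ and $\psi_{b_2}\circ\sigma_B=\sigma_A\circ\psi_{b_2}$, so it suffices to check that each is surjective and has uniformly finite fibres.

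First I would transfer the two-sided composition identity $\overline{h^{-1}}\circ\bar{h}=\bar{\sigma}_A^{N_h}$ established inside Theorem~\ref{thm:twoconjugacy}, with $N_h$ the constant of Lemma~\ref{lem:constant}, to the one-sided setting. Reading off the $[0,\infty)$ tail coordinates and using the tautologies $(\bar{h}(\bar{x}))_{[0,\infty)}=\varphi_{b_1}(\bar{x}(0))$ and $(\overline{h^{-1}}(\bar{y}))_{[0,\infty)}=\psi_{b_2}(\bar{y}(0))$ yields $\psi_{b_2}(\varphi_{b_1}(\bar{x}(0)))=\sigma_A^{N_h}(\bar{x}(0))$ for every $\bar{x}\in\bar{X}_A$. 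Since $A$ is irreducible, every $x\in X_A$ arises as $\bar{x}(0)$ for some two-sided extension $\bar{x}\in\bar{X}_A$, so $\psi_{b_2}\circ\varphi_{b_1}=\sigma_A^{N_h}$ on all of $X_A$; symmetrically $\varphi_{b_1}\circ\psi_{b_2}=\sigma_B^{N_h}$ on $X_B$.

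The finite-fibre statement is then immediate: because $\sigma_A$ is at most $N$-to-one on $X_A$, the composition $\sigma_A^{N_h}=\psi_{b_2}\circ\varphi_{b_1}$ has fibres of cardinality at most $N^{N_h}$, whence so does $\varphi_{b_1}$; the symmetric bound controls $\psi_{b_2}$. For surjectivity of $\varphi_{b_1}$, given $y\in X_B$ I extend $y$ leftward, using irreducibility of $B$, to a two-sided sequence $\bar{y}\in\bar{X}_B$ with $\bar{y}_{[0,\infty)}=y$; then $\bar{x}:=\bar{h}^{-1}(\bar{y})\in\bar{X}_A$ satisfies $\varphi_{b_1}(\bar{x}(0))=(\bar{h}(\bar{x}))_{[0,\infty)}=y$. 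The same argument yields surjectivity of $\psi_{b_2}$.

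Combining these steps, $\varphi_{b_1}$ is a continuous shift-commuting surjection from $X_A$ onto $X_B$ with uniformly finite fibres, exhibiting $(X_B,\sigma_B)$ as a finite factor of $(X_A,\sigma_A)$, and symmetrically for $\psi_{b_2}$. In particular each of $(X_A,\sigma_A)$ and $(X_B,\sigma_B)$ is a factor of the other, which is exactly weak conjugacy. The main obstacle is not really mathematical but bookkeeping: one has to carefully extract the one-sided composition identity $\psi_{b_2}\circ\varphi_{b_1}=\sigma_A^{N_h}$ from the two-sided identity in Theorem~\ref{thm:twoconjugacy}, and invoke irreducibility precisely at the step where a right-infinite admissible sequence must be lifted to a two-sided one.
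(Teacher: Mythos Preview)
Your argument is correct and follows exactly the second of the two routes the paper indicates: it records the identities $\psi_{b_2}\circ\varphi_{b_1}=\sigma_A^{N_h}$ and $\varphi_{b_1}\circ\psi_{b_2}=\sigma_B^{N_h}$ (already computed inside the proof of Theorem~\ref{thm:twoconjugacy}) and reads off finite-to-one surjectivity of both maps. The paper additionally remarks that the result follows from a standard symbolic-dynamics fact (two-sided conjugacy forces each one-sided shift to be a finite factor of the other, cf.\ \cite[Exercise~2]{Kitchens}), but your direct approach is the one it spells out.

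Two minor simplifications you could make. First, the one-sided identity $\psi_{b_2}\circ\varphi_{b_1}=\sigma_A^{N_h}$ does not require passing through two-sided points and then invoking irreducibility to lift: for any $x\in X_A$ one computes directly
\[
\psi_{b_2}(\varphi_{b_1}(x))
=\sigma_A^{b_1(x)}\bigl(\psi_{b_2}(h(x))\bigr)
=\sigma_A^{b_1(x)+b_2(h(x))}(x)
=\sigma_A^{N_h}(x),
\]
using only $\psi_{b_2}\circ\sigma_B=\sigma_A\circ\psi_{b_2}$ and Lemma~\ref{lem:constant}. Second, surjectivity of $\varphi_{b_1}$ follows immediately from $\varphi_{b_1}\circ\psi_{b_2}=\sigma_B^{N_h}$ together with surjectivity of $\sigma_B$, so the detour through $\bar{h}^{-1}$ is unnecessary.
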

\begin{proof}
Since the
two-sided topological Markov shifts
$(\bar{X}_A, \bar{\sigma}_A)$ and 
$(\bar{X}_B, \bar{\sigma}_B)$
are topologically conjugate,
the assertion that 
each of their one-sided topological Markov shifts
$(X_A, \sigma_A)$
and
$(X_B,\sigma_B)$
is a finite factor of the other 
comes from a general theory of symbolic dynamics
(\cite[Exercise 2]{Kitchens}).
One also knows it from the equalities:
$
\psi_{b_2}\circ \varphi_{b_1} = \sigma_A^{N_h}
$
and
$
\varphi_{b_1} \circ \psi_{b_2} = \sigma_B^{N_h}.
$
\end{proof}
Let us denote by $C(\bar{X}_A)$ 
the commutative $C^*$-algebra 
of complex valued continuous functions on $\bar{X}_A$.
The homeomorphism
$\bar{\sigma}_A$ on $\bar{X}_A$ naturally 
induces an automorhism
$\bar{\sigma}_A^*$ on $C(\bar{X}_A)$ 
by
$\bar{\sigma}_A^*(f) = f \circ \bar{\sigma}_A^{-1}$
for $f \in C(\bar{X}_A)$. 
\begin{corollary} \label{cor:crossedXA}
Suppose that
$(X_A, \sigma_A)\underset{\SCOE}{\sim}(X_B,\sigma_B)$.
Then 
their $C^*$-crossed products are isomorphic:
\begin{equation*}
C(\bar{X}_A) \times_{\bar{\sigma}_A^*} \Z 
\cong
C(\bar{X}_B) \times_{\bar{\sigma}_B^*} \Z. 
\end{equation*} 
\end{corollary}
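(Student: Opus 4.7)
The plan is to deduce the corollary as an immediate consequence of Theorem \ref{thm:twoconjugacy} by invoking the standard functoriality of the crossed product construction: a topological conjugacy between two dynamical systems induces a $*$-isomorphism of the associated transformation group $C^*$-algebras. Since Theorem \ref{thm:twoconjugacy} already furnishes the topological conjugacy $\bar{h}:\bar{X}_A\to\bar{X}_B$ intertwining $\bar{\sigma}_A$ and $\bar{\sigma}_B$, essentially no further dynamical content is required.

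Concretely, I would first introduce the pullback $\bar{h}^*:C(\bar{X}_B)\to C(\bar{X}_A)$ by $\bar{h}^*(f) = f\circ\bar{h}$, which is a unital $*$-isomorphism of commutative $C^*$-algebras. I would then verify the equivariance identity $\bar{\sigma}_A^* \circ \bar{h}^* = \bar{h}^* \circ \bar{\sigma}_B^*$: for $f \in C(\bar{X}_B)$,
\[
\bar{\sigma}_A^*(\bar{h}^*(f)) \;=\; f\circ\bar{h}\circ\bar{\sigma}_A^{-1} \;=\; f\circ\bar{\sigma}_B^{-1}\circ\bar{h} \;=\; \bar{h}^*(\bar{\sigma}_B^*(f)),
\]
where the middle equality uses the rewriting $\bar{h}\circ\bar{\sigma}_A^{-1} = \bar{\sigma}_B^{-1}\circ\bar{h}$ of the conjugacy relation $\bar{h}\circ\bar{\sigma}_A = \bar{\sigma}_B\circ\bar{h}$.

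With this equivariant $*$-isomorphism in hand, the universal property of the crossed product extends $\bar{h}^*$ to a $*$-isomorphism
\[
C(\bar{X}_B)\times_{\bar{\sigma}_B^*}\Z \;\xrightarrow{\;\cong\;}\; C(\bar{X}_A)\times_{\bar{\sigma}_A^*}\Z
\]
sending the canonical unitary implementing the $\Z$-action on one side to the corresponding unitary on the other; since $\Z$ is amenable there is no distinction between full and reduced crossed products. There is no real obstacle in this corollary: the entire substance has been absorbed into Theorem \ref{thm:twoconjugacy}, and in particular into the construction of the sliding block code $\bar{h}$ from the cocycle function $b_1$ and the invariant constant $N_h$ supplied by Lemma \ref{lem:constant}.
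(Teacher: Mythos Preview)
Your proposal is correct and matches the paper's approach exactly: the paper states this corollary with no proof, treating it as an immediate consequence of Theorem \ref{thm:twoconjugacy}, and your argument spells out precisely the standard functoriality reasoning that makes this immediate.
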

We note that 
$K_0$-group
$K_0(C(\bar{X}_A) \times_{\bar{\sigma}_A^*} \Z)$
of the $C^*$-algebra
$C(\bar{X}_A) \times_{\bar{\sigma}_A^*} \Z$
is isomorphic to the ordered group
$(\bar{H}^A, \bar{H}^A_+)$ 
(see \cite[Theorem 5.2]{BH}, \cite[Remark 3.10]{Po}).

Let 
$\pi_A:\bar{X}_A \rightarrow X_A$
denote the restriction
defined by
$\pi_A((x_n)_{n \in \Z}) = (x_n)_{n \in \N} \in X_A$  
for $(x_n)_{n \in \Z} \in \bar{X}_A$.
The following proposition is a converse to
Theorem \ref{thm:twoconjugacy}.
\begin{proposition}
Let
$h:X_A \rightarrow X_B$ 
be a homeomorphism
such that there exist 
a topological conjugacy
$\bar{h}: (\bar{X}_A,\bar{\sigma}_A) 
\rightarrow
(\bar{X}_B,\bar{\sigma}_B)
$
as two-sided subshifts
and continuous functions
$f_1:X_A \rightarrow \Zp,$ 
$f_2:X_B \rightarrow \Zp$
such that 
\begin{align*}
\pi_B(\bar{h}(\bar{x})) 
& = \sigma_B^{f_1(x)}(h(x)) \quad 
\text{ for } \quad \bar{x} \in \bar{X}_A, \\
\pi_A(\bar{h}^{-1}(\bar{y})) 
& = \sigma_A^{f_2(y)}(h^{-1}(y)) \quad 
\text{ for } \quad \bar{y} \in \bar{X}_B
\end{align*}
where
$x = \pi_A(\bar{x}), y = \pi_B(\bar{y})$.
 Then 
$h:X_A \rightarrow X_B$ 
gives rise to a strongly continuous orbit homeomorhism.
Hence
we have
$(X_A, \sigma_A)\underset{\SCOE}{\sim}(X_B,\sigma_B)$.
\end{proposition}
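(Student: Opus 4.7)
The plan is to read off, directly from the given data, continuous cocycles $k_1,l_1:X_A\to\Zp$ and $k_2,l_2:X_B\to\Zp$ that make $h$ into a continuous orbit homeomorphism, and then to exhibit $b_1 = f_1$ and $b_2 = f_2$ as the nonnegative coboundary functions required by Definition~\ref{defn:storbitmap}. Since the topological conjugacy $\bar{h}$ is essentially a rewriting of $h$ composed with a shift, the hypothesis already packages the orbit relation and its cocycle together; the task is only to unpack it.

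First, I would apply the compatibility $\pi_B\circ\bar{\sigma}_B = \sigma_B\circ\pi_B$ together with $\bar{h}\circ\bar{\sigma}_A = \bar{\sigma}_B\circ\bar{h}$. Evaluating the hypothesis $\pi_B(\bar{h}(\bar{x})) = \sigma_B^{f_1(x)}(h(x))$ at $\bar{\sigma}_A(\bar{x})$ (whose projection is $\sigma_A(x)$) and comparing with the same formula shifted once on the $B$-side yields
\[
\sigma_B^{f_1(\sigma_A(x))}(h(\sigma_A(x))) = \sigma_B^{1+f_1(x)}(h(x)),
\qquad x\in X_A.
\]
Setting $k_1(x) := f_1(\sigma_A(x))$ and $l_1(x) := 1 + f_1(x)$ produces continuous $\Zp$-valued functions satisfying \eqref{eq:orbiteqx}, so $h$ is a continuous orbit map. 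The completely symmetric argument with $\bar{h}^{-1}$ and $f_2$ gives $k_2(y) := f_2(\sigma_B(y))$, $l_2(y) := 1 + f_2(y)$ realizing $h^{-1}$ as a continuous orbit map, so $h$ is a continuous orbit homeomorphism.

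For the strongly continuous refinement, I would then compute the cocycle function
\[
\Psi_h(1_B)(x) = l_1(x) - k_1(x) = 1 + f_1(x) - f_1(\sigma_A(x)),
\]
so that $b_1 := f_1 \in C(X_A,\Zp)$ verifies \eqref{eq:b1}; hence $h$ is a strongly continuous orbit map. Repeating with $b_2 := f_2$ handles $h^{-1}$, and Definition~\ref{defn:storbiteq} gives $(X_A,\sigma_A)\underset{\SCOE}{\sim}(X_B,\sigma_B)$.

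There is no substantive obstacle: the only points to check are that $k_i, l_i$ are nonnegative (automatic from $f_i\ge 0$) and that $f_1$ genuinely descends from $\bar{X}_A$ to $X_A$ (guaranteed by the hypothesis, since $\sigma_B^{f_1(x)}(h(x))$ depends only on $x=\pi_A(\bar{x})$). The content of the proposition is essentially the observation that the hypothesis rewrites $\bar{h}$ in exactly the form the orbit equation demands, with the shift exponent $f_1$ itself furnishing the required coboundary.
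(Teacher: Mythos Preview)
Your argument is correct and matches the paper's proof essentially line for line: both evaluate the hypothesis at $\bar{\sigma}_A(\bar{x})$, use $\bar{h}\circ\bar{\sigma}_A=\bar{\sigma}_B\circ\bar{h}$ and $\pi_B\circ\bar{\sigma}_B=\sigma_B\circ\pi_B$ to obtain $\sigma_B^{f_1(\sigma_A(x))}(h(\sigma_A(x)))=\sigma_B^{f_1(x)+1}(h(x))$, and then read off $k_1,l_1,b_1$ (and symmetrically $k_2,l_2,b_2$). If anything, your write-up is slightly more explicit than the paper's in naming these functions.
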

\begin{proof}
We note that 
$\bar{h} \circ \bar{\sigma}_A = \bar{\sigma}_B \circ \bar{h}$
and
$\pi_B \circ \bar{\sigma}_B = \sigma_B \circ \pi_B$.
For $\bar{x} \in \bar{X}_A$, we have 
\begin{equation*}
\pi_B(\bar{h}(\bar{\sigma}_A(\bar{x})))
=\sigma_B (\pi_B(\bar{h}(\bar{x})))
=\sigma_B^{f_1(x)+1}(h(x)).
\end{equation*}
As
$\pi_A(\bar{\sigma}_A(\bar{x})) = \sigma_A(x)$,
we also have
\begin{equation*}
\pi_B(\bar{h}(\bar{\sigma}_A(\bar{x})))
=\sigma_B^{f_1(\pi_A(\bar{\sigma}_A(\bar{x})))}
(h(\pi_A(\bar{\sigma}_A(\bar{x}))))
=\sigma_B^{f_1(\sigma_A(x))}(h(\sigma_A(x)))
\end{equation*}
so that
\begin{equation*}
 \sigma_B^{f_1(x)+1}(h(x))
=\sigma_B^{f_1(\sigma_A(x))}(h(\sigma_A(x))), \qquad x \in X_A.
\end{equation*}
This implies that 
$h:X_A \rightarrow X_B$ 
and smilarly
$h^{-1}:X_B \rightarrow X_A$ 
give rise to  strongly continuous orbit maps,
which are the inverses to each other.
\end{proof}

%%%%%%%%%%%%%%%%%%%%%%%%%%%%%%%%%%%%%%%%%%%%%%
%%%%%%%%%%%%%%%%%%%%%%%%%%%%%%%%%%%%%%%%%%%%%%
\section{Cocycle conjugacy}
%%%%%%%%%%%%%%%%%%%%%%%%%%%%%%
Let us denote by
$S_1, \dots, S_N$ the generating partial isometries
of the Cuntz-Krieger algebra $\OA$ satisfying 
\begin{equation} 
\sum_{j=1}^N S_j S_j^* = 1, \qquad
S_i^* S_i = \sum_{j=1}^N A(i,j) S_jS_j^*, \quad i=1,\dots,N. \label{eq:CK}
\end{equation} 
For $t \in {\mathbb{R}}/\Z = {\mathbb{T}}$,
the correspondence
$S_i \rightarrow e^{2 \pi\sqrt{-1}t}S_i$
gives rise to an automorphism
of $\OA$ which we denote by
$\rho^A_t \in \Aut(\OA)$.
The automorphisms
yield an action of ${\mathbb{T}}$
to $\Aut(\OA)$ 
which we call the gauge action.
Let us denote by $\DA$ the $C^*$-subalgebra of $\OA$
generated by the projections of the form:
$S_{i_1}\cdots S_{i_n}S_{i_n}^* \cdots S_{i_1}^*,
$
which is canonically isomorphic to the commutative 
$C^*$-algebra $C(X_A)$
by identifying the projection
$S_{i_1}\cdots S_{i_n}S_{i_n}^* \cdots S_{i_1}^*
$
with the characteristic function
$\chi_{U_{i_1\cdots i_n}} \in C(X_A)$
of the cylinder set
$U_{i_1\cdots i_n}$
for the word
${i_1\cdots i_n}$.

Throughout the section,
we assume that $(X_A, \sigma_A)\underset{\SCOE}{\sim}(X_B,\sigma_B)$
and
fix a strongly continuous orbit homeomorphism
$h:(X_A, \sigma_A)\rightarrow (X_B,\sigma_B)$
and continuous functions 
$k_1,l_1, b_1: X_A \rightarrow \Zp$
and
$k_2,l_2, b_2: X_B \rightarrow \Zp$
satisfying
\eqref{eq:scoe1}, \eqref{eq:scoe3} and \eqref{eq:scoe2}, \eqref{eq:scoe4},
respectively.
By Lemma \ref{lem:constant}, we have the following lemma.
\begin{lemma} \label{lem:b1b2}
\hspace{6cm}
\begin{enumerate}
\renewcommand{\theenumi}{\roman{enumi}}
\renewcommand{\labelenumi}{\textup{(\theenumi)}}
\item
$
 b_1(x) - b_1(\sigma_A(x)) 
 =
-b_2(h(x)) + b_2(h(\sigma_A(x)))
$
for 
$x \in X_A.$
\item
$ 
b_2(y) - b_2(\sigma_B(y)) 
 =
-b_1(h^{-1}(y)) + b_1(h^{-1}(\sigma_B(y)))
$
for
$
 y \in X_B.
$
\end{enumerate}
\end{lemma}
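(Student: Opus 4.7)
The plan is to observe that Lemma \ref{lem:b1b2} is essentially a direct corollary of Lemma \ref{lem:constant}. The content of Lemma \ref{lem:constant} is that the function $x \mapsto b_1(x) + b_2(h(x))$ is constant on $X_A$ (equal to $N_h$), and equivalently $y \mapsto b_2(y) + b_1(h^{-1}(y))$ is constant on $X_B$. Both identities in Lemma \ref{lem:b1b2} are simply restatements of this constancy.

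For (i), I would apply Lemma \ref{lem:constant} at the two points $x$ and $\sigma_A(x)$ to get
\begin{equation*}
b_1(x) + b_2(h(x)) = N_h = b_1(\sigma_A(x)) + b_2(h(\sigma_A(x))),
\end{equation*}
and then rearrange to obtain
\begin{equation*}
b_1(x) - b_1(\sigma_A(x)) = -b_2(h(x)) + b_2(h(\sigma_A(x))).
\end{equation*}
For (ii), I would apply the equivalent form of Lemma \ref{lem:constant} at the two points $y$ and $\sigma_B(y)$:
\begin{equation*}
b_2(y) + b_1(h^{-1}(y)) = N_h = b_2(\sigma_B(y)) + b_1(h^{-1}(\sigma_B(y))),
\end{equation*}
and rearrange analogously.

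There is no real obstacle here; the work is already done inside the proof of Lemma \ref{lem:constant}, where precisely the relation $b_1(x) - b_1(\sigma_A(x)) = b_2(h(\sigma_A(x))) - b_2(h(x))$ was derived en route to showing $\sigma_A$-invariance of $b_1 + b_2 \circ h$. So (i) is really just a restatement of an intermediate step, and (ii) is its mirror image under the symmetry $A \leftrightarrow B$, $h \leftrightarrow h^{-1}$.
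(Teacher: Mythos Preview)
Your proposal is correct and matches the paper's approach exactly: the paper simply states ``By Lemma \ref{lem:constant}, we have the following lemma,'' and your argument spells out precisely why---evaluate the constant $b_1 + b_2\circ h = N_h$ at $x$ and $\sigma_A(x)$ (respectively at $y$ and $\sigma_B(y)$) and subtract. As you note, identity (i) is literally an intermediate line in the proof of Lemma \ref{lem:constant}.
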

For $i \in \{1,\dots,N\}$ and 
$x = (x_n)_{n \in \N} \in X_A$,
we write
$ix = (i,x_1,x_2, \dots)$.
\begin{lemma}\label{lem:zb1b2}
For $i \in \{1,2,\dots,N\}$ and $y \in X_B$
satisfying $ih^{-1}(y) \in X_A$,
put $ z = ih^{-1}(y)$. 
Then we have
\begin{equation}
b_1(z) - b_1(\sigma_A(z)) =
 b_2(y) - b_2(h(z)).
\end{equation}
\end{lemma}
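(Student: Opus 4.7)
The plan is to invoke Lemma \ref{lem:b1b2}(i) directly, after unwinding what $\sigma_A(z)$ and $h(\sigma_A(z))$ are in terms of $y$. First I would observe that by the very definition of the concatenation $ih^{-1}(y) = (i, (h^{-1}(y))_1, (h^{-1}(y))_2, \dots)$, applying the shift map gives
\begin{equation*}
\sigma_A(z) = \sigma_A(i h^{-1}(y)) = h^{-1}(y),
\end{equation*}
and hence $h(\sigma_A(z)) = y$.

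Next I would apply Lemma \ref{lem:b1b2}(i) to the point $x = z \in X_A$, yielding
\begin{equation*}
b_1(z) - b_1(\sigma_A(z)) = -b_2(h(z)) + b_2(h(\sigma_A(z))).
\end{equation*}
Substituting $h(\sigma_A(z)) = y$ into the right-hand side produces $b_2(y) - b_2(h(z))$, which is exactly the desired identity.

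There is essentially no obstacle here: the only small point to be careful about is that the hypothesis $ih^{-1}(y) \in X_A$ (i.e., that the concatenation is admissible, so $A(i, (h^{-1}(y))_1) = 1$) guarantees $z$ is a bona fide point of $X_A$ on which $b_1$ and $\sigma_A$ are defined, and so Lemma \ref{lem:b1b2}(i) may be legitimately applied at $x = z$.
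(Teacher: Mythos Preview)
Your proof is correct and follows exactly the same approach as the paper: observe that $\sigma_A(z) = h^{-1}(y)$ so that $h(\sigma_A(z)) = y$, and then apply Lemma~\ref{lem:b1b2}(i) at $x = z$. The paper's proof is just a one-line version of what you wrote.
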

\begin{proof}
Since 
$h(\sigma_A(z)) = y$, 
the desired equality follows from Lemma \ref{lem:b1b2} (i).
\end{proof}
Recall that 
$\rho^B$ stands for the gauge action on $\OB$.
Denote by
$U(\OB)$ and 
$U(\DB)$ 
the group of unitaries of $\OB$
and that of  $\DB$
respectively.
A continuous map 
$t \in {\mathbb{R}}/{\mathbb{Z}} = {\mathbb{T}}
\rightarrow 
v_t \in U(\OB)
$
is called a one-cocycle for $\rho^B$
if
it satisfies
$v_{t+s} = v_t \rho^B_t(v_s), t,s \in {\mathbb{T}}$.
Then 
the map 
$t\in \T \rightarrow   \Ad(v_t) \circ \rho^B_t \in \Aut(\OB)$
yields an action called 
a perturbed action of $\rho^B$ by $v$.
Since the function $b_2$ is regarded as a positive element of $\DB$, 
one may define  unitaries
$
u_t^{b_2} = \exp({2\pi \sqrt{-1} t b_2}) \in U(\DB),
t \in \T.
$
As
$\rho_s^B(u_t^{b_2} ) = u_t^{b_2} $ 
for all $s,t \in \T$,
the family
$
\{
u_t^{b_2}
\}_{t \in \T}
$
is a one-cocycle for $\rho^B$.

The following proposition is a generalization of 
\cite[2.17 Proposition]{CK}.
\begin{proposition}\label{prop:cocycleconjugacy}
Suppose  that $(X_A, \sigma_A)\underset{\SCOE}{\sim}(X_B,\sigma_B)$.
Then there exists an isomorphism
$\Phi:\OA \rightarrow \OB$ such that 
\begin{equation*}
\Phi(\DA) = \DB
\quad 
\text{  and }
\quad
\Phi \circ \rho^A_t = \Ad(u_t^{b_2})\circ \rho^B_t \circ \Phi, 
\qquad t \in {\mathbb{T}}.
\end{equation*}
\end{proposition}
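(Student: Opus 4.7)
The plan is to produce $\Phi$ via the standard recipe from the continuous orbit equivalence literature and to extract the cocycle conjugacy by a direct calculation on the generators $S_i^A$ of $\OA$. The key point is that the SCOE condition supplies the explicit coboundary $c_1 = 1 + b_1 - b_1\circ\sigma_A$, and Lemma \ref{lem:b1b2}(i) transports this coboundary from $X_A$ to $X_B$ via $b_2\circ h$, producing exactly the unitary one-cocycle $u_t^{b_2}\in\DB$.

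For the construction I would refine the first-symbol partition of $X_A$ by branches of the local homeomorphism $h$ and by the (locally constant) level sets of $k_1, l_1$, obtaining a finite clopen partition $X_A = \bigsqcup_\alpha E_\alpha$ on which $k_1\equiv k_\alpha$, $l_1\equiv l_\alpha$, and $h|_{E_\alpha}$ is a homeomorphism; there are admissible words $\mu_\alpha$ of length $l_\alpha$ and $\widetilde{\mu}_\alpha$ of length $k_\alpha$ with $h(E_\alpha)\subseteq U_{\mu_\alpha}$ and $h(\sigma_A(E_\alpha))\subseteq U_{\widetilde{\mu}_\alpha}$. The identity $\sigma_B^{l_\alpha}(h(x))=\sigma_B^{k_\alpha}(h(\sigma_A(x)))$ on $E_\alpha$ makes
\begin{equation*}
\Phi(S_{i_\alpha}^A\,\chi_{E_\alpha}) \,=\, S_{\mu_\alpha}^B (S_{\widetilde{\mu}_\alpha}^B)^*\,\Phi(\chi_{E_\alpha})
\end{equation*}
well-defined; summing over $\alpha$ with first symbol $i_\alpha=i$ gives $\Phi(S_i^A)$, and the COE construction of \cite{MaPacific, MMKyoto} promotes this to a $*$-isomorphism $\Phi:\OA\to\OB$ carrying $\DA$ onto $\DB$, with the diagonal identification $\Phi(f\circ h)=f$ for $f\in C(X_B)$ (equivalently $\Phi(g)=g\circ h^{-1}$ for $g\in C(X_A)$).

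The heart of the proof is then the gauge computation. Each summand $S_{\mu_\alpha}^B(S_{\widetilde{\mu}_\alpha}^B)^*$ has gauge weight $l_\alpha-k_\alpha=c_1|_{E_\alpha}$ and $\Phi(\chi_{E_\alpha})\in\DB$ is $\rho^B$-fixed, so $\rho^B_t(\Phi(S_i^A))=\Phi(e^{2\pi\sqrt{-1}\,t\,c_1}\,S_i^A)$. Substituting $c_1=1+b_1-b_1\circ\sigma_A$ and using Lemma \ref{lem:b1b2}(i) to rewrite the coboundary as $-b_2\circ h+b_2\circ h\circ\sigma_A$, together with the Cuntz--Krieger covariance $(b_2\circ h\circ\sigma_A)\cdot S_i^A=S_i^A\cdot (b_2\circ h)$ (which follows from $(b_2\circ h\circ\sigma_A)(iy)=b_2(h(y))$), I would obtain
\begin{equation*}
\rho^B_t(\Phi(S_i^A)) \,=\, e^{2\pi\sqrt{-1}\,t}\,\Phi(e^{-2\pi\sqrt{-1}\,t\,b_2\circ h})\,\Phi(S_i^A)\,\Phi(e^{2\pi\sqrt{-1}\,t\,b_2\circ h}).
\end{equation*}
Since $\Phi(b_2\circ h)=b_2$, the two outer factors are $(u_t^{b_2})^*$ and $u_t^{b_2}$, so the identity rearranges to $\Phi(\rho^A_t(S_i^A))=\Ad(u_t^{b_2})(\rho^B_t(\Phi(S_i^A)))$ on generators, hence on all of $\OA$. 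The main technical obstacle is this commutation step, namely verifying carefully that under the $\DA$--$S_i^A$ covariance the function $b_2\circ h\circ\sigma_A$ does indeed move past $S_i^A$ to yield $b_2\circ h$ on the right, so that the exponentials split symmetrically around $\Phi(S_i^A)$; the one-cocycle property $u_{t+s}^{b_2}=u_t^{b_2}\rho^B_t(u_s^{b_2})$ is then automatic because $u_t^{b_2}\in\DB$ is $\rho^B$-fixed.
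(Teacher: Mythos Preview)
Your argument is correct and is essentially the paper's own proof in abstract $C^*$-algebraic dress: the paper constructs $\Phi=\Ad(U_h)$ concretely on $\ell^2(X_A)\to\ell^2(X_B)$ and verifies the cocycle identity by evaluating both sides on basis vectors $e_y^B$, invoking Lemma~\ref{lem:zb1b2}, whereas you cite the COE construction and carry out the same gauge--weight bookkeeping $c_1=l_1-k_1=1+b_1-b_1\circ\sigma_A=1-b_2\circ h+b_2\circ h\circ\sigma_A$ (this is exactly Lemma~\ref{lem:b1b2}(i), of which Lemma~\ref{lem:zb1b2} is the specialization to $z=ih^{-1}(y)$) and then push the coboundary through $S_i^A$ via the covariance $(f\circ\sigma_A)S_i^A=S_i^A f$ instead of computing on vectors. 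One small slip to fix: since $E_\alpha\subseteq U_{i_\alpha}$ lies in the \emph{range} of $S_{i_\alpha}^A$, the decomposition should read $\chi_{E_\alpha}S_{i_\alpha}^A$ rather than $S_{i_\alpha}^A\chi_{E_\alpha}$, and the diagonal projection on the right must cut down the \emph{source}, i.e.\ be $\chi_{h(\sigma_A(E_\alpha))}=\Phi(\chi_{\sigma_A(E_\alpha)})$ (compare the paper's $Q_\nu^{(i)}$, which is indexed by $y=h(\sigma_A(z))$); this placement issue is cosmetic and does not affect your gauge computation, since each summand still has $\rho^B$-weight $l_\alpha-k_\alpha$.
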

\begin{proof}
The proof below follows essentially the proof of 
\cite[Proposition 5.6.]{MaPacific}.
For the sake of completeness, 
we will give the proof in the following way.
Let us denote by 
$\HA$ (resp. $\HB$) the Hilbert space
with its complete orthonormal system
$\{ e_x^A \mid x \in X_A\}$ 
(resp. $\{ e_y^B \mid y \in X_B\}$).
Consider the partial isometries 
$S_i^A, i=1,\dots,N$ on $\HA$
defined by 
\begin{equation}
S_i^A e_x^A =
\begin{cases}
e_{ix}^A & \text{ if } ix \in X_A,\\
0 & \text{ otherwise.}
\end{cases}
\label{eq:siAex}
\end{equation} 
Then the operators
$S_i^A, i=1,\dots,N$ are 
partial isometries 
satisfying the relations
\eqref{eq:CK}. 
For the $M \times M$ matrix $B =[B(i,j)]_{i,j=1}^M$,
we similarly define the 
partial isometries  $S_i^B, i=1,\dots,M$ 
on $\HB$
satisfying the relations
\eqref{eq:CK} for $B$.
Hence one may identify the Cuntz--Krieger algebra 
$\OA$ (resp.  $\OB$) 
with
the $C^*$-algebra 
$C^*(S_1^A,\dots, S_N^A)$
(resp. $C^*(S_1^B,\dots, S_M^B)$)
generated by the 
partial isometries $S_1^A,\dots, S_N^A$
(resp. $S_1^B,\dots, S_M^B$).
For the continuous function 
$k_1:X_A \rightarrow \Zp,
$
let
$K_1 = \Max\{k_1(x) \mid x \in X_A\}$.
By adding 
$K_1 - k_1(x)$ to $k_1(x)$ and $l_1(x)$,
one may assume that
$k_1(x) = K_1$ for all $x \in X_A$.
%%%%%%%%%
%Let $h : X_A \rightarrow X_B$ be a homeomorphism giving rise to 
%a strongly continuous orbit equivalence between $(X_A, \sigma_A)$ 
%and $(X_B, \sigma_B)$. Take continuous functions
%$k_1, l_1: X_A \rightarrow \Zp$ and $k_2, l_2: X_B \rightarrow \Zp$
%satisfying \eqref{eq:orbiteqx} and \eqref{eq:orbiteqy} and
%\begin{align*}
%c_1(x) 
%& = l_1(x) - k_1(x) = 1 + b_1(x) - b_1(\sigma_A(x)),\qquad x \in X_A, \\
%c_2(y) 
%& = l_2(y) - k_2(y) = 1 + b_2(y) - b_2(\sigma_B(y)),\qquad y \in X_B.
%\end{align*}
%%%%%%%%%%%%%
Define the unitary
$U_h :\HA \rightarrow \HB$
by 
$
U_h e_x^A = e_{h(x)}^B
$
for
$
x \in X_A.
$
We will see that
$\Phi = \Ad (U_h)$
satisfies the desired properties.
We fix $i \in \{1,\dots,N\}$
and
set
$X_B^{(i)} = \{ y \in X_B \mid ih^{-1}(y) \in X_A \}$.
For $ y \in X_B$, one has
\begin{equation*}
U_h S_i^A U_h^* e_y^B = 
\begin{cases}
e_{h(ih^{-1}(y))}^B & \text{ if }   y \in X_B^{(i)},\\
0 & \text{ otherwise. }
\end{cases}
\end{equation*} 
For $y \in X_B^{(i)}$,
put
$z = ih^{-1}(y) \in X_A$.
By the equality $h(\sigma_A(z)) =y$
with \eqref{eq:orbiteqx}, 
one has
$h(z) 
\in \sigma_B^{-l_1(z)}(\sigma_B^{k_1(z)}(y))
=   \sigma_B^{-l_1(z)}(\sigma_B^{K_1}(y))$
and 
\begin{equation}
h(z) = (\mu_1(z),\dots,\mu_{l_1(z)}(z), y_{K_1+1}, y_{K_1+2}, \dots ) 
\label{eq:hz}
\end{equation}
for some
$(\mu_1(z),\dots,\mu_{l_1(z)}(z)) \in B_{l_1(z)}(X_B)$.
Put
$
L_1= \Max\{l_1(z) \mid z = i h^{-1}(y), \, y \in X_B^{(i)} \}.
$
The set 
$$
W^{(i)} =\{(\mu_1(z),\dots,\mu_{l_1(z)}(z)) \in B_{l_1(z)}(X_B) 
\mid z = i h^{-1}(y), \, y \in X_B^{(i)} \}
$$
of words 
is a finite subset of
$W_{L_1}(X_B) = \cup_{j=0}^{L_1}B_j(X_B)$.
%%%%%%%%%
%The map $y \in X_B^{(i)} \rightarrow 
%(\mu_1(z),\dots,\mu_{l_1(z)}(z)) \in W_{\tilde{l}_1}(X_B)$
%is continuous, where 
%$W_{\tilde{l}_1}(X_B)$ is endowed with discrete topology.
%%%%%%%%%%%%%
For a word $\nu=(\nu_1,\dots, \nu_j) \in W^{(i)}$,
put a clopen set in $X_B^{(i)}$
$$
E_\nu^{(i)} =\{ y \in X_B^{(i)} 
\mid \mu_1(z) = \nu_1,\dots,\mu_{l_1(z)}(z) =\nu_j, \, z = i h^{-1}(y)  \}
$$
and the projections
$$
Q_\nu^{(i)}  = \chi_{E_\nu^{(i)}}
\quad \text{ and }\quad
P^{(i)} = \chi_{X_B^{(i)}}
$$
in $\DB$,
where
$\chi_{E_\nu^{(i)}}
$ 
and 
$
\chi_{X_B^{(i)}}
$
denote the characteristic functions on $X_B$
for the clopen sets 
$
E_\nu^{(i)}
$ 
and 
$
X_B^{(i)}
$
respectively.
Since 
$
X_B^{(i)}
$
is a disjoint union
$
X_B^{(i)} 
= \cup_{\nu \in W^{(i)} }E_\nu^{(i)},
$
we have
$$
P^{(i)} =\sum_{\nu \in W^{(i)}}Q_\nu^{(i)}.
$$
For $y \in X_B^{(i)}$ and
$ \nu \in W^{(i)}$,  
we have
$ y \in E_\nu^{(i)}$ 
if and only if
$Q_\nu^{(i)}e_y^B = e_y^B$.
By \eqref{eq:hz},
we have
\begin{equation}
e_{h(ih^{-1}(y))}^B 
= \sum_{\nu \in W^{(i)}}
 S_{\nu}^B \sum_{\xi\in B_{K_1}(X_B)}{S_\xi^B}^* Q_\nu^{(i)} e_y^B
 \quad
 \text{ for } y \in X_B^{(i)}.
\end{equation}
Hence 
\begin{equation*}
U_h S_i^A U_h^* e_y^B  
= \sum_{\nu \in W^{(i)}}\sum_{\xi\in B_{K_1}(X_B)}
 S_{\nu}^B {S_\xi^B}^* Q_\nu^{(i)} e_y^B
\qquad \text{ for } y \in X_B^{(i)}
\end{equation*}
so that 
\begin{equation*}
U_h S_i^A U_h^*  
= \sum_{\nu \in W^{(i)}} \sum_{\xi\in B_{K_1}(X_B)}
 S_{\nu}^B {S_\xi^B}^* Q_\nu^{(i)}.
\end{equation*}
As $ Q_\nu^{(i)}\in \DB$,
we have
$\Ad(U_h) (S_i^A) \in \OB$  
so that 
$\Ad(U_h) (\OA) \subset \OB.$  
Since $U_h^* = U_{h^{-1}}$, 
we symmetrically have
$\Ad(U_h^*) (\OB) \subset \OA$
so that 
$\Ad(U_h) (\OA) = \OB.$
It is direct to see that 
$\Ad(U_h) (f) = f \circ h^{-1}$ for $ f \in \DA$
from the definition $U_h e_x^A = e_{h(x)}^B, x \in X_A$
so that we have
$\Ad(U_h) (\DA) = \DB.$

We will next show that 
$
\Ad(U_h) \circ \rho^A_t 
= 
\Ad(u_t^{b_2})\circ \rho^B_t \circ \Ad(U_h)
$
for
$
t \in \T.
$
It follows that 
\begin{align*}
(\Ad(u_t^{b_2})\circ \rho^B_t \circ \Ad(U_h)) (S_i^A) e_y^B
& =  u_t^{b_2} \rho_t^B(U_h S_i U_h^*) (u_t^{b_2})^* e_y^B \\
& =  \sum_{\nu \in W^{(i)}} \sum_{\xi\in B_{K_1}(X_B)}
    u_t^{b_2} \rho_t^B(S_{\nu}^B {S_\xi^B}^* Q_\nu^{(i)}) u_{-t}^{b_2} e_y^B. 
\end{align*}
Since
$Q_\nu^{(i)}e_y^B \ne 0$ if and only if
$Q_\nu^{(i)}e_y^B  = e_y^B$ and $\nu_1 = \mu_1(z),\dots, \nu_j = \mu_{l_1(z)}(z)$.
For $y \in E_\nu^{(i)} \cap U_\xi$, we have 
$S_{\nu}^B {S_\xi^B}^* Q_\nu^{(i)}e_y^B = e_{h(z)}^B = e_{h(ih^{-1}(y))}^B$
so that  
\begin{align*}
    & u_t^{b_2} \rho_t^B(S_{\nu}^B {S_\xi^B}^* Q_\nu^{(i)} ) u_{-t}^{b_2} e_y^B \\
=  & \exp({2 \pi \sqrt{-1}(|\nu|-|\xi|)t})
     u_t^{b_2} S_{\nu}^B {S_\xi^B}^* Q_\nu^{(i)} u_{-t}^{b_2} e_y^B \\
=  & \exp({2 \pi \sqrt{-1}(|\nu|-|\xi| - b_2(y))t})
     u_t^{b_2} S_{\nu}^B {S_\xi^B}^* Q_\nu^{(i)}  e_y^B \\
=  &  \exp({2 \pi \sqrt{-1}(l_1(z) - k_1(z) - b_2(y))t})
     u_t^{b_2}   e_{h(ih^{-1}(y))}^B \\
= & \exp({2 \pi \sqrt{-1}(l_1(z) - k_1(z) - b_2(y) + b_2(h(ih^{-1}(y))))t})
     e_{h(ih^{-1}(y))}^B. 
\end{align*}
Lemma \ref{lem:zb1b2} ensures us the equality
$
 b_2(y) - b_2(h(ih^{-1}(y)))=
 b_1(z) - b_1(\sigma_A(z))
$
so that 
\begin{equation*}     
 l_1(z) - k_1(z) - b_2(y) + b_2(h(ih^{-1}(y)))
= c_1(z) -(b_1(z) - b_1(\sigma_A(z))) =1.
\end{equation*}
As $e_{h(ih^{-1}(y))}^B =U_h S_i^A U_h^*e_y^B$,
we have
\begin{equation*}
 u_t^{b_2} \rho_t^B(S_{\nu}^B {S_\xi^B}^* Q_\nu^{(i)} ) u_{-t}^{b_2} e_y^B
=  \exp({2 \pi \sqrt{-1}t})  U_h S_i^A U_h^*e_y^B 
=  \Ad(U_h)( \rho_t^A(S_i^A )) e_y^B.
\end{equation*}
Hence we have
\begin{align*}
(\Ad(u_t^{b_2})\circ \rho^B_t \circ \Ad(U_h) )(S_i^A) e_y^B
& =  \sum_{\nu \in W^{(i)}} \sum_{\xi\in B_{K_1}(X_B)}
 u_t^{b_2} \rho_t^B(S_{\nu}^B {S_\xi^B}^* Q_\nu^{(i)} )u_{-t}^{b_2} e_y^B \\
& =\Ad(U_h)( \rho_t^A(S_i^A )) e_y^B.
\end{align*}
We thus have
\begin{equation*}
(\Ad(u_t^{b_2})\circ \rho^B_t \circ \Ad(U_h)) (S_i^A)  
=\Ad(U_h)( \rho_t^A(S_i^A ))
\end{equation*}
and hence
\begin{equation*}
\Ad(u_t^{b_2})\circ \rho^B_t \circ \Ad(U_h)  =\Ad(U_h) \circ  \rho_t^A
\quad \text{ for } t \in {\mathbb{T}}.
\end{equation*}
By setting 
$\Phi = \Ad(U_h): \OA\rightarrow \OB$,
we have a desired cocycle conjugacy.
\end{proof}

%%%%%%%%%%%%%%%%%%%%
To prove the converse of the above proposition, 
we provide the following lemma.
\begin{lemma}
For a unitary representation $u$ of $\T$ into $\DB$,
there exists 
a continuous function
$f_0 \in C(X_B, \Z)$ such that 
$
u_t= \exp({2\pi \sqrt{-1} t f_0}) 
$
for $t \in \T.$
\end{lemma}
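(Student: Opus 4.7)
The plan is to exploit the fact that any norm-continuous one-parameter group of unitaries in a unital $C^*$-algebra admits a bounded generator, and then to combine this with the commutative structure of $\DB$. First I would identify $\DB$ with the commutative $C^*$-algebra $C(X_B)$ via the canonical isomorphism recalled in the introduction. The given unitary representation $t \mapsto u_t$ of $\T$ into $\DB$ is, in particular, a norm-continuous one-parameter group in the unital abelian $C^*$-algebra $\DB$, so a standard argument---for instance, observing that $\int_0^{\varepsilon} u_s \, ds$ is a norm-small perturbation of $\varepsilon \cdot 1$ and hence invertible for small $\varepsilon > 0$---produces an element $a \in \DB$, which is anti-self-adjoint since each $u_t$ is unitary, such that $u_t = \exp(ta)$ for all $t \in {\mathbb{R}}$.

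Writing $a = 2\pi\sqrt{-1}\, f_0$ with $f_0 = a/(2\pi\sqrt{-1}) \in \DB$ self-adjoint---equivalently, a real-valued continuous function on $X_B$---one obtains
\begin{equation*}
u_t = \exp(2\pi\sqrt{-1}\, t\, f_0), \qquad t \in {\mathbb{R}}.
\end{equation*}
The periodicity condition $u_1 = u_0 = 1$, built into the assumption that $u$ is a representation of $\T = {\mathbb{R}}/\Z$, then forces $\exp(2\pi\sqrt{-1}\, f_0) = 1$ in $\DB$; that is, $e^{2\pi\sqrt{-1}\, f_0(y)} = 1$ for every $y \in X_B$, so $f_0(y) \in \Z$. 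Since $f_0$ is continuous and $\Z$ is discrete, $f_0$ is locally constant, and hence $f_0 \in C(X_B, \Z)$, as required.

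I do not anticipate any serious obstacle: the argument is a routine application of the bounded-generator fact, combined with the spectral observation that $\exp(2\pi\sqrt{-1}\, f_0) = 1$ forces the real-valued continuous function $f_0$ to take integer values. Should one wish to avoid invoking the bounded-generator theorem, an equivalent hands-on route is to fix $y \in X_B$, observe that $t \mapsto u_t(y)$ is a continuous character of $\T$, hence of the form $e^{2\pi\sqrt{-1}\, n(y)\, t}$ for a unique $n(y) \in \Z$, set $f_0(y) = n(y)$, and then deduce local constancy of $y \mapsto f_0(y)$ from the norm continuity of $u$ at $t = 0$ together with the compactness of $X_B$.
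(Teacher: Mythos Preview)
Your proof is correct, but it follows a genuinely different route from the paper's. The paper argues via K-theory: the representation $u$ induces a $*$-homomorphism $\varphi^u: C^*(\T) \to \DB$, and the induced map on $K_0$, identifying $K_0(C^*(\T)) \cong \bigoplus_{\chi \in \hat{\T}} \Z$ and $K_0(\DB) \cong C(X_B,\Z)$ (the latter using that $X_B$ is totally disconnected), sends the class of the identity character $\chi_{\id}$ to the desired $f_0$. Your approach is more elementary and self-contained: you invoke the bounded generator of a norm-continuous one-parameter unitary group, then use the periodicity $u_1 = 1$ to force integer values of the resulting real function. The advantage of your argument is that it requires no K-theoretic machinery and makes the construction of $f_0$ completely explicit; the paper's argument, on the other hand, places the lemma in a more conceptual framework and makes clear that $f_0$ is nothing but the spectral multiplicity function of $u$. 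Your alternative pointwise argument (characters of $\T$ at each $y \in X_B$, plus local constancy from norm continuity) is also valid and is perhaps the most direct of all three.
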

\begin{proof}
For a unitary representation $u$ of $\T$ into $\DB$,
there exists a $*$-homomorphism
$\varphi^u$ 
from the group $C^*$-algebra 
$C^*(\T)$ of $\T$ to $\DB$
in a natural way.
It induces a homomorphism
$\varphi^u_*: K_0(C^*(\T)) \rightarrow K_0(\DB)$
on their K-groups.
Let
$\chi_{\id}$ denote the identity representation
$\chi_{\id}(s) = s, s \in \T$
of $\T$.
As
$K_0(C^*(\T))= \oplus_{\chi \in \hat{\T}} \Z$
and
$K_0(\DB) = C(X_B,\Z)$,
by putting
$f_0 = \varphi^u_*(\chi_{\id}) \in C(X_B,\Z)$,
one has 
$u_t= \exp({2\pi \sqrt{-1} t f_0}) 
$ for all
$t \in \T.
$
\end{proof}
We thus have the converse of the above proposition
in the following way.
\begin{proposition}
If there exist
a unitary representation $u$ of $\T$ into $\DB$
and an isomorphism 
$\varPhi:\OA \rightarrow \OB$ such that 
$\varPhi(\DA) = \DB$
and
$
\varPhi \circ \rho^A_t = \Ad(u_t)\circ \rho^B_t \circ \varPhi 
$
for
$ t \in {\mathbb{T}}$,
then 
$(X_A, \sigma_A)\underset{\SCOE}{\sim}(X_B,\sigma_B)$.
\end{proposition}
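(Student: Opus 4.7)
The plan is to extract from $\varPhi$ a continuous orbit equivalence $h:X_A\to X_B$ by invoking the classification Theorem 1.1, and then to use the cocycle conjugacy relation with $u$ to upgrade it to a strongly continuous one.

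First, since $\varPhi:\OA\to\OB$ is an isomorphism with $\varPhi(\DA)=\DB$, the implication (iii)$\Rightarrow$(i) of Theorem 1.1 supplies a continuous orbit homeomorphism $h:(X_A,\sigma_A)\to(X_B,\sigma_B)$ together with continuous functions $k_1,l_1\in C(X_A,\Zp)$ and $k_2,l_2\in C(X_B,\Zp)$ satisfying \eqref{eq:scoe1} and \eqref{eq:scoe2}. Put $c_1=l_1-k_1$. By the characterization of SCOE established in Section~4, it is enough to prove $[c_1]=[1_A]\in H^A$. Next, by the preceding lemma applied to $u$, there is $f_0\in C(X_B,\Z)$ with $u_t=\exp(2\pi\sqrt{-1}\,tf_0)$ for all $t\in\T$, and after adding a constant we may assume $f_0\ge 0$.

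Second, I would retrace the spectral computation in the proof of Proposition \ref{prop:cocycleconjugacy} in reverse. Normalizing so that $k_1\equiv K_1$ is constant (by uniformly enlarging $k_1$ and $l_1$), the conditions $\varPhi(\DA)=\DB$ and \eqref{eq:scoe1} force $\varPhi(S_i^A)$ to take the form
\begin{equation*}
\varPhi(S_i^A)=\sum_{\nu\in W^{(i)}}\sum_{\xi\in B_{K_1}(X_B)}S_\nu^B(S_\xi^B)^*Q_\nu^{(i)}
\end{equation*}
appearing in that proof. Applying the identity $\varPhi\circ\rho^A_t=\Ad(u_t)\circ\rho^B_t\circ\varPhi$ to $S_i^A$ and evaluating at $e_y^B$ with $y\in X_B^{(i)}$, the unique nonvanishing summand (indexed by the unique $\nu\in W^{(i)}$ with $y\in E_\nu^{(i)}$ and by $\xi=y_{[1,K_1]}$) contributes equal scalar factors on both sides. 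Exactly as in Proposition \ref{prop:cocycleconjugacy}, this forces
\begin{equation*}
l_1(z)-k_1(z)-f_0(y)+f_0(h(z))=1,\qquad z=ih^{-1}(y).
\end{equation*}
Every $z\in X_A$ arises this way with $i=z_1$ and $y=h(\sigma_A(z))$, so $c_1(z)=1+f_0(h(\sigma_A(z)))-f_0(h(z))$ for all $z\in X_A$. Setting $b_1(x):=N-f_0(h(x))$ for some $N\in\N$ large enough to guarantee $b_1\ge 0$, we obtain $c_1(x)=1+b_1(x)-b_1(\sigma_A(x))$, hence $[c_1]=[1_A]$ in $H^A$, so $(X_A,\sigma_A)\underset{\SCOE}{\sim}(X_B,\sigma_B)$.

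The main obstacle is justifying in step two that the abstract isomorphism $\varPhi$, known only to satisfy $\varPhi(\DA)=\DB$ and the cocycle conjugacy relation, really acts on the generators by the specific formula above. One route is to compose $\varPhi$ with the $\Ad(U_h)$-type isomorphism $\Psi_h$ built directly from $h$ as in the proof of Proposition \ref{prop:cocycleconjugacy}: the resulting automorphism $\varPhi\circ\Psi_h^{-1}$ of $\OB$ fixes $\DB$, and the given cocycle conjugacy relation is transported to an analogous identity in which the unitary cocycle again lies in $U(\DB)$, now differing from some explicit $\exp(2\pi\sqrt{-1}\,t b_2^{(h)})$ by a $\DB$-valued coboundary. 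This reduces the verification of $[c_1]=[1_A]$ to the same routine gauge-weight comparison performed in Proposition \ref{prop:cocycleconjugacy}, read backwards, with $f_0$ and $b_2^{(h)}$ differing by a coboundary that does not affect the cohomology class of $c_1$.
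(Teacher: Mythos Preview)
Your overall plan coincides with the paper's: extract $h$ from $\varPhi|_{\DA}$, compute on the generators $S_i^A$, and compare phases to read off the coboundary identity for $c_1$. The difference is that the paper does not merely invoke Theorem~1.1 to obtain \emph{some} continuous orbit homeomorphism $h$; it cites \cite{MaPacific} for the sharper fact that, after representing $\OA$ on $\HA$ and $\OB$ on $\HB$ as in \eqref{eq:siAex}, the given isomorphism is \emph{spatially implemented}: $\varPhi=\Ad(U_h)$ for the $h$ Gelfand-dual to $\varPhi|_{\DA}$. With this in hand the expression for $\varPhi(S_i^A)$ is literally the one already computed in the proof of Proposition~\ref{prop:cocycleconjugacy}, and the phase comparison $l_1(z)-k_1(z)-f_0(y)+f_0(h(z))=1$ (with $y=h(\sigma_A(z))$) drops out immediately. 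Setting $b_1=f_0\circ h$ (or, as you more carefully write, $b_1=N-f_0\circ h$ to match the sign in \eqref{eq:scoe3}) then finishes the argument.

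The genuine gap in your write-up is exactly the step you flag as ``the main obstacle'': the claim that $\varPhi(\DA)=\DB$ and \eqref{eq:scoe1} alone force the displayed formula for $\varPhi(S_i^A)$ is not correct as stated, since two isomorphisms $\OA\to\OB$ agreeing on $\DA$ can differ by a nontrivial $\DB$-fixing automorphism of $\OB$. Your proposed workaround (compose $\varPhi$ with $\Psi_h^{-1}=\Ad(U_h)^{-1}$ and analyze the resulting $\DB$-fixing automorphism) heads in a reasonable direction but is not carried out, and appealing to a cocycle $\exp(2\pi\sqrt{-1}\,t\,b_2^{(h)})$ for $\Psi_h$ is circular: Proposition~\ref{prop:cocycleconjugacy} produces such a cocycle only \emph{after} one knows $h$ is a strongly continuous orbit equivalence, which is precisely what you are trying to prove. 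The paper sidesteps all of this by using the spatial implementation $\varPhi=\Ad(U_h)$ from \cite{MaPacific} directly.
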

\begin{proof}
For the unitary representation
$u_t \in U(\DB)$,
one may take
a continuous function
$f_0 \in C(X_B, \Z)$ such that 
$
u_t= \exp({2\pi \sqrt{-1} t f_0}),
t \in \T.
$
Represent the algebras $\OA$ on 
$\HA$ and $\OB$ on $\HB$ by \eqref{eq:siAex}.
Since
the isomorphism 
$\varPhi:\OA \rightarrow \OB$ satisfies  
$\varPhi(\DA) = \DB$,
the one-sided topological Markov shifts
$(X_A, \sigma_A)$
and
$(X_B,\sigma_B)$
are continuously orbit equivalent
by 
\cite{MaPacific},
so that there exists a continuous orbit
homeomorphism
$h:X_A \rightarrow X_B$
such that
$\varPhi = \Ad(U_h)$,
where 
$U_h :\HA \rightarrow \HB$ 
is the unitary defined 
by 
$U_he_x^A = e_{h(x)}^B$ for $x \in X_A$.
Let 
$k_1:X_A\rightarrow \Zp$
and
$l_1:X_B\rightarrow \Zp$
be continuous functions satisfying
\eqref{eq:orbiteq1x}. 
For $ i=1,\dots, N$ and $y \in X_B^{(i)}$
put
$z = ih^{-1}(y) \in X_A$.
As in the proof of the preceding proposition, 
one sees that
\begin{align*}
  & (\Ad(u_t)\circ \rho^B_t \circ \Ad(U_h)) (S_i^A) e_y^B \\
= & \exp({2 \pi \sqrt{-1}(l_1(z) - k_1(z) - f_0(y) + f_0(h(ih^{-1}(y))))t})
     e_{h(ih^{-1}(y))}^B 
\end{align*}
and
\begin{equation*}     
\Ad(U_h)( \rho_t^A(S_i^A )) e_y^B
=\exp({2 \pi \sqrt{-1}t}) e_{h(ih^{-1}(y))}^B.
\end{equation*}
Since
$
\Ad(U_h) \circ \rho^A_t 
= 
\Ad(u_t)\circ \rho^B_t \circ \Ad(U_h)
$
for
$ 
t \in {\mathbb{T}},
$
it follows that
\begin{equation*} 
l_1(z) - k_1(z) - f_0(y) + f_0(h(ih^{-1}(y)))-1 =0.
\end{equation*} 
By putting
$b_1(z) = f_0(h(z))$, we have
$ 
l_1(z) - k_1(z) = 1 + b_1(z) - b_1(\sigma_A(z))
$ 
so that 
$(X_A, \sigma_A)\underset{\SCOE}{\sim}(X_B,\sigma_B)$.
\end{proof}
We note the following lemma.
\begin{lemma}
Let $v_t \in U(\OB), t \in \T$
be a one-cocycle for the gauge action $\rho^B$ on $\OB$.
If there exists an isomorphism
$\Psi:\OA \rightarrow \OB$
such that 
$\Psi(\DA) = \DB$
and
$
\Psi \circ \rho^A_t = \Ad(v_t)\circ \rho^B_t \circ \Psi 
$
for
$ t \in {\mathbb{T}}$,
then 
$v_t$ belongs to $\DB$ and hence
$v_{t+s} = v_t v_s, t,s \in \T$.
\end{lemma}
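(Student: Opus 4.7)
The plan is to extract two facts from the hypotheses: that the gauge action fixes the diagonal algebras pointwise, and that $\DB$ is a maximal abelian $C^*$-subalgebra (masa) of $\OB$. Together these will force each $v_t$ to lie in $\DB$, and once that is established, the group law $v_{t+s}=v_tv_s$ will drop out of the one-cocycle identity immediately.

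First I would recall that every generator of $\DA$ has the form $S_{i_1}\cdots S_{i_n}S_{i_n}^*\cdots S_{i_1}^*$, which is a word of balanced gauge weight $0$. Hence $\rho^A_t(d)=d$ for every $d\in\DA$, and similarly $\rho^B_t(d')=d'$ for every $d'\in\DB$. Applying the intertwining relation $\Psi\circ\rho^A_t=\Ad(v_t)\circ\rho^B_t\circ\Psi$ to an arbitrary $d\in\DA$ therefore yields
\begin{equation*}
\Psi(d)=\Psi(\rho^A_t(d))=\Ad(v_t)(\rho^B_t(\Psi(d)))=v_t\,\Psi(d)\,v_t^*,
\end{equation*}
where the last equality uses that $\Psi(d)\in\Psi(\DA)=\DB$ is fixed by $\rho^B_t$. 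This says $v_t$ commutes with every element of $\Psi(\DA)=\DB$.

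Next, since $\DB$ is the canonical maximal abelian $C^*$-subalgebra of $\OB$, its commutant inside $\OB$ equals $\DB$ itself. The previous step therefore forces $v_t\in\DB$ for every $t\in\T$.

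Finally, with $v_s\in\DB$ and $\rho^B_t$ acting trivially on $\DB$, the one-cocycle identity collapses to
\begin{equation*}
v_{t+s}=v_t\,\rho^B_t(v_s)=v_t v_s,\qquad t,s\in\T,
\end{equation*}
which is what we wanted. I do not anticipate any genuine obstacle: the only point that needs care is invoking maximality of $\DB$ correctly, since that is what converts "commutes with $\DB$" into "lies in $\DB$"; this is a standard property of the canonical diagonal in a Cuntz–Krieger algebra and is already in use throughout the paper.
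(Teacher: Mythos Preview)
Your proof is correct and follows essentially the same approach as the paper: apply the intertwining relation to an element of $\DA$, use that the gauge actions fix the diagonals pointwise to deduce that $v_t$ commutes with $\DB$, invoke maximality of $\DB$ to force $v_t\in\DB$, and then collapse the cocycle identity. The only difference is that you spell out the final step $v_{t+s}=v_t\rho^B_t(v_s)=v_tv_s$ explicitly, whereas the paper leaves this implicit.
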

\begin{proof}
For $f \in \DA$,
we have
$
\Psi(\rho^A_t(f)) = v_t(\rho^B_t(\Psi(f)))v_t^*. 
$
As
$
\rho^A_t(f) =f
$
and
$
\rho^B_t(\Psi(f))
= \Psi(f)$,
we see that
$\Psi(f) v_t = v_t \Psi(f)$.
Since 
$\Psi(\DA) = \DB$ 
and
$\DB$ is a maximal commutative $C^*$-subalgebra 
of $\OB$,
the unitarye
$v_t$  belongs to $\DB$.
\end{proof}
Consequently  we have
the following theorem.
\begin{theorem}\label{thm:TFAE}
The following two assertions are equivalent.
\begin{enumerate}
\renewcommand{\theenumi}{\roman{enumi}}
\renewcommand{\labelenumi}{\textup{(\theenumi)}}
\item
One-sided topological Markov shifts
$(X_A, \sigma_A)$ and $(X_B,\sigma_B)$
are strongly continuous orbit equivalent.
\item
There exist
a unitary one-cocycle $v_t \in \OB, t \in \T$
for the gauge action $\rho^B$ on $\OB$
and
 an isomorphism
$\Phi:\OA \rightarrow \OB$ such that 
\begin{equation*}
\Phi(\DA) = \DB
\quad 
\text{  and }
\quad
\Phi \circ \rho^A_t = \Ad(v_t)\circ \rho^B_t \circ \Phi, 
\qquad t \in {\mathbb{T}}.
\end{equation*}
\end{enumerate}
\end{theorem}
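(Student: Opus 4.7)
The plan is to assemble the theorem from the three preceding results: Proposition \ref{prop:cocycleconjugacy}, the partial converse following it, and the lemma showing that any intertwining one-cocycle must live in $\DB$. All the analytic work has been done, so what remains is a clean bookkeeping argument.

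For (i) $\Rightarrow$ (ii): I will apply Proposition \ref{prop:cocycleconjugacy} directly. It produces an isomorphism $\Phi = \Ad(U_h): \OA \to \OB$ with $\Phi(\DA) = \DB$ satisfying $\Phi \circ \rho^A_t = \Ad(u_t^{b_2}) \circ \rho^B_t \circ \Phi$, where $u_t^{b_2} = \exp(2\pi\sqrt{-1}\,t\,b_2)$. The only extra point to verify is that $t \mapsto u_t^{b_2}$ is a \emph{one-cocycle} for $\rho^B$, i.e.\ $u_{t+s}^{b_2} = u_t^{b_2}\,\rho^B_t(u_s^{b_2})$. This is immediate, since $b_2 \in \DB$ is a continuous function on $X_B$ and the gauge action fixes $\DB$ pointwise, so $\rho^B_t(u_s^{b_2}) = u_s^{b_2}$ and the cocycle identity collapses to the trivial multiplicativity $u_{t+s}^{b_2} = u_t^{b_2} u_s^{b_2}$.

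For (ii) $\Rightarrow$ (i): Given $(v_t)_{t \in \T}$ and $\Phi$ as in (ii), my first step is to invoke the preceding lemma to deduce $v_t \in \DB$ for every $t$. The argument is a direct commutant computation: for any $f \in \DA$, one has $\rho^A_t(f) = f$ and $\rho^B_t(\Phi(f)) = \Phi(f)$, so the intertwining relation forces $v_t \Phi(f) = \Phi(f) v_t$; since $\Phi(\DA) = \DB$ is maximal abelian in $\OB$, the unitary $v_t$ lies in $\DB$. Once $v_t \in \DB$, the cocycle identity reduces to $v_{t+s} = v_t v_s$ (again because $\rho^B$ fixes $\DB$), so $v$ becomes a genuine unitary representation of $\T$ into $\DB$. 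I then invoke the proposition preceding the lemma, which is exactly the statement that such a datum yields $(X_A, \sigma_A)\underset{\SCOE}{\sim}(X_B,\sigma_B)$.

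There is essentially no hard step left; the work is organizational. The subtle point to be careful about is simply that the hypothesis in (ii) allows $v_t \in U(\OB)$ a priori, whereas the converse proposition stated earlier needs $v_t \in \DB$. The lemma bridges exactly this gap via maximal commutativity of $\DB$, and this is the only place where the hypothesis $\Phi(\DA) = \DB$ is used after appealing to the converse proposition. Once both directions are assembled, the theorem follows.
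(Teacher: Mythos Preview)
Your proposal is correct and follows essentially the same route as the paper: the theorem is stated with no further proof beyond the word ``Consequently,'' and is assembled exactly as you describe from Proposition~\ref{prop:cocycleconjugacy}, the converse proposition, and the lemma showing any intertwining one-cocycle must lie in $\DB$. Your identification of the one subtle point (that the lemma is what bridges the gap between $v_t \in U(\OB)$ and the hypothesis $v_t \in U(\DB)$ of the converse proposition) is precisely the organizational content the paper leaves implicit.
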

As it is well-known that
a cocycle conjugate covariant system of a locally compact abelian group
yields a conjugate dual covariant system,
we have the following corollary.
\begin{corollary}\label{cor:crossedgauge}
Assume that $(X_A, \sigma_A)\underset{\SCOE}{\sim}(X_B,\sigma_B)$.
Then the dual actions of their $C^*$-crossed products 
are isomorphic:
\begin{equation*}
(\OA{\times}_{\rho^A}{\mathbb{T}}, \hat{\rho}^A, \Z)
 \cong
(\OB{\times}_{\rho^B}{\mathbb{T}}, \hat{\rho}^B, \Z).
\end{equation*}
\end{corollary}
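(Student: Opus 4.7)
The plan is to invoke Theorem \ref{thm:TFAE} and then apply the standard principle that cocycle conjugacy of covariant systems of a locally compact abelian group passes to the crossed products in a way that intertwines the dual actions. Concretely, since Theorem \ref{thm:TFAE} supplies an isomorphism $\Phi:\OA\rightarrow\OB$ together with a unitary one-cocycle $\{v_t\}_{t\in\T}\subset U(\OB)$ for $\rho^B$ satisfying $\Phi\circ\rho^A_t=\Ad(v_t)\circ\rho^B_t\circ\Phi$, my job is to upgrade the pair $(\Phi,v)$ to a $\Z$-equivariant isomorphism between the dual covariant systems.

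First I would write down the candidate isomorphism $\tilde\Phi:\OA\times_{\rho^A}\T\rightarrow\OB\times_{\rho^B}\T$ defined on the canonical generators by $\tilde\Phi(a)=\Phi(a)$ for $a\in\OA$ and $\tilde\Phi(\lambda^A_t)=v_t\lambda^B_t$, where $\lambda^A_t$ and $\lambda^B_t$ denote the unitaries implementing the gauge actions in the multiplier algebras of the respective crossed products. The cocycle identity $v_{t+s}=v_t\rho^B_t(v_s)$ and the strong continuity of $t\mapsto v_t$ together ensure that $t\mapsto v_t\lambda^B_t$ is a strongly continuous unitary representation of $\T$, while the intertwining relation guarantees the covariance condition
\begin{equation*}
\tilde\Phi(\lambda^A_t)\tilde\Phi(a)\tilde\Phi(\lambda^A_t)^{*}
= v_t\rho^B_t(\Phi(a))v_t^{*}
= \Phi(\rho^A_t(a))
= \tilde\Phi(\lambda^A_t a\lambda^{A*}_t).
\end{equation*}
Hence $\tilde\Phi$ extends to a well-defined $*$-homomorphism, and an analogous construction starting from $\Phi^{-1}$ and the dual cocycle $\Phi^{-1}(v_t^{*})$ yields a two-sided inverse.

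Finally I would verify that $\tilde\Phi$ carries $\hat\rho^A$ to $\hat\rho^B$. For $n\in\Z=\hat\T$ the dual action fixes $\OA$ pointwise and sends $\lambda^A_t$ to $e^{2\pi\sqrt{-1}nt}\lambda^A_t$, and similarly on the $B$-side. Because $v_t\in\OB$ is fixed by $\hat\rho^B_n$, one computes
\begin{equation*}
\tilde\Phi(\hat\rho^A_n(\lambda^A_t))
= e^{2\pi\sqrt{-1}nt}v_t\lambda^B_t
= \hat\rho^B_n(v_t\lambda^B_t)
= \hat\rho^B_n(\tilde\Phi(\lambda^A_t)),
\end{equation*}
which together with the trivial equivariance on the copy of $\OA$ gives the desired conjugacy of the dual systems.

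The argument is essentially a bookkeeping exercise, and I do not anticipate a real obstacle: the nontrivial content has already been packed into Theorem \ref{thm:TFAE}. The only point that requires a word of care is that the cocycle $\{v_t\}$ actually lies in $\OB$ (not merely in a multiplier algebra) and is strongly continuous, so that the perturbed representation $t\mapsto v_t\lambda^B_t$ makes sense inside $\OB\times_{\rho^B}\T$; both facts are built into the hypothesis supplied by Theorem \ref{thm:TFAE}.
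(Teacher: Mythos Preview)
Your proposal is correct and follows exactly the paper's approach: the paper simply invokes Theorem \ref{thm:TFAE} and then appeals to the well-known fact that cocycle conjugate covariant systems of a locally compact abelian group have conjugate dual systems, without spelling out the details. Your write-up merely makes this standard argument explicit, so there is nothing to add.
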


%%%%%%%%%%%%%%%%%%%%%%%%%%%%%%%%%%%%%%%%%%%%%%
%%%%%%%%%%%%%%%%%%%%%%%%%%%%%%%%%%%%%%%%%%%%%%
\section{Examples}
%%%%%%%%%%%%%%%%%%%%%%%%%%%%%%
%%%%%%%%%%%%%%%%%%%%%%%%%%%%%%%
{\bf 1.} 
Let $A$ and $B$ be the following matrices:
\begin{equation}
A=
\begin{bmatrix}
1 & 1 \\
1 & 1
\end{bmatrix},
\qquad
B=
\begin{bmatrix}
1 & 1 \\
1 & 0 
\end{bmatrix}.
\end{equation}
They are both irreducible and satisfy condition (I).
The one-sided topological Markov shifts 
$(X_A,\sigma_A)$ and $(X_B,\sigma_B)$
are continuously orbit equivalent as in \cite{MaPacific}.
This continuous orbit equivalence  also
comes from the fact that
their Cuntz-Krieger algebras $\OA$ and $\OB$ are isomorphic and
$\det(\id -A) = \det(\id -B)$ by \cite{MMKyoto}.
Since their Perron eigenvalues of $A$ and of $B$ 
are different, the topological entropy of
the two-sided topological Markov shifts
$(\bar{X}_A, \bar{\sigma}_A)$
and
$(\bar{X}_B, \bar{\sigma}_B)$
are different so that they are not topologically conjugate as
two-sided subshifts.

\medskip

{\bf 2.}
If one-sided topological Markov shifts 
$(X_A, \sigma_A)$ and 
$(X_B, \sigma_B)$
are topologically conjugate,
one may take a homeomorphism
$h: X_A \rightarrow X_B$ 
such that $k_1(x) =0, l_1(x)=1$ for all $x \in X_A$,
so that
$c_1(x) =1$ for all $x \in X_A$
and hence
$(X_A, \sigma_A)$ and 
$(X_B, \sigma_B)$
are strongly continuous orbit equivalent. 
We will present an example of 
one-sided topological Markov shifts 
$(X_A, \sigma_A)$ and 
$(X_B, \sigma_B)$
such that 
they are not topologically conjugate,
but they are strongly continuous orbit equivalent. 
Let $A$ and $B$ be the following matrices:
\begin{equation}
A=
\begin{bmatrix}
1 & 1 \\
1 & 1
\end{bmatrix},
\qquad
B=
\begin{bmatrix}
1 & 1& 0 \\
1 & 0& 1 \\
1 & 0& 1 
\end{bmatrix}.
\end{equation}
They are both irreducible and satisfy condition (I).
Since the total column amalgamation of $B$ is it-self,
their one-sided topological Markov shifts
$(X_A, \sigma_A)$ and 
$(X_B, \sigma_B)$
are not topologically conjugate (\cite{Kitchens}, \cite{Wi}). 
We will show the following theorem.
\begin{theorem} \label{thm:ExSCOE}
The one-sided topological Markov shifts 
$(X_A, \sigma_A)$ and 
$(X_B, \sigma_B)$
are strongly continuous orbit equivalent.
\end{theorem}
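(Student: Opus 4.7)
The plan is to construct an explicit homeomorphism $h:X_A\to X_B$, exhibit continuous functions $k_1,l_1:X_A\to\Zp$ and $k_2,l_2:X_B\to\Zp$ realizing the orbit-equivalence identities for $h$ and $h^{-1}$, and show that the cocycles $l_1-k_1$ and $l_2-k_2$ are cohomologous to the constant function $1$ in $H^A$ and $H^B$ respectively, which is exactly the condition of strong continuous orbit equivalence.

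The core observation is that rows $2$ and $3$ of $B$ coincide, so $A$ is the row-amalgamation of $B$. This is reflected in the $2$-block map $\Phi:\{1,2\}^{2}\to\{1,2,3\}$ defined by $\Phi(1,1)=1$, $\Phi(1,2)=2$, $\Phi(2,1)=1$, $\Phi(2,2)=3$, which upgrades each symbol $2$ of $X_A$ into either $2$ or $3$ in $X_B$ depending on whether the previous symbol is $1$ or $2$. Using $\Phi$, I would define $h$ piecewise on the clopen partition $X_A=\{x\mid x_1=1\}\sqcup\{x\mid x_1=2\}$ by
\begin{equation*}
h(x)=\begin{cases}(\Phi(x_n,x_{n+1}))_{n\geq 1} & \text{if } x_1=1,\\ (3,\Phi(x_1,x_2),\Phi(x_2,x_3),\dots) & \text{if } x_1=2.\end{cases}
\end{equation*}
A direct verification shows $h$ maps the first piece bijectively onto $\{y\in X_B\mid y_1\in\{1,2\}\}$ and the second bijectively onto $\{y\in X_B\mid y_1=3\}$, so $h$ is a homeomorphism; the point is that prepending $3$ on $\{x_1=2\}$ forces $h(x)_1=3$ there, preventing the collision that would otherwise occur when $x_2=1$.

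With $h$ in hand, I would verify the orbit-equivalence identity by case analysis on $(x_1,x_2)\in\{1,2\}^2$, computing $h(\sigma_A x)$ directly and comparing with small iterates $\sigma_B^{m}(h(x))$. This yields $(l_1-k_1)(x)$ equal to $1,0,2,1$ on the four length-$2$ cylinders respectively; setting $b_1(x)=1$ if $x_1=2$ and $0$ otherwise gives $l_1-k_1=1+b_1-b_1\circ\sigma_A$, so $[l_1-k_1]=[1]$ in $H^A$. A symmetric computation on the explicit inverse $h^{-1}$, whose formula splits on $\{y_1\in\{1,2\}\}$ versus $\{y_1=3\}$, produces $(l_2-k_2)(y)$ taking values $1,1,1,2,0,1$ on the six length-$2$ cylinders of $X_B$, cohomologous to $1$ via $b_2(y)=1$ if $y_1\in\{1,2\}$ and $0$ otherwise. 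This establishes strong continuous orbit equivalence.

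The main obstacle is constructing $h$ itself: the naive one-sided $2$-block code $x\mapsto(\Phi(x_n,x_{n+1}))_{n\geq 1}$ is continuous and shift-equivariant but $2$-to-$1$ on $\{x_2=1\}$, since $\Phi(x_1,1)=1$ independently of $x_1$, so the first symbol of $X_A$ must be re-encoded. Naive prefix rules depending only on $(x_1,x_2)$ preserve injectivity but miss cylinders such as $\{y\mid y_1=2,y_2=3\}$ or $\{y\mid y_1=3,y_2=1\}$ from the image. The piecewise definition above succeeds because the prefix $3$ is inserted only on the clopen $\{x_1=2\}$, making the images of the two pieces a disjoint clopen cover of $X_B$ while remaining compatible with the block code on the rest of the sequence.
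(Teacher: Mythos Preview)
Your proposal is correct, and the overall strategy---build an explicit homeomorphism, verify the orbit identities by case analysis on short cylinders, and exhibit coboundaries $b_1,b_2$ realising $c_i=1+b_i-b_i\circ\sigma$---is exactly the paper's. The specific homeomorphism, however, is different and somewhat cleaner than the one the paper uses.

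The paper's map sends the cylinder $\{x_1=\alpha\}$ to $\{y_1=1\}$ by \emph{prepending} the symbol~$1$ to the $2$-block code, and sends $\{x_1=\beta\}$ to $\{y_1\in\{2,3\}\}$ via a genuinely $3$-block prefix rule $\Phi(\beta,x_2,x_3)$ that does not factor as ``extra symbol $+$ block code.''  Your map instead leaves the $2$-block code untouched on $\{x_1=1\}$ (landing in $\{y_1\in\{1,2\}\}$) and prepends the single symbol~$3$ on $\{x_1=2\}$.  The resulting cocycle values are the mirror image of the paper's ($c_1=1,0,2,1$ on the four $2$-cylinders versus the paper's $1,2,0,1$), with correspondingly mirrored $b_1$.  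Both routes work; yours has the advantage that the inverse $h^{-1}$ is visibly $(1,\psi(y_1),\psi(y_2),\dots)$ on $\{y_1\in\{1,2\}\}$ and $(2,\psi(y_2),\psi(y_3),\dots)$ on $\{y_1=3\}$, making the case analysis for $c_2$ shorter than the paper's.  One minor remark: by Lemma~4.3 of the paper the conditions $[c_1]=[1_A]$ and $[c_2]=[1_B]$ are equivalent once $h$ is a continuous orbit homeomorphism, so in principle you only need to check one side---though verifying both directly, as you do, avoids invoking that lemma.
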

We will prove Theorem \ref{thm:ExSCOE} as follows.
Let us denote by 
$\Sigma_A = \{\alpha,\beta\}$
the symbols of the shift space $X_A$,
and
similarly
$\Sigma_B = \{ 1, 2, 3 \}$
those of $(X_B, \sigma_B)$,
 respectively.
We note that 
\begin{align*}
B_2(X_A) & = \{ (\alpha,\alpha), (\alpha,\beta),
              (\beta,\alpha), (\beta,\beta) \}, \\
B_2(X_B) & = \{ (1,1), (1,2),( 2,1), (2,3),(3,1), (3,3)\}.
\end{align*}
 Define the block maps
$\Phi$ and $ \varphi$
by
\begin{gather*}
\Phi(\alpha, \alpha) = (1,1), \quad 
\Phi(\beta, \beta, \alpha) = (2,1), \quad 
\Phi(\beta, \alpha, \alpha) = (3,1),  \\
\Phi(\alpha, \beta) = (1,2), \quad
\Phi(\beta, \beta, \beta) = (2,3), \quad 
\Phi(\beta, \alpha, \beta) = (3,3)
\end{gather*}
and
\begin{equation*}
\phi(\alpha, \beta) = 2, \quad
\phi(\beta, \beta) = 3, \quad
\phi(\alpha, \alpha) = 1, \quad
\phi(\beta, \alpha) = 1.
\end{equation*}
It is direct to see that 
the $2$-block map
$\phi : B_2(X_A) \rightarrow B_1(X_B)$
gives rise to a sliding block code
from $X_A$ to $X_B$.
For  $k,l\in \Zp$,
we denote by $\phi_\infty^{[-k,l]}$ 
the sliding block code with memory $k$ and anticipation $l$
(see \cite{LM}).
Define $h:X_A \rightarrow X_B$ by setting
for $x = (x_n)_{n \in \N} \in X_A$
\begin{align*}
  h(x_1,x_2,x_3,\dots ) 
=&
{\begin{cases}
(\Phi(x_1,x_2), \phi_\infty^{[0,1]}(x_2,x_3,\dots ) ) & \text{ if } 
x_1 = \alpha, \\
(\Phi(x_1,x_2,x_3), \phi_\infty^{[-1,0]}(x_3,x_4,\dots ) ) & \text{ if } 
x_1 = \beta  
\end{cases}} \\
=&
{\begin{cases}
(1,1, \phi_\infty^{[0,1]}(\sigma_A(x) )) & \text{ if } (x_1, x_2)=(\alpha, \alpha),\\
(1,2, \phi_\infty^{[0,1]}(\sigma_A(x) )) & \text{ if } (x_1, x_2)=(\alpha, \beta),\\
(2,1, \phi_\infty^{[-1,0]}(\sigma_A^2(x) )) & \text{ if } 
                  (x_1, x_2, x_3)=(\beta, \beta, \alpha), \\
(2,3, \phi_\infty^{[-1,0]}(\sigma_A^2(x) )) & \text{ if } 
                  (x_1, x_2, x_3)=(\beta, \beta, \beta), \\ 
(3,1, \phi_\infty^{[-1,0]}(\sigma_A^2(x) )) & \text{ if } 
                  (x_1, x_2, x_3)=(\beta, \alpha, \alpha),\\
(3,3, \phi_\infty^{[-1,0]}(\sigma_A^2(x) )) & \text{ if } 
                  (x_1, x_2, x_3)=(\beta, \alpha, \beta). 
\end{cases}} 
\end{align*}
We note that $h(x)$ belongs to $X_B$ for all $x\in X_A$ 
because of the following equalities, where
$h(x)_{[1,3]}$ denotes the first three symbols of $h(x)$.
\begin{equation*}
h(x)_{[1,3]}
=
\begin{cases}
(1,1,1) &\text{ if } (x_1, x_2, x_3) = (\alpha, \alpha, \alpha),\\
(1,1,2) &\text{ if } (x_1, x_2, x_3) = (\alpha, \alpha, \beta), \\
(1,2,1) &\text{ if } (x_1, x_2, x_3) = (\alpha, \beta, \alpha),\\
(1,2,3) &\text{ if } (x_1, x_2, x_3) = (\alpha, \beta, \beta), \\ 
(2,1,1) &\text{ if } (x_1, x_2, x_3, x_4) = (\beta, \beta, \alpha, \alpha),\\
(2,1,2) &\text{ if } (x_1, x_2, x_3, x_4) = (\beta, \beta, \alpha, \beta),\\
(2,3,1) &\text{ if } (x_1, x_2, x_3, x_4) = (\beta, \beta, \beta, \alpha),\\
(2,3,3) &\text{ if } (x_1, x_2, x_3, x_4) = (\beta, \beta, \beta, \beta), \\
(3,1,1) &\text{ if } (x_1, x_2, x_3, x_4) = (\beta, \alpha, \alpha, \alpha),\\
(3,1,2) &\text{ if } (x_1, x_2, x_3, x_4) = (\beta, \alpha, \alpha, \beta), \\
(3,3,1) &\text{ if } (x_1, x_2, x_3, x_4) = (\beta, \alpha, \beta, \alpha),\\
(3,3,3) &\text{ if } (x_1, x_2, x_3, x_4) = (\beta, \alpha, \beta, \beta).  
\end{cases}
\end{equation*}
%Recall that $U_\mu$ stands for the cylinder set for a word $\mu$.
We set
\begin{equation*}
l_1(x)
=
\begin{cases}
1 &\text{ if } (x_1,x_2) = (\alpha, \alpha), \\
4 &\text{ if } (x_1,x_2) = (\alpha, \beta), \\
2 &\text{ if } (x_1,x_2) = (\beta, \alpha),\\
3 &\text{ if } (x_1,x_2) = (\beta, \beta), 
\end{cases}
\qquad
k_1(x)
=
\begin{cases}
0 &\text{ if } (x_1,x_2) = (\alpha, \alpha), \\
2 &\text{ if } (x_1,x_2) = (\alpha, \beta), \\
2 &\text{ if } (x_1,x_2) = (\beta, \alpha),\\
2 &\text{ if } (x_1,x_2) = (\beta, \beta) 
\end{cases}
\end{equation*}
so that we have
\begin{equation*}
\sigma_B^{k_1(x)} (h(\sigma_A(x))) = \sigma_B^{l_1(x)}(h(x))
\quad \text{ for}
\quad x \in X_A
\end{equation*}
and
\begin{equation*}
c_1(x)
=
\begin{cases}
1 &\text{ if } (x_1,x_2) = (\alpha, \alpha), \\
2 &\text{ if } (x_1,x_2) = (\alpha, \beta), \\
0 &\text{ if } (x_1,x_2) = (\beta, \alpha),\\
1 &\text{ if } (x_1,x_2) = (\beta, \beta). 
\end{cases}
\end{equation*}
Define 
a continuous function $b_1: X_A\rightarrow \N$
by
\begin{equation*}
b_1(x)
=
\begin{cases}
2 &\text{ if } x_1 = \alpha,\\
1 &\text{ if } x_1 = \beta. 
\end{cases}
\end{equation*}
Since
\begin{equation*}
b_1(x) - b_1(\sigma_A(x))
=
\begin{cases}
0= c_1(x) -1 &\text{ if } (x_1,x_2) = (\alpha, \alpha), \\
1= c_1(x) -1 &\text{ if } (x_1,x_2) = (\alpha, \beta), \\
-1= c_1(x) -1 &\text{ if } (x_1,x_2) = (\beta, \alpha),\\
 0= c_1(x) -1 &\text{ if } (x_1,x_2) = (\beta, \beta), 
\end{cases}
\end{equation*}
we have
\begin{equation*}
c_1(x) = 1 +b_1(x) - b_1(\sigma_A(x)), \qquad x \in X_A.
\end{equation*}
This implies the following lemma.
\begin{lemma}
 $h:X_A \rightarrow X_B$ is a strongly continuous orbit map.
\end{lemma}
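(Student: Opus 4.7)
The lemma requires verifying the three items in Definition \ref{defn:storbitmap}: that $h$ is a local homeomorphism, that the orbit equation $\sigma_B^{k_1(x)}(h(\sigma_A(x))) = \sigma_B^{l_1(x)}(h(x))$ holds with the displayed functions $k_1, l_1$, and that $\Psi_h(1_B)(x) = 1 + b_1(x) - b_1(\sigma_A(x))$. The third item, namely $c_1(x) = 1 + b_1(x) - b_1(\sigma_A(x))$, has already been tabulated in the four-case calculation immediately preceding the lemma, so my plan focuses on the remaining two items.

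For the orbit equation, I would run a case analysis on $(x_1, x_2) \in B_2(X_A)$. On each of the four cylinders $U_{x_1 x_2}$, the values of $k_1(x), l_1(x)$ are constant, and the formulas in the excerpt give an explicit description of $h(x)$ and $h(\sigma_A(x))$ as a short $\Phi$-window followed by a sliding $\phi$-image tail. Applying $\sigma_B^{k_1}$ and $\sigma_B^{l_1}$ strips off these initial windows, after which both sides agree with a common sequence of the form $(\phi(x_{n-1},x_n))_{n \ge N}$ (or a one-step shift of it). For example, when $(x_1,x_2)=(\alpha,\alpha)$ with $(k_1,l_1)=(0,1)$, the identity reduces to $h(\sigma_A(x)) = \sigma_B(h(x))$, which is immediate from the tabulated description of $h$; the other three cases are analogous.

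For the local-homeomorphism property, I would exhibit the inverse $h^{-1}: X_B \to X_A$ explicitly. Reading $y_1$ fixes $x_1 \in \{\alpha, \beta\}$; using $y_2$ (and $y_3$ when $y_1 \in \{2, 3\}$) together with the inverse of $\Phi$ then recovers $x_2, x_3$ (and $x_4$ in the $\beta$-case); and for the tail, the maps $\phi(\alpha, \cdot)$ and $\phi(\beta, \cdot)$ are each injective, so that knowing the current $x_n$ and the next $\phi$-symbol determines $x_{n+1}$ inductively. The lookups depend only on initial segments of $y$, so $h^{-1}$ is continuous; consistency with the table gives $h \circ h^{-1} = \id_{X_B}$ and $h^{-1} \circ h = \id_{X_A}$, which makes $h$ a homeomorphism between the compact Hausdorff spaces $X_A$ and $X_B$, and in particular a local homeomorphism.

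The main (mild) obstacle is the bookkeeping for the orbit equation: the tail of $h(x)$ uses memory-$0$ anticipation-$1$ when $x_1 = \alpha$ but memory-$1$ anticipation-$0$ when $x_1 = \beta$, so the index alignment of the shifted sequences differs across cases. Once this is tracked uniformly, the identity in all four cases follows from direct substitution, and the lemma follows by combining this with the already-established strong cocycle identity and the local-homeomorphism property.
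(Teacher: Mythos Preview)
Your proposal is correct and matches the paper's approach: the orbit equation and the cocycle identity are exactly the computations displayed before the lemma, and your explicit inverse is precisely the map $g$ that the paper constructs immediately afterward (and then checks $g\circ h=\id_{X_A}$, $h\circ g=\id_{X_B}$). The only difference is ordering---the paper defers the homeomorphism verification to after the parallel lemma for $g$, whereas you propose to handle it upfront.
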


We will next constuct the inverse of $h$.
 Define the block maps
$\Psi$ and $\psi$
by
\begin{gather*}
\Psi(1,1) =(\alpha, \alpha), \quad
\Psi(2,1) =(\beta, \beta, \alpha),  \quad
\Psi(3,1) =(\beta, \alpha, \alpha),  \\
\Psi(1,2) =(\alpha, \beta),  \quad 
\Psi(2,3) =(\beta, \beta, \beta),  \quad
\Psi(3,3) =(\beta, \alpha, \beta)
\end{gather*}
and
\begin{equation*}
\psi(1) = \alpha, \quad
\psi(2) = \beta, \quad
\psi(3) = \beta.
\end{equation*}
It is direct to see that 
the $1$-block map
$\psi : B_1(X_A) \rightarrow B_1(X_B)$
gives rise to a sliding block code
from $X_B$ to $X_A$.
%For  $k,l\in \Zp$, we denote by $\phi_{[k,l]}$ 
%the sliding block code with memory $k$ and anticipation $l$.
Define $g:X_B \rightarrow X_A$ 
by setting for $y = (y_n)_{n \in \N} \in X_B$
\begin{align*}
  g(y_1,y_2,y_3,y_4, \dots ) 
=&
{\begin{cases}
(\Psi(y_1,y_2), \psi_\infty^{[0,0]}(y_3,y_4,y_5,\dots ) ) & \text{ if } 
y_1 = 1, \\
(\Psi(y_1,y_2), \psi_\infty^{[-1,-1]}(y_3,y_4,y_5,\dots ) ) & \text{ if } 
y_1 = 2,3  
\end{cases}} \\
=&
{\begin{cases}
(\alpha, \alpha, \psi_\infty^{[0,0]}(\sigma_B^2(y) ) & \text{ if } (y_1, y_2)=(1,1),\\
(\alpha, \beta, \psi_\infty^{[0,0]}(\sigma_B^2(y) ) & \text{ if } (y_1, y_2)=(1,2),\\
(\beta, \beta, \alpha, \psi_\infty^{[-1,-1]}(\sigma_B^2(y) ) & \text{ if } (y_1, y_2)=(2,1),\\
(\beta, \beta, \beta, \psi_\infty^{[-1,-1]}(\sigma_B^2(y) ) & \text{ if } (y_1, y_2)=(2,3),\\
(\beta, \alpha, \alpha, \psi_\infty^{[-1,-1]}(\sigma_B^2(y) ) & \text{ if } (y_1, y_2)=(3,1),\\
(\beta, \alpha, \beta, \psi_\infty^{[-1,-1]}(\sigma_B^2(y) ) & \text{ if } (y_1, y_2)=(3,3).
\end{cases}} 
\end{align*}
We set
\begin{align*}
l_2(y)
& =
{\begin{cases}
3 &\text{ if } (y_1, y_2)=(1,1), (1,2), \\
%3 &\text{ if } y\in U_{12}, \\
4 &\text{ if } (y_1, y_2)=(2,1), (2,3), (3,1), (3,3),\\
%4 &\text{ if } y\in U_{23}, \\
%4 &\text{ if } y\in U_{31},\\
%4 &\text{ if } y\in U_{33}, 
\end{cases}} \\
k_2(y)
& =
{\begin{cases}
2 &\text{ if } (y_1, y_2)=(1,1), (2,1), (3,1), \\
3 &\text{ if } (y_1, y_2)=(1,2), (2,3), (3,3), \\
%2 &\text{ if } y\in U_{21},\\
%3 &\text{ if } y\in U_{23},\\
%2 &\text{ if } y\in U_{31},\\
%3 &\text{ if } y\in U_{33} 
\end{cases}}
\end{align*}
so that we have
\begin{equation*}
\sigma_A^{k_2(y)} (g(\sigma_B(y))) = \sigma_A^{l_2(y)}(g(y))
\quad \text{ for}
\quad y \in X_B
\end{equation*}
and
\begin{equation*}
c_2(y)
=
\begin{cases}
1 &\text{ if } (y_1, y_2)=(1,1), (2,3), (3,3), \\
0 &\text{ if } (y_1, y_2)=(1,2), \\
2 &\text{ if } (y_1, y_2)=(2,1), (3,1).
%1 &\text{ if } x\in U_{23}, \\ 
%2 &\text{ if } x\in U_{31},\\
%1 &\text{ if } x\in U_{33}.    
\end{cases}
\end{equation*}
Define 
a continuous function $b_2 : X_B\rightarrow \N$
by
\begin{equation*}
b_2(y)
=
\begin{cases}
1 &\text{ if } y_1 =1,\\
2 &\text{ if } y_1 =2, 3.
%2 &\text{ if } y\in U_{3}. 
\end{cases}
\end{equation*}
Since
\begin{equation*}
b_2(y) - b_2(\sigma_B(y))
=
\begin{cases}
 0= c_2(y) -1 &\text{ if } (y_1, y_2)=(1,1), \\
-1= c_2(y) -1 &\text{ if } (y_1, y_2)=(1,2), \\
 1= c_2(y) -1 &\text{ if } (y_1, y_2)=(2,1), (3,1),\\
 0= c_2(y) -1 &\text{ if } (y_1, y_2)=(2,3), (3,3),
%2-1 = 1= c_1(x) -1 &\text{ if } x\in U_{31},\\
%2-2 = 0= c_1(x) -1 &\text{ if } x\in U_{33},  
\end{cases}
\end{equation*}
we have
\begin{equation*}
c_2(y) = 1 +b_2(y) - b_2(\sigma_B(y)), \qquad y \in X_B.
\end{equation*}
This implies the following lemma.
\begin{lemma}
$g:X_B \rightarrow X_A$ is a strongly continuous orbit map.
\end{lemma}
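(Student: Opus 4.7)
The plan is to verify the three ingredients required by Definition \ref{defn:storbitmap}: that $g$ is a local homeomorphism, that $g$ together with the tabulated functions $k_2, l_2$ satisfies the orbit equation \eqref{eq:orbiteqx} (with the roles of $A,B$ swapped), and that $\Psi_g(1_A)$ equals $1 + b_2 - b_2 \circ \sigma_B$ for the explicit $b_2$ displayed above. The bulk of this data has already been spelled out in the excerpt; what remains is to stitch the pieces together and justify the steps that were asserted but not proved.

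First I would record that $g$ is continuous. On each of the six clopen cylinder sets cut out by the pair $(y_1,y_2) \in B_2(X_B)$, the map $g$ is manifestly a composition of a finite relabelling (given by $\Psi$) and a sliding block code induced by the one-block map $\psi$, so it is continuous on each cylinder and hence on all of $X_B$. To promote this to local homeomorphism, I would argue that $g$ is in fact the two-sided inverse of the map $h:X_A \to X_B$ constructed in the previous lemma, and hence, being a continuous bijection between compact Hausdorff spaces, is a homeomorphism. The verification that $g \circ h = \id_{X_A}$ and $h \circ g = \id_{X_B}$ is a finite case analysis: for each of the twelve prefixes $(x_1,x_2,x_3) \in B_3(X_A)$ (respectively $(y_1,y_2,y_3) \in B_3(X_B)$), one checks that the composition reproduces the correct initial segment, and that the tail behavior matches because the block maps $\Phi, \Psi$ and $\phi, \psi$ were designed to invert one another on the relevant cylinders.

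Next I would verify the orbit equation $\sigma_A^{k_2(y)}(g(\sigma_B(y))) = \sigma_A^{l_2(y)}(g(y))$ case by case in $(y_1,y_2) \in B_2(X_B)$. In each of the six cases one reads off $g(y)$ and $g(\sigma_B(y))$ from the explicit formulas for $g$, deletes the prescribed number of initial symbols ($l_2(y)$ on the left, $k_2(y)$ on the right), and checks that the two tails coincide; the design of $\Psi$ and $l_2, k_2$ was rigged precisely for this. Having the orbit equation, it is immediate from the definition \eqref{eq:Psihfx} of $\Psi_g$ applied to the constant function $1_A$ that
\begin{equation*}
\Psi_g(1_A)(y) = l_2(y) - k_2(y) = c_2(y), \qquad y \in X_B.
\end{equation*}

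Combining this with the tabulated identity $c_2(y) = 1 + b_2(y) - b_2(\sigma_B(y))$ already verified in the excerpt, condition \eqref{eq:b1} of Definition \ref{defn:storbitmap} holds with $b_1$ replaced by $b_2$, so $g$ is a strongly continuous orbit map. The only step I expect to be mildly delicate is the inversion argument for local homeomorphism: one must keep careful track of the shifted sliding block codes $\psi_\infty^{[0,0]}$ and $\psi_\infty^{[-1,-1]}$ on the tails so that the compositions $\psi \circ \phi$ and $\phi \circ \psi$ produce the correct symbols at every index, including at the boundary between the initial hand-coded block and the sliding-code tail. Once that bookkeeping is in place, the rest is a mechanical assembly.
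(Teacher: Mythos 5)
Your proposal is correct and follows essentially the same route as the paper: one checks the orbit equation for the tabulated $k_2,l_2$ case by case on the cylinders $(y_1,y_2)\in B_2(X_B)$, observes $\Psi_g(1_A)=c_2=l_2-k_2$, and matches this against the explicit $b_2$ via $c_2(y)=1+b_2(y)-b_2(\sigma_B(y))$. The only organizational difference is that you fold the verification $g=h^{-1}$ (hence local homeomorphism) into this lemma, whereas the paper carries out that finite case analysis immediately afterwards; the mathematical content is the same.
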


We will next show that 
$g,h$ are inverses to each other.

For 
$x_1 = \alpha$, we see
\begin{equation*}
%\Psi(\Phi(x_1,x_2))= 
\Psi(\Phi(\alpha,x_2))
 = 
{\begin{cases}
\Psi(1,1) =(\alpha, \alpha) & \text{ if } x_2 = \alpha, \\
\Psi(1,2) =(\alpha, \beta) & \text{ if } x_2 = \beta
\end{cases}} 
\end{equation*}
so that 
$\Psi(\Phi(x_1,x_2)) = (x_1,x_2)$.

For
$x_1 = \beta$, we see
\begin{equation*}
% \Psi(\Phi(x_1,x_2, x_3)) = 
 \Psi(\Phi(\beta,x_2, x_3)) 
 = 
{\begin{cases}
\Psi(2,1)=(\beta, \beta, \alpha) & \text{ if }  (x_2,x_3) =(\beta,  \alpha), \\
\Psi(2,3)= (\beta, \beta, \beta) & \text{ if }  (x_2,x_3) =(\beta, \beta), \\
\Psi(3,1)=(\beta, \alpha, \alpha) & \text{ if } (x_2,x_3) =(\alpha, \alpha), \\
\Psi(3,3)=(\beta, \alpha, \beta)  & \text{ if } (x_2,x_3) =(\alpha, \beta)
\end{cases}} 
 \end{equation*}
so that 
$\Psi(\Phi(x_1,x_2,x_3)) = (x_1,x_2,x_3)$.
It is easy to see that
the equalities
\begin{align*}
\psi(\phi(\alpha,x_1,x_2,\dots ))& = (x_1,x_2,\dots ), \\
\psi(\phi(\beta,x_1,x_2,\dots ))& = (x_1,x_2,\dots ) 
\end{align*}
hold so that
$
\psi\circ \phi = \sigma_A
$
on
$
X_A.
$
\begin{lemma}
$g(h(x)) = x$  for  $x \in X_A$.
\end{lemma}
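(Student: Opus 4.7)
The plan is to verify $g(h(x)) = x$ coordinate by coordinate, splitting on $x_1 \in \{\alpha, \beta\}$ to match the case structure in the definitions of $h$ and of $g$. In both cases, the argument reduces to two identities established just before the lemma: the block-map identity $\Psi \circ \Phi = \id$ on the admissible initial blocks of length two (when $x_1 = \alpha$) and length three (when $x_1 = \beta$), and the sliding-block identity $\psi \circ \phi = \sigma_A$ on sequences in $X_A$.

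First I handle $x_1 = \alpha$. By definition $h(x) = (\Phi(x_1, x_2), \phi_\infty^{[0,1]}(x_2, x_3, \ldots))$, and the first symbol of $\Phi(x_1, x_2)$ is $1$, so $g$ acts on $h(x)$ through its $y_1 = 1$ branch. The first two coordinates of $g(h(x))$ are then $\Psi(\Phi(x_1, x_2)) = (x_1, x_2)$, and the tail is $\psi_\infty^{[0,0]}(\phi_\infty^{[0,1]}(x_2, x_3, \ldots))$, which by $\psi \circ \phi = \sigma_A$ equals $\sigma_A(x_2, x_3, \ldots) = (x_3, x_4, \ldots)$. Concatenating gives $g(h(x)) = x$. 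For $x_1 = \beta$, now $h(x) = (\Phi(x_1, x_2, x_3), \phi_\infty^{[-1,0]}(x_3, x_4, \ldots))$ and its first symbol is $2$ or $3$, so $g$ acts through the $y_1 \in \{2, 3\}$ branch; the first three coordinates of $g(h(x))$ are $\Psi(\Phi(x_1, x_2, x_3)) = (x_1, x_2, x_3)$, while the tail $\psi_\infty^{[-1,-1]}(\phi_\infty^{[-1,0]}(x_3, x_4, \ldots))$ collapses by the same identity to $(x_4, x_5, \ldots)$.

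The main obstacle is the bookkeeping with the memory/anticipation indices $[0,1]$ vs.\ $[-1,0]$ on the $\phi$ side and $[0,0]$ vs.\ $[-1,-1]$ on the $\psi$ side: these are calibrated in the two cases precisely so that, after composition, $\psi_\infty \circ \phi_\infty$ leaves behind exactly one shift $\sigma_A$ acting on the tail of $x$. Once this cancellation is confirmed on a representative coordinate (say $n = 3$ in each case, using the explicit table for $h(x)_{[1,3]}$), the remaining verification is immediate from $\psi \circ \phi = \sigma_A$ and the lemma follows.
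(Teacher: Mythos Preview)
Your proposal is correct and follows essentially the same approach as the paper's own proof: split on $x_1 \in \{\alpha,\beta\}$, apply the definitions of $h$ and then $g$, and reduce to the two identities $\Psi\circ\Phi = \id$ on initial blocks and $\psi\circ\phi = \sigma_A$ on tails. The paper presents this as a single chain of displayed equalities, while you narrate the same computation and add commentary on the memory/anticipation indices, but the underlying argument is identical.
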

\begin{proof}
It follows that
\begin{align*}
g(h(x_1,x_2,x_3, \dots )) 
& = 
{\begin{cases}
g(\Phi(x_1,x_2), \phi(x_2,x_3, \dots ))  & \text{ if } x_1 = \alpha, \\
g(\Phi(x_1,x_2, x_3), \phi(x_3,x_4, \dots )) & \text{ if } x_1 = \beta 
\end{cases}} \\  
& = 
{\begin{cases}
(\Psi(\Phi(x_1,x_2)), \psi(\phi(x_2,x_3, \dots )))  
& \text{ if } x_1 = \alpha, \\
(\Psi(\Phi(x_1,x_2, x_3)), \psi(\phi(x_3,x_4, \dots ))) 
& \text{ if } x_1 = \beta 
\end{cases}} \\  
& = 
{\begin{cases}
(x_1,x_2, \sigma_A(x_2,x_3, \dots ))  
& \text{ if } x_1 = \alpha, \\
(x_1,x_2, x_3, \sigma_A(x_3,x_4, \dots )) 
& \text{ if } x_1 = \beta 
\end{cases}} \\  
& =(x_1, x_2, x_3, x_4, \dots ).
\end{align*}
\end{proof} 
We will finally prove that 
$h(g(y)) = y$ for all $y=(y_n)_{n \in \N} \in X_B$.
It is direct to see that
\begin{equation*}
\Phi(\Psi(y_1,y_2)) = (y_1,y_2) \qquad \text{ for }
\quad (y_1,y_2) \in B_2(X_B).
\end{equation*}
We have 
%for $(y_n)_{n \in \N} \in X_B$, 
\begin{align*}
\phi(\alpha, \psi(y_3,y_4, \dots )) 
&= (y_3, y_4,\dots ) \quad \text{ if } 
y_2 = 1, \\
\phi(\beta, \psi(y_3,y_4, \dots )) 
&= (y_3, y_4,\dots ) \quad \text{ if } 
y_2 = 2, 3.
\end{align*}
We set
$g(y) = (x_n)_{n \in \N} \in X_A$ 
%for  $y = (y_n)_{n \in \N}$.
As
\begin{align*}
(x_1,x_2) 
&=
{\begin{cases}
(\alpha, \alpha) & \text{ if } (y_1, y_2) =(1, 1),\\
(\alpha, \beta) & \text{ if }  (y_1, y_2) =(1, 2),
\end{cases}} \\
(x_1,x_2,x_3)
&=
{\begin{cases}
(\beta, \beta, \alpha) & \text{ if }  (y_1, y_2) =(2, 1),\\
(\beta, \beta, \beta) & \text{ if }   (y_1, y_2) =(2, 3),\\
(\beta, \alpha, \alpha) & \text{ if } (y_1, y_2) =(3, 1),\\
(\beta, \alpha, \beta) & \text{ if }  (y_1, y_2) =(3, 3),
\end{cases}} 
\end{align*}
we have 
\begin{align*}
h(g(y))
&
= h(\Psi(y_1,y_2), \psi(y_3,y_4,\dots )) \\
&
=
{\begin{cases}
(\Phi(\Psi(y_1,y_2)), \phi(x_2, \psi(y_3,y_4,\dots )))
& \text{ if } (y_1,y_2) = (1,1), (1,2),\\
(\Phi(\Psi(y_1,y_2)), \phi(x_3, \psi(y_3,y_4,\dots )))
& \text{ if } (y_1,y_2) = (2,1),(2,3), (3,1),(3,3)
\end{cases}} \\
&
=
{\begin{cases}
(\Phi(\Psi(y_1,y_2)), \phi(\alpha, \psi(y_3,y_4,\dots )))
& \text{ if } y_2 = 1, \\
(\Phi(\Psi(y_1,y_2)), \phi(\beta, \psi(y_3,y_4,\dots )))
& \text{ if } y_2 = 2,3
\end{cases}} \\
&
= (y_1, y_2, y_3,y_4,\dots ).
\end{align*}
Hence 
$h(g(y))=y$ for all $y \in X_B$
so that
$g = h^{-1}$,
and hence  
$(X_A, \sigma_A)\underset{\SCOE}{\sim}(X_B,\sigma_B)$.

\medskip

{\bf 3.}
We will finally present 
an example of two irreducible matrices with entries in 
$\{0,1\}$ whose two-sided topological Markov shifts 
are topologically conjugate,
but whose one-sided topological Markov shifts 
are not strongly continuous orbit equivalent.
 Let $A$ and $B$ be the following matrices:
\begin{equation}
A=
\begin{bmatrix}
1 & 1 & 1\\
1 & 1 & 1\\
1 & 0 & 0
\end{bmatrix},
\qquad
B= A^{t}=
\begin{bmatrix}
1 & 1 & 1\\
1 & 1 & 0\\
1 & 1 & 0
\end{bmatrix}.
\end{equation}
They are both irreducible and satisfy condition (I).
Since the row amalgamation of $A$ and the column amalgamation of $B$
are both
$
\begin{bmatrix}
2 & 1 \\
1 & 0
\end{bmatrix},
$
the two-sided topological Markov shifts 
$(\bar{X}_A,\bar{\sigma}_A)$ and $(\bar{X}_B,\bar{\sigma}_B)$
are topologically conjugate (cf. \cite{Kitchens}).
We however know that
${\mathcal{O}}_A\cong {\mathcal{O}}_3$
and
${\mathcal{O}}_B\cong {\mathcal{O}}_3\otimes M_2$
(\cite{EFW}).
Hence their Cuntz--Krieger algebras are not isomorphic
so that
the one-sided topological Markov shifts 
$(X_A,\sigma_A)$ and $(X_B,\sigma_B)$
are not continuously orbit equivalent.

%%%%%%%%%%%%%%%%%%%%%%%%%%%%%%%%%%%%%%%%%%%%%%%%%%%%%%%%
%%%%%%%%%%%%%%%%%%%%%%%%%%%%%

\medskip

{\it Acknowledgments:}
The author would like to thank 
Hiroki Matui for his discussions on this subject.
This work was supported by JSPS KAKENHI Grant Numbers 23540237.

%%%%%%%%%%%%%%%%%%%%%%%%%%%%%%

\end{document}